\newcommand{\Z}{{\mathbb{Z}}}
\newcommand{\R}{{\mathbb{R}}}
\newcommand{\E}{\mathrm{E}}
\newcommand{\HH}{\mathcal{H}}
\renewcommand{\P}{\mathrm{P}}
\renewcommand{\d}{\mathrm{d}}
\newcommand{\<}{\langle}
\renewcommand{\>}{\rangle}
\newcommand{\e}{\mathrm{e}}
\newcommand{\Var}{\text{\rm Var}}
\DeclareMathOperator{\Cov}{\text{\rm Cov}}
\title{Gaussian fluctuation for spatial average of parabolic Anderson model with Neumann/Dirichlet/periodic boundary conditions\thanks{%
	Research supported in part by   FNR grant APOGee (R-AGR-3585-10) at University of Luxembourg.}}
\author{
				Fei Pu\\University of Luxembourg\\\texttt{fei.pu@uni.lu}
	}
\date{\today}
\begin{document}
\newtheorem{stat}{Statement}[section]
\newtheorem{proposition}[stat]{Proposition}
\newtheorem*{prop}{Proposition}
\newtheorem{corollary}[stat]{Corollary}
\newtheorem{theorem}[stat]{Theorem}
\newtheorem{lemma}[stat]{Lemma}
\theoremstyle{definition}
\newtheorem{definition}[stat]{Definition}
\newtheorem*{cremark}{Remark}
\newtheorem{remark}[stat]{Remark}
\newtheorem*{OP}{Open Problem}
\newtheorem{example}[stat]{Example}
\newtheorem{nota}[stat]{Notation}
\numberwithin{equation}{section}

\maketitle

\begin{abstract}
	Consider the
	parabolic Anderson model $\partial_tu=\frac{1}{2}\partial_x^2u+u\, \eta$ on the interval $[0, L]$ with Neumann, Dirichlet or periodic boundary conditions, driven by space-time white noise $\eta$. 
	Using Malliavin-Stein method, we establish the central limit theorem for the fluctuation of the spatial integral $\int_0^Lu(t\,, x)\, \d x$ as $L$ tends to infinity, where the limiting Gaussian distribution is independent of the choice of the boundary conditions and coincides with the Gaussian fluctuation for the spatial average of parabolic Anderson model on the whole space $\R$.
\end{abstract}

\bigskip

\noindent{\it \noindent MSC 2010 subject classification}: 60H15, 60H07, 60F05.
 \smallskip

\noindent{\it Keywords}: Parabolic Anderson model,  central limit theorem, 
	Malliavin calculus, Stein's method, Neumann/Dirichlet/periodic boundary conditions. 
\smallskip
	
\noindent{\it Running head:} CLT for PAM on interval.

{
}

\section{Introduction}

Consider the {\em parabolic Anderson model} on the interval $[0, L]$
\begin{align}\label{PAM}
  \begin{cases}
	\partial_t u(t\,, x) = \tfrac12\partial_x^2 u(t\,, x) + u(t\,, x)\,\eta(t \,, x),\qquad 0<t \leq T, \,\, x \in [0, L] ,\\
	 u(0)\equiv 1,
	\end{cases}
\end{align}
subject to Neumann, Dirichlet or periodic boundary conditions, where $T>0$ is fixed and $\eta$ denotes space-time white noise on 
$[0, T]\times\R$, which is a generalized centered Gaussian process with covariance given by
\begin{align*}
\E[\eta(t\,, x)\eta(s\,, y)]= \delta_0(t-s)\delta_0(x-y), \quad t, s\in  [0, T], \, x, y \in \R.
\end{align*}
Following Walsh\cite{Walsh},  the mild solution to the stochastic PDE \eqref{PAM} satisfies 
the following integral equation:
\begin{align}\label{mild}
	u(t\, , x) = \int_0^LG_{t}(x, y)\, \d y+ \int_0^t\int_0^LG_{t - s}(x, y)u(s\,, y)\,\eta(\d s\,\d y),
\end{align}
where $G_t(x, y)$ denotes the heat kernel on $[0, L]$ with Neumann, Dirichlet or periodic boundary conditions, with expressions given by 
\eqref{heatkernel1}, \eqref{heatkernel2} or \eqref{Pheat}, respectively. Here, we omit the dependence on the parameter $L$ of the solution $u$ and the heat kernel $G$ to simplify the notation. 

In order to study the Gaussian fluctuation of the spatial average of the solution, we introduce
\begin{equation} \label{average}
	\mathcal{S}_{L,t} := \frac 1L \int_0^L \{ u(t\,,x) -\E[u(t\,, x)]\} \,\d x
	\qquad\text{for all $L\geq 1$ and $t\ge0$}.
\end{equation}

The goal of this paper is to prove the following  central limit theorem.

\begin{theorem}\label{th:FCLT}
         Fix $T>0$. 
	Then, in all the three cases of boundary conditions, as $L\to\infty$,
	\begin{equation}\label{FCLT}
		\sqrt{L}\,
		\mathcal{S}_{L,\bullet}		
		\xrightarrow{C[0,T]} \int_0^{\bullet}\sqrt{f(t)}\, \d B_t,
	\end{equation}
	where 
	\begin{align}\label{f}
	f(t)=2\e^{t/4} \int_{-\infty}^{\sqrt{t/2}}\frac{1}{\sqrt{2\pi}}\e^{-\frac{y^2}{2}}\d y, \quad t\geq 0
	\end{align}
	and $B$ denotes a standard one-dimensional Brownian motion,
	and ``$\xrightarrow{C[0,T]}$'' denotes the convergence in law in the 
	space of continuous functions $C[0\,,T]$.
\end{theorem}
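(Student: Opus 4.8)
The plan is to prove the theorem in two stages: first the convergence of the finite-dimensional distributions of $\sqrt{L}\,\mathcal{S}_{L,\bullet}$ to those of the Gaussian martingale $M_t := \int_0^t\sqrt{f(r)}\,\d B_r$, and then tightness in $C[0,T]$. For the first stage I would exploit the fact that, by the mild formulation \eqref{mild}, each coordinate admits an explicit It\^o/Walsh integral representation. Writing $F_L^{(t)} := \sqrt{L}\,\mathcal{S}_{L,t}$ and interchanging the spatial and stochastic integrals,
\begin{equation*}
F_L^{(t)} = \int_0^t\!\!\int_0^L \Phi_{L,t}(s,y)\,\eta(\d s\,\d y),\qquad \Phi_{L,t}(s,y) := \frac{1}{\sqrt L}\left(\int_0^L G_{t-s}(x,y)\,\d x\right) u(s,y).
\end{equation*}
Since $u$ is adapted this is a genuine It\^o/Walsh integral, and I would then apply the multivariate Malliavin--Stein bound (of Peccati--Tudor type) to the vector $(F_L^{(t_1)},\dots,F_L^{(t_m)})$ for fixed $0\le t_1<\dots<t_m\le T$.

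The covariance is computed directly from the Walsh isometry: for $s\le t$,
\begin{equation*}
\E\!\left[F_L^{(s)}F_L^{(t)}\right] = \frac1L\int_0^{s}\!\!\int_0^L \left(\int_0^L G_{s-r}(x,y)\,\d x\right)\!\left(\int_0^L G_{t-r}(x',y)\,\d x'\right)\E\!\left[u(r,y)^2\right]\d y\,\d r.
\end{equation*}
Here is where the three boundary conditions are reconciled. For the Neumann and periodic kernels mass is conserved, so $\int_0^L G_{t-r}(x,y)\,\d x = 1$ identically; for the Dirichlet kernel the integral is strictly below $1$, but the deficit is supported in a boundary layer of width $O(\sqrt{T})$ independent of $L$. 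Dividing by $L$ therefore kills the boundary contribution in every case, and what survives is $\frac1L\int_0^{s}\int_0^L \E[u(r,y)^2]\,\d y\,\d r$. I would show that for $y$ in the bulk $\E[u(r,y)^2]$ converges to the second moment $\E[u_{\R}(r,0)^2]$ of the solution on the whole line --- again the boundary only perturbs an $O(1)$-layer --- and identify this limit with $f(r)$ via the explicit second-moment formula, giving $\E[F_L^{(s)}F_L^{(t)}]\to\int_0^{s}f(r)\,\d r = \E[M_sM_t]$.

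To upgrade the convergence of covariances to Gaussianity it suffices, by the Malliavin--Stein inequality, to show that $\operatorname{Var}\big(\langle DF_L^{(t_i)}, \Phi_{L,t_j}\mathbf 1_{[0,t_j]}\rangle_{\HH}\big)\to 0$ for all $i,j$, where $\HH = L^2([0,T]\times[0,L])$. Differentiating the representation above,
\begin{equation*}
D_{r,z}F_L^{(t)} = \Phi_{L,t}(r,z)\,\mathbf 1_{[0,t]}(r) + \int_r^{t}\!\!\int_0^L \frac{1}{\sqrt L}\left(\int_0^L G_{t-s}(x,y)\,\d x\right) D_{r,z}u(s,y)\,\eta(\d s\,\d y),
\end{equation*}
and I would control the resulting inner product using the two standard ingredients: uniform moment bounds $\sup_{L}\sup_{r\le T,\,z}\E[|u(r,z)|^p]<\infty$, and the heat-kernel domination $\|D_{r,z}u(s,y)\|_p\lesssim G_{s-r}(y,z)$, both obtained from Gronwall/Burkholder arguments. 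The smallness then comes from the prefactor $L^{-1}$ together with space--time integrals of products of heat kernels over $[0,L]$. I expect this to be the main obstacle: the bounds must be uniform in $L$ and must be run simultaneously for the three kernels, and the Dirichlet kernel in particular requires careful use of its image-sum (or eigenfunction) representation to recover the same integral estimates that hold trivially in the mass-conserving cases.

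Finally, for tightness I would establish moment estimates on the temporal increments that are uniform in $L$, of the form $\E\big[|F_L^{(t)}-F_L^{(s)}|^{p}\big]\le C_{p,T}\,|t-s|^{p/2}$ for every $p\ge 2$. The increment splits into a contribution from the fresh noise on $(s,t]$ and a contribution from the difference $G_{t-\cdot}-G_{s-\cdot}$ of the kernels on $[0,s]$; Burkholder--Davis--Gundy together with the uniform moment bounds on $u$ and standard temporal regularity estimates for the interval heat kernels yield the claimed bound. Choosing $p$ large enough that $p/2>1$, Kolmogorov's continuity criterion gives tightness of $\{\sqrt L\,\mathcal{S}_{L,\bullet}\}_L$ in $C[0,T]$. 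Combining tightness with the finite-dimensional convergence established above yields the functional convergence \eqref{FCLT}, with a limit independent of the choice of boundary conditions.
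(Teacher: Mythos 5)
Your architecture is essentially the same as the paper's: represent $\mathcal{S}_{L,t}=\delta(v_{L,t})$ via stochastic Fubini (the paper's \eqref{S=delta(v)}), obtain convergence of finite-dimensional distributions by combining the multivariate Malliavin--Stein bound (Proposition \ref{lemma: NP 6.1.2}) with a variance estimate for $\< D\mathcal{S}_{L,t}\,,v_{L,\tau}\>_{\HH}$ (Proposition \ref{Var<Ds,v>} gives $O(L^{-3})$, which after your normalization is exactly your $\Var\<DF_L^{(t_i)},\Phi_{L,t_j}\>\to0$), and prove tightness from uniform-in-$L$ increment bounds plus Kolmogorov's criterion, with Neumann/periodic trivial because $\mathcal{I}_0\equiv 1$ and Dirichlet handled through the sine expansion (Proposition \ref{pr:tightness}). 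The one genuinely different component is the identification of the limiting covariance: the paper proves $\lim_{L\to\infty}L^{-1}\int_0^L\E[u(t,x)^2]\,\d x=f(t)$ by a term-by-term Wiener chaos computation (Propositions \ref{pr:Cov:asymp2}, \ref{wienerasym}, \ref{secondp}) followed by the series identity \eqref{series}, whereas you propose to show that $\E[u(r,y)^2]\to\E[U(r,0)^2]=f(r)$ for $y$ in the bulk, by comparison with the whole-line solution $U$ of \eqref{PAM2}. That route is conceptually attractive---it explains directly why the limit is the whole-line second moment and is boundary-condition independent---but it is precisely the step you assert rather than argue.

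And this step hides a real obstruction, not just routine detail. Coupling $u$ and $U$ through the same noise and setting $h(t,x)=\|u(t,x)-U(t,x)\|_2^2$, the mild forms give $h(t,x)\lesssim\epsilon(t,x)+\int_0^t\int_0^LG^2_{t-s}(x,y)\,h(s,y)\,\d y\,\d s$, where $\epsilon$ is small only for bulk $x$; the integral couples bulk points to boundary-layer points $y$, where $h$ is $O(1)$ and never becomes small, so a one-step Gronwall argument does not close. You would have to iterate the inequality and show that the iterated kernels assign Gaussian-small weight---uniformly in $L$ and summably in the iteration order---to trajectories that reach the layer, which amounts to redoing, in comparison form, the kernel bookkeeping the paper performs inside its chaos expansion; the paper's remark surrounding \eqref{renew1} makes the same caveat, namely that the renewal-equation characterization of $f$ is usable only once existence of the limit is known. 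Separately, one attribution of difficulty is off: for the Malliavin-derivative bound the delicate case is periodic, not Dirichlet. Neumann/Dirichlet enjoy the uniform-in-$L$ Gaussian bound \eqref{uniformgaussian}, so there $\|D_{r,z}u(s,y)\|_k\lesssim\bm{p}_{s-r}(y-z)$; the periodic kernel admits no such bound (Remark \ref{Dirichlet}), which forces the paper to dominate by $G_{s-r}(y,z)$ itself and to prove a sub-semigroup inequality for $G^2$ (Lemma \ref{subsemi}). Your blanket bound by $G_{s-r}(y,z)$ matches the paper's estimate only in the periodic case and is not what one should aim for in the Dirichlet case. None of this breaks your plan, but the bulk-comparison lemma needs an actual proof before the scheme is complete.
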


It is well known that strong mixing together with  a standard blocking argument can imply a CLT
(see Bradley \cite{Bradley}). However, it is not easy to determine the conditions under which the strong mixing holds in the context of SPDEs. Recently, Chen et al \cite{CKNP_b} have introduced a method to study spatial CLT for a large class of SPDEs, based on Malliavin calculus, Poincar\'e inequalites, compactness arguments and Paul L\'evy's characterization theorem of Brownian motion. This method has been generalized in \cite{CKNP_c} and adapted to study the CLT for infinitely-many interacting diffusion processes.

The proceeding two approaches to CLT require stationarity of the process and unfortunately they  do not apply to our case since the solution $\{u(t\,, x): (t, x)\in [0, T]\times [0, L]\}$ to \eqref{PAM} is clearly not stationary in space when we consider the Neumann/Dirichlet boundary conditions.  In order to prove the CLT in Theorem \ref{th:FCLT}, we will appeal to Malliavin-Stein method, which was introduced by Huang et al  \cite{HNV2018} 
for the one-dimensional stochastic heat equation driven by a  space-time white noise,
and  later widely extended to multidimensional SPDEs driven by Gaussian noise in  
\cite{HNVZ2019, DNZ2018, GNZ20, NZ2020, CKNP_d, KNP20}.  This approach to CLT provides a  
convergence rate in terms of  total variation distance, using a combination of Malliavin calculus and 
Stein's method for normal approximations  (see Nourdin and Peccati \cite{NP09,NP}).
Also, as we will see,  the Malliavin-Stein approach to CLT applies in our non-stationary setting.

Recall that  the total variation distance between two random variables $X$ and $Y$ is defined as
\begin{align}\label{tvd:def}
	d_{\rm TV}  (X\,,Y)= \sup_{B\in \mathcal{B}(\R)} | \P(X\in B)- \P(Y\in B)|,
\end{align}
where  $\mathcal{B}(\R)$ denotes the family of all Borel subsets of $\R$. 
We abuse  notation and let $d_{\rm TV}(F\,,{\rm N}(0\,,1))$  denote the 
total variation distance between the law of $F$ and the ${\rm N}(0\,,1)$ law.

 In  the following theorem, we derive the convergence rate for the total variation distance between the normalization of 
$\mathcal{S}_{L,t}$ and standard normal distribution ${\rm N}(0\,,1)$.
\begin{theorem}\label{TVD}
	For every $t>0$ there exists
	a real number $c = c(t) >0$ 
	such that for all $L \geq  1$,
	\begin{align}\label{TVDeq}
		d_{\rm TV} \left(  \frac{ \mathcal{S}_{L,t}}
		{ \sqrt{{\rm Var}(\mathcal{S}_{L,t})}} ~,~
		{\rm N}(0\,,1)\right) \leq  \frac{c }{\sqrt L}.
	\end{align}
\end{theorem}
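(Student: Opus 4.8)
The plan is to apply the Malliavin--Stein method, so I first put the random variable in a form adapted to it. Writing $F_L := \mathcal{S}_{L,t}$ and using the mild formulation \eqref{mild} together with $\E[u(t,x)] = \int_0^L G_t(x,y)\,\d y$, one has
\[
u(t,x) - \E[u(t,x)] = \int_0^t\!\!\int_0^L G_{t-s}(x,y)\,u(s,y)\,\eta(\d s\,\d y),
\]
so that, after interchanging the $\d x$ and stochastic integrals, $F_L = \delta(v_L)$ is the Skorohod (here Walsh) integral of
\[
v_L(s,y) = \frac{u(s,y)}{L}\int_0^L G_{t-s}(x,y)\,\d x, \qquad (s,y)\in[0,t]\times[0,L].
\]
Let $\HH = L^2([0,t]\times[0,L])$ and let $D$ be the Malliavin derivative. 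The engine of the proof is the standard total-variation bound for divergences (see \cite{HNV2018,NP}): since $\E[F_L]=0$ and $\sigma_L^2 := \Var(F_L) = \E[\langle DF_L, v_L\rangle_\HH]$ by the duality formula,
\[
d_{\rm TV}\!\left(\frac{F_L}{\sigma_L}\,,\, {\rm N}(0,1)\right) \le \frac{2}{\sigma_L^2}\sqrt{\Var(\langle DF_L, v_L\rangle_\HH)}.
\]
Thus \eqref{TVDeq} reduces to two estimates, both uniform in $L\ge 1$ and in the three boundary conditions: a lower bound $\sigma_L^2 \ge c_1/L$ and an upper bound $\Var(\langle DF_L, v_L\rangle_\HH)\le c_2/L^{3}$; their combination produces the rate $c/\sqrt L$.

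For the ingredients I would first establish $L^p(\Omega)$-moment bounds, uniform in the space variable and in $L$, for $u(s,y)$, for its Malliavin derivative $D_{s',y'}u(t,x)$, and for the second derivative $D_{s',y'}D_{s'',y''}u(t,x)$. These follow from \eqref{mild}, from the identity $D_{s',y'}u(t,x) = G_{t-s'}(x,y')u(s',y') + \int_{s'}^t\!\int_0^L G_{t-r}(x,z)D_{s',y'}u(r,z)\,\eta(\d r\,\d z)$, and from Burkholder--Davis--Gundy, Minkowski and a Gronwall argument, provided the requisite heat-kernel integrals are controlled uniformly in $L$. The lower bound on $\sigma_L^2$ is then read off from the Walsh isometry,
\[
\sigma_L^2 = \frac{1}{L^2}\int_0^t\!\!\int_0^L \left(\int_0^L G_{t-s}(x,y)\,\d x\right)^{2}\! \E[u(s,y)^2]\,\d s\,\d y.
\]
For Neumann and periodic conditions $\int_0^L G_{t-s}(x,y)\,\d x \equiv 1$, while for Dirichlet it equals $1$ off an $O(1)$-neighbourhood of the endpoints; since $\E[u(s,y)^2]\ge (\E[u(s,y)])^2$ is bounded below by a positive constant on the bulk, the $\d y$-integral is $\gtrsim L$, whence $\sigma_L^2 \gtrsim 1/L$ in all three cases.

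The heart of the matter is the bound $\Var(\langle DF_L, v_L\rangle_\HH)\le c_2/L^{3}$. I would control it through the Poincar\'e inequality $\Var(G)\le \E[\|DG\|_\HH^2]$ applied to $G = \langle DF_L, v_L\rangle_\HH$, together with the product rule
\[
D_{r,w}\langle DF_L, v_L\rangle_\HH = \langle D_{r,w}DF_L, v_L\rangle_\HH + \langle DF_L, D_{r,w}v_L\rangle_\HH,
\]
so that it remains to integrate the $L^2(\Omega)$-norms of the two terms over $(r,w)\in[0,t]\times[0,L]$. Each resulting expression is a multiple spatial integral against products of heat kernels and of first/second Malliavin derivatives of $u$; the uniform moment bounds above reduce the task to purely deterministic heat-kernel integrals. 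The extra factor $1/L$ beyond the naive $\sigma_L^4 = O(L^{-2})$ comes from the short-range correlation encoded in the kernels: the decisive deterministic input is that $\int_0^L G_{t-s}(x,y)G_{t-s}(x',y)\,\d y$ decays like a Gaussian in $|x-x'|$, so the spatial integrations produce only one factor of $L$ (the diagonal volume) rather than $L^2$. Carrying along the two prefactors $1/L$ coming from $F_L$ and $v_L$ then yields the claimed $O(L^{-3})$.

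I expect this last variance estimate to be the main obstacle, for two reasons. First, it requires $L^p$-moment bounds for the second Malliavin derivative of $u$ and the associated heat-kernel integrals, all uniform in $L$; establishing them simultaneously for the Neumann, Dirichlet and periodic kernels is precisely where the boundary conditions must be treated with care, since only the Neumann/periodic kernels conserve mass exactly. Second, one must bookkeep the powers of $L$ sharply: a crude Cauchy--Schwarz loses the critical factor $1/L$ and only gives $O(L^{-2})$, which is insufficient, so the extra $1/L$ must be extracted by exploiting the Gaussian decay of the kernel correlations rather than by bounding kernels through their sup norms. Once these deterministic estimates are in hand, the rate $c/\sqrt L$ follows by combining the displayed lower and upper bounds.
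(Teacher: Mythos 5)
Your proposal is correct in outline and shares the paper's skeleton --- the representation $\mathcal{S}_{L,t}=\delta(v_{L,t})$ as in \eqref{S=delta(v)}--\eqref{g+v}, the Malliavin--Stein bound of Proposition \ref{pr:MS}, a variance lower bound of order $1/L$, and the key estimate $\Var\left(\langle D\mathcal{S}_{L,t},v_{L,t}\rangle_{\HH}\right)\lesssim L^{-3}$ --- but you execute both ingredients by genuinely different means. For the lower bound, the paper invokes the exact asymptotics $L\,\Var(\mathcal{S}_{L,t})\to\int_0^t f(s)\,\d s$ (Proposition \ref{pr:Cov:asymp}, proved via Wiener chaos expansions in Section \ref{asym:cov}), whereas you read off $\Var(\mathcal{S}_{L,t})\gtrsim 1/L$ directly from the Walsh isometry using $\E[u(s,y)^2]\ge(\E[u(s,y)])^2=\mathcal{I}_0(s,y)^2$ and a lower bound on $\mathcal{I}_0$ in the bulk of $[0,L]$; this is more elementary and entirely sufficient for Theorem \ref{TVD} (the paper needs the sharp asymptotics anyway for Theorem \ref{th:FCLT}, to identify $f$; also, for Dirichlet, $\mathcal{I}_0$ is close to, not equal to, $1$ away from the endpoints, but your bulk argument goes through). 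For the crucial $L^{-3}$ estimate, the paper never differentiates $\langle D\mathcal{S}_{L,t},v_{L,\tau}\rangle_{\HH}$: it splits it as $\langle v_{L,t},v_{L,\tau}\rangle_{\HH}$ plus a stochastic integral (see \eqref{DS:v:X:Y}), bounding the variance of the first term by the covariance Poincar\'e inequality \eqref{Poincare:Cov} combined with the chain rule, and that of the second by It\^o's isometry, so that only the \emph{first}-derivative bounds of Lemma \ref{derivative:estimate} are ever needed. You instead apply the Poincar\'e inequality (which is \eqref{Poincare:Cov} with $F=G$) directly to $G=\langle DF_L,v_L\rangle_{\HH}$ together with the product rule, which forces you to control the second Malliavin derivative $D_{r,w}D_{s,y}u$ in $L^k(\Omega)$, uniformly in $L\ge1$ and in all three boundary conditions --- an extra lemma the paper deliberately avoids. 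That lemma is provable by the same Picard-iteration/Gronwall induction as Lemma \ref{derivative:estimate}, yielding bounds of the form $C\,\bm{p}_{s'-r}(y'-w)\,\bm{p}_{r-s}(w-y)$ in the Neumann/Dirichlet case and the analogue with $G$ in the periodic case, but it is a genuine additional piece of work that your plan assumes rather than proves. Your bookkeeping warnings are exactly right: crude Cauchy--Schwarz on $\langle DF_L,\cdot\rangle_{\HH}$ yields only $O(L^{-2})$, and one must keep the kernels pointwise so that spatial integrations contribute $O(1)$ rather than $O(L)$; with that done, your route also delivers the rate $c/\sqrt{L}$, at the cost of one more uniform-in-$L$ derivative estimate, which is precisely what the paper's first-order decomposition buys you out of.
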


\begin{remark}
The function $f(t)$ in \eqref{f} is equal to the second moment of solution (at time $t$)  to parabolic Anderson model on $\R$ driven by space-time white noise with constant initial condition.  Indeed, let $\{U(t\,,x ): (t, x)\in \R_+\times \R\}$ solve 
\begin{align}\label{PAM2}
  \begin{cases}
	\partial_t U(t\,, x) = \tfrac12\partial_x^2 U(t\,, x) + U(t\,, x)\,\eta(t \,, x),\qquad  t>0, \,\, x \in \R ,\\
	 U(0)\equiv 1.
	\end{cases}
\end{align}
Then according to \cite[(2.28) and (2.18)]{CD15}, for all $t\geq 0$ and $x\in \R$, 
\begin{align}\label{secondm}
\E[U(t\,, x)^2]=2\e^{t/4} \int_{-\infty}^{\sqrt{t/2}}\frac{1}{\sqrt{2\pi}}\e^{-\frac{y^2}{2}}\d y= f(t).
\end{align}
\end{remark}

\begin{remark}
          The limiting Gaussian process in \eqref{FCLT} coincides with the Gaussian fluctuation of the spatial average of $U$ that solves \eqref{PAM2}.  In fact, as a special case of Huang et al \cite[Theorem 1.2]{HNV2018},  they have proved that as $R\to\infty$,
          \begin{align*}
          \frac{1}{\sqrt{R}}\int_0^R[U(\bullet\,, x) -1] \, \d x	\xrightarrow{C[0,T]} \int_0^{\bullet}\sqrt{\E[U(t\,, 0)^2]}\, \d B_t= 	 \int_0^{\bullet}\sqrt{f(t)}\, \d B_t;
          \end{align*}
  see \eqref{secondm} for the identity.
\end{remark}

We will prove Theorems \ref{th:FCLT} and \ref{TVD} in Section \ref{Sec:TVD} based on the Malliavin-Stein method; see Propositions \ref{pr:MS} and \ref{lemma: NP 6.1.2}.  Here, we point out that unlike the cases considered in the literature mentioned above, in our situation the solution to \eqref{PAM} depends on the length of the interval $L$. We need control the moments of the solution as well as its Malliavin derivative uniformly as $L\to\infty$; see Lemmas \ref{prop:um} and \ref{derivative:estimate} in Section \ref{sec:p}.  Section \ref{asym:cov} is devoted to 
the asymptotic behavior of the covariance as $L\to \infty$, which leads to the expression of the limit Gaussian process in \eqref{FCLT} and the formula of the function $f$ in \eqref{f}. 
And the last section is an Appendix that contains a few technical
lemmas on the heat kernel that are used throughout the paper.

 We write $\|Z\|_k$ instead of
$(\E[|Z|^k])^{1/k}$, for every $Z\in L^k(\Omega)$.

\section{Preliminaries}\label{sec:p}

\subsection{Clark-Ocone formula}
Let $\mathcal{H}= L^2([0, T] \times \R)$.
The Gaussian family $\{ W(h)\}_{h \in \mathcal{H}}$ formed by the Wiener integrals
\[
	W(h)= \int_{[0, T]\times \R}  h(s\,,x)\, \eta(\d s\, \d x)
\]
defines an  {\it isonormal Gaussian process} on the Hilbert space $\mathcal{H}$.
In this framework we can develop the Malliavin calculus (see Nualart \cite{Nualart}).
We denote by $D$ the derivative operator.
Let $\{\mathcal{F}_s\}_{s \geq 0}$ denote the filtration generated by 
the space-time white noise $\eta$. 

We recall the following Clark-Ocone formula  (see Chen et al \cite[Proposition 6.3]{CKNP}):
\[
	F= \E [F]  + \int_{[0, T]\times \R}
	\E\left[D_{s,y} F \mid \mathcal{F}_s\right] \eta(\d s \, \d z)
	\qquad\text{a.s.},
\]
valid for every random variable $F$ in the Gaussian Sobolev space $\mathbb{D}^{1,2}$. 
Thanks to Jensen's inequality for conditional expectations, the above Clark-Ocone formula 
readily yields the following Poincar\'e-type inequality, which plays an important
role throughout the paper:
\begin{equation} \label{Poincare:Cov}
	|\Cov(F\,, G)| \le \int_0^T\d s \int_\R\d z\
	\left\| D_{s,z } F  \right\|_2
	\left\| D_{s,z}G \right\|_2
	\qquad\text{for all $F,G\in\mathbb{D}^{1,2}$.}		
\end{equation}

\subsection{The Malliavin-Stein method}

Recall the total variation distance between two random variables defined in \eqref{tvd:def}.
The following bound on $d_{\rm TV}(F\,,{\rm N}(0\,,1))$
follows from a suitable combination of ideas from the Malliavin calculus and
Stein's method for normal approximations; see Nualart and Nualart \cite[Theorem 8.2.1]{NN}.

\begin{proposition}\label{pr:MS}
	Suppose that $F\in \mathbb{D}^{1,2}$ satisfies $\E(F^2)=1$ and  $F=\delta(v)$ 
	for some $v$ in the $L^2(\Omega)$-domain  of the divergence operator $\delta$. 
	Then, 
	\[
		d_{\rm TV} (F\,,  {\rm N}(0\,,1)) \le  
		2\sqrt{ {\rm Var} \left ( \langle DF\,, v \rangle_{\mathcal{H}} \right) }.
	\]
\end{proposition}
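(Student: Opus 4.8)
The plan is to combine Stein's method for normal approximation with the Malliavin integration-by-parts formula, following the classical route of Nourdin and Peccati. First I would invoke Stein's characterization of the standard Gaussian: for any Borel set $B$, the Stein equation $f'(x)-xf(x)=\mathbf{1}_B(x)-\P({\rm N}(0\,,1)\in B)$ admits a bounded solution $f_B$ whose derivative satisfies $\|f_B'\|_\infty\le 2$. Evaluating at $F$ and taking expectations converts the definition \eqref{tvd:def} into
\[
	d_{\rm TV}(F\,,{\rm N}(0\,,1))=\sup_{B\in\mathcal{B}(\R)}\bigl|\E[f_B'(F)-Ff_B(F)]\bigr|\le \sup_{\|f'\|_\infty\le 2}\bigl|\E[f'(F)-Ff(F)]\bigr|.
\]

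Next I would exploit the hypothesis $F=\delta(v)$ together with the duality between $\delta$ and $D$ and the chain rule $D(f(F))=f'(F)\,DF$ to rewrite the ``$Ff(F)$'' term: for smooth $f$,
\[
	\E[Ff(F)]=\E[\delta(v)\,f(F)]=\E\bigl[\langle v, D(f(F))\rangle_{\mathcal{H}}\bigr]=\E\bigl[f'(F)\,\langle DF, v\rangle_{\mathcal{H}}\bigr].
\]
Substituting this back gives $\E[f'(F)-Ff(F)]=\E[f'(F)(1-\langle DF, v\rangle_{\mathcal{H}})]$, so using $\|f'\|_\infty\le 2$ followed by the Cauchy--Schwarz inequality,
\[
	d_{\rm TV}(F\,,{\rm N}(0\,,1))\le 2\,\E\bigl|1-\langle DF, v\rangle_{\mathcal{H}}\bigr|\le 2\sqrt{\E\bigl[(1-\langle DF, v\rangle_{\mathcal{H}})^2\bigr]}.
\]
To identify the right-hand side with the variance, I would observe that the same duality formula applied with $f=\mathrm{id}$, combined with the normalization $\E[F^2]=1$, yields $\E[\langle DF, v\rangle_{\mathcal{H}}]=\E[\delta(v)\,F]=\E[F^2]=1$. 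Hence $1-\langle DF, v\rangle_{\mathcal{H}}=\E[\langle DF, v\rangle_{\mathcal{H}}]-\langle DF, v\rangle_{\mathcal{H}}$ is centered, and its second moment equals precisely $\Var(\langle DF, v\rangle_{\mathcal{H}})$, which is the claimed bound.

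The main technical obstacle is justifying the integration-by-parts step for the Stein solution $f_B$, which is merely Lipschitz rather than $C^1$ and therefore does not a priori permit the chain rule to be applied to $f_B(F)$. The standard remedy is to approximate $f_B$ by smooth functions with uniformly bounded first derivatives and to pass to the limit, relying on $F\in\mathbb{D}^{1,2}$ and $v$ in the $L^2(\Omega)$-domain of $\delta$ to control the relevant $L^2(\Omega)$ norms; these are exactly the regularity hypotheses that make both the duality relation and the chain rule legitimate. All the remaining steps are elementary inequalities, so the argument reduces to assembling these three ingredients in the order above.
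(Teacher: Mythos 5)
Your proposal is correct, and it is precisely the standard Malliavin--Stein argument: the paper itself gives no proof of this proposition but simply cites Nualart and Nualart \cite[Theorem 8.2.1]{NN}, and your chain of steps (Stein equation with $\|f_B'\|_\infty\le 2$, the duality $\E[\delta(v)f(F)]=\E[f'(F)\langle DF,v\rangle_{\mathcal H}]$, Cauchy--Schwarz, and the centering identity $\E[\langle DF,v\rangle_{\mathcal H}]=\E[F\delta(v)]=\E[F^2]=1$ turning the second moment into the variance) is exactly how that cited theorem is proved. Your handling of the regularity issue --- smoothing the Stein solution and using $F\in\mathbb{D}^{1,2}$, $v\in\mathrm{Dom}\,[\delta]$ to pass to the limit --- is also the standard and correct remedy, so the proposal faithfully fills in the proof the paper outsources to the reference.
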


The proof of Theorem \ref{th:FCLT}  is based on
the following  generalization of  a result of Nourdin and Peccati \cite[Theorem 6.1.2]{NP}. 

\begin{proposition}\label{lemma: NP 6.1.2}
	Let $F=( F^{(1)}, \dots, F^{(m)})$ be a random vector such that, for every $i=1,\ldots,m$,
	$F^{(i)} = \delta (v^{(i)})$ for some $v^{(i)} \in {\rm Dom}\, [\delta]$.
	Assume additionally that
	$F^{(i)} \in \mathbb{D}^{1,2}$ for $i=1,\dots,m$. Let $G$ be a centered
	$m$-dimensional Gaussian random vector with covariance matrix $(C_{i,j}) _{1\le i,j \le m} $. 
	Then, for every $h\in C^2(\R^m)$ that has bounded second partial derivatives, 
	\[
		| \E( h(F)) -\E (h(G)) | \le \tfrac 12 \|h ''\|_\infty 
		\sqrt{   \sum_{i,j=1}^m   \E \left( \left|
		C_{i,j} - \langle DF^{(i)}\,, v^{(j)} \rangle_{\HH} \right|^2
		\right)},
	\]
	where 
	\[
		\|h'' \| _\infty := \adjustlimits\max_{1\le i,j \le m} 
		\sup_{x\in\R^m}  \left| \frac { \partial ^2h (x) } {\partial x_i \partial x_j} \right|.
	\]
\end{proposition}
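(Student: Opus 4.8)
The plan is to adapt the classical Gaussian interpolation (the ``smart path'' method behind \cite[Theorem 6.1.2]{NP}) to the Malliavin framework, replacing the Gaussian integration-by-parts for the $F$-part by the duality between the divergence $\delta$ and the derivative $D$. First I would realize the target Gaussian vector $G$ on an auxiliary isonormal Gaussian process that is independent of the one driving $\eta$, so that $G$ is independent of $F$ and the Malliavin derivative $D$ associated with $\eta$ annihilates each component $G^{(i)}$. For $t\in[0,1]$ set $Z_t=\sqrt{1-t}\,F+\sqrt{t}\,G$ and $\Psi(t)=\E[h(Z_t)]$, so that $\E(h(F))-\E(h(G))=\Psi(0)-\Psi(1)=-\int_0^1\Psi'(t)\,\d t$. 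Since $h\in C^2(\R^m)$ has bounded second derivatives, $\partial_i h$ is Lipschitz, all relevant moments of $F$ and $G$ are finite, and differentiation under the expectation is justified, yielding
\[
	\Psi'(t)=\frac12\sum_{i=1}^m\E\!\left[\partial_i h(Z_t)\Big(\tfrac{1}{\sqrt t}\,G^{(i)}-\tfrac{1}{\sqrt{1-t}}\,F^{(i)}\Big)\right].
\]

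The two terms are then treated by two integration-by-parts identities. For the Gaussian part, conditioning on $F$ and applying the Stein identity for the covariance matrix $C$ gives $\E[\partial_i h(Z_t)\,G^{(i)}]=\sqrt t\sum_j C_{i,j}\,\E[\partial_i\partial_j h(Z_t)]$, because $\partial_{G^{(j)}}\partial_i h(Z_t)=\sqrt t\,\partial_i\partial_j h(Z_t)$. For the Malliavin part, since $F^{(i)}=\delta(v^{(i)})$ and $\partial_i h(Z_t)\in\mathbb{D}^{1,2}$ (by the chain rule for Lipschitz functions, using that $D$ does not act on $G$), the duality between $\delta$ and $D$ gives
\[
	\E[\partial_i h(Z_t)\,\delta(v^{(i)})]=\E\langle D(\partial_i h(Z_t)),v^{(i)}\rangle_{\HH}=\sqrt{1-t}\sum_j\E\!\left[\partial_i\partial_j h(Z_t)\,\langle DF^{(j)},v^{(i)}\rangle_{\HH}\right].
\]
Substituting both identities, using the symmetry of $C$ and of the Hessian, and relabeling the summation indices collapses the expression to
\[
	\Psi'(t)=\frac12\sum_{i,j=1}^m\E\!\left[\partial_i\partial_j h(Z_t)\big(C_{i,j}-\langle DF^{(i)},v^{(j)}\rangle_{\HH}\big)\right].
\]

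To conclude, I would bound each Hessian entry by $\|h''\|_\infty$, view the sum as a Frobenius pairing of the Hessian with the matrix $\big(C_{i,j}-\langle DF^{(i)},v^{(j)}\rangle_{\HH}\big)_{i,j}$, apply the Cauchy–Schwarz inequality (first in $(i,j)$, then in $\Omega$), and integrate over $t\in[0,1]$; this produces a bound of the asserted form in terms of $\sqrt{\sum_{i,j}\E|C_{i,j}-\langle DF^{(i)},v^{(j)}\rangle_{\HH}|^2}$.

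I expect the main obstacle to be the rigorous justification of the two integration-by-parts steps rather than the algebra. Concretely, one must (i) set up the enlarged Gaussian space so that $G$ is genuinely independent of $\eta$ and $DG^{(i)}=0$, (ii) verify $\partial_i h(Z_t)\in\mathbb{D}^{1,2}$ through the Lipschitz chain rule and check the integrability needed for the duality $\E[\Phi\,\delta(v)]=\E\langle D\Phi,v\rangle_{\HH}$ to apply with $\Phi=\partial_i h(Z_t)$, and (iii) legitimate differentiating $\Psi$ under the expectation. When $h$ is not regular enough to carry out these steps directly, the standard remedy is to first establish the estimate for $h\in C_c^\infty(\R^m)$ and then extend to general $h\in C^2(\R^m)$ with bounded second derivatives by a mollification and approximation argument.
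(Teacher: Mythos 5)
The paper gives no proof of Proposition \ref{lemma: NP 6.1.2} at all: it is quoted as a generalization of \cite[Theorem 6.1.2]{NP}, and the proof of that generalization in the literature (it appears in \cite{HNV2018}) is precisely your Gaussian interpolation. Your steps are the standard and correct ones: realizing $G$ on an independent Gaussian space so that $DG^{(i)}=0$, differentiating $\Psi(t)=\E[h(\sqrt{1-t}\,F+\sqrt{t}\,G)]$, Stein's identity for the $G$-term, the duality $\E[\Phi\,\delta(v^{(i)})]=\E\langle D\Phi,v^{(i)}\rangle_{\HH}$ together with the $C^1$ chain rule for the $F$-term (note $h\in C^2$, so the plain chain rule suffices; no Lipschitz version is needed), and symmetrization, which correctly give
\begin{equation*}
\Psi'(t)=\frac12\sum_{i,j=1}^m\E\Bigl[\partial_i\partial_j h(Z_t)\bigl(C_{i,j}-\langle DF^{(i)},v^{(j)}\rangle_{\HH}\bigr)\Bigr].
\end{equation*}

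There is, however, a genuine gap in your final step, and it is worth being precise because it also concerns the statement as printed. Writing $X_{i,j}:=C_{i,j}-\langle DF^{(i)},v^{(j)}\rangle_{\HH}$, the Frobenius/Cauchy--Schwarz bound you describe gives $|\Psi'(t)|\le\frac12\,\E\bigl[(\sum_{i,j}|\partial_i\partial_j h(Z_t)|^2)^{1/2}(\sum_{i,j}X_{i,j}^2)^{1/2}\bigr]$, and with the max-entry norm $\|h''\|_\infty$ of the statement one only has $(\sum_{i,j}|\partial_i\partial_j h|^2)^{1/2}\le m\|h''\|_\infty$; so your argument yields the constant $\tfrac m2$, not $\tfrac12$. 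This loss of $m$ cannot be argued away, because the inequality with $\tfrac12$ and the max-entry norm is false in general: take $F^{(1)}=\cdots=F^{(m)}=\delta(v)$ for a fixed centered non-Gaussian $\delta(v)\in\mathbb{D}^{1,2}$ (e.g.\ a second-chaos element), $C_{i,j}\equiv\Var(\delta(v))$, and $h(x)=\tilde h\bigl(m^{-1}(x_1+\cdots+x_m)\bigr)$ with $\tilde h\in C^2$ bounded second derivative; then the left-hand side equals $|\E[\tilde h(\delta(v))]-\E[\tilde h({\rm N}(0,\Var(\delta(v))))]|$ independently of $m$, while the right-hand side is $\tfrac12 m^{-2}\|\tilde h''\|_\infty\cdot m\sqrt{\E[X_{1,1}^2]}=O(1/m)$, so letting $m\to\infty$ would force $\delta(v)$ to be Gaussian. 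The correct form of the proposition carries the factor $\tfrac m2$ (equivalently, replaces $\|h''\|_\infty$ by $\sup_x(\sum_{i,j}|\partial_i\partial_j h(x)|^2)^{1/2}$), and that is all the paper ever uses: in the proof of Theorem \ref{th:FCLT} the dimension $m$ is fixed and only the decay of $\Var\langle D\mathcal{S}_{L,t_i},v_{L,t_j}\rangle_{\HH}$ as $L\to\infty$ matters. So your proof is the right one; you should simply not claim that Cauchy--Schwarz returns ``the asserted form,'' but state and prove the bound with $\tfrac m2$.
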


\subsection{Moments and Malliavin derivative of \texorpdfstring{$u(t,x)$}{u(t,x)}}

In this section, we will give some upper bounds on the moments and Malliavin derivative of $u(t, x)$,  uniformly for $L\geq 1$. We first remark that mild form in \eqref{mild} can be understood as 
\begin{align}\label{mild2}
	u(t\, , x) = \int_{\R} \bm{1}_{[0, L]}(y)G_{t}(x, y) \d y+ \int_0^t\int_{\R} \bm{1}_{[0, L]}(y)G_{t - s}(x, y)u(s\,, y)\,\eta(\d s\,\d y). 
\end{align}
This means that for every $L\geq 1$, the solution $\{u(t\,, x): (t, x)\in [0, T] \times [0, L]\}$ can be viewed as a function of the space-time white noise $\eta$ on $[0, T] \times \R$. In what follows, we will alway write the spatial integral $\int_0^L$ instead of $\int_{\R} \bm{1}_{[0, L]}$, as it is clear in the context.

We now define the Picard iteration for the solution to \eqref{mild}.
 Let $u_0(t \,, x) =  \int_{0}^LG_{t}(x, y) \d y$ 
	for every $(t, x)\in (0, T] \times [0, L]$ and $u_0(0, x) = 1$ for all $x\in  [0, L]$. Define iteratively, for every $n\in\mathbb{Z}_+$,
	\begin{equation}\label{dn1}
		u_{n+1}(t\,,x) := u_0(t\\, x) +
		\int_{0}^t\int_0^LG_{t-r}(x, z)u_n(r\,,z) \,\eta(\d r\,\d z).
	\end{equation}

\begin{lemma}\label{prop:um}
           Let $\{u(t\,, x): (t, x)\in [0, T]\times [0, L]\}$ be the solution to \eqref{PAM} and $\{u_n\}_{n=0}^{\infty}$ 
           be defined in  \eqref{dn1}. Then for all $k\geq 2$, 
           \begin{align}\label{eq:um1}
           c_{T, k}:= \sup_{n\geq 0}\sup_{L\geq 1}\sup_{(t, x)\in [0, T]\times [0, L]}\|u_n(t\,, x)\|_k <\infty
                      \end{align}
           and
                   \begin{align}\label{eq:um2}
         \sup_{L\geq 1}\sup_{(t, x)\in [0, T]\times [0, L]}\|u(t\,, x)\|_k\leq c_{T, k} <\infty.
           \end{align}
\end{lemma}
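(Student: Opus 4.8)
The plan is to establish \eqref{eq:um1} first, by a Picard-iteration argument built on the Burkholder--Davis--Gundy (BDG) inequality for Walsh integrals, and then to deduce \eqref{eq:um2} by passing to the limit $n\to\infty$. The entire argument is driven by a single uniform estimate on the $L^2$-norm of the heat kernel, which I would isolate as an Appendix lemma: there is a constant $C=C(T)$ such that
\[
	\sup_{L\geq1}\ \sup_{x\in[0,L]}\int_0^L G_s(x,z)^2\,\d z\ \leq\ \frac{C}{\sqrt s}, \qquad 0<s\leq T,
\]
valid in all three boundary cases. This is where the uniformity in $L$ enters: writing the interval heat kernel through the method of images (or as a theta series in the periodic case), its squared spatial integral is dominated by that of the whole-line Gaussian, namely $(4\pi s)^{-1/2}$, uniformly in $L\geq1$ and $x$.

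Granting this, fix $k\geq2$ and set $H_n(t):=\sup_{L\geq1}\sup_{x\in[0,L]}\|u_n(t\,,x)\|_k^2$. For the inhomogeneous term I would note that $u_0(t\,,x)=\int_0^L G_t(x,y)\,\d y$ is deterministic and satisfies $0\leq u_0\leq1$ uniformly: it equals $1$ for Neumann and periodic conditions, and is $\leq1$ by the sub-Markovian property in the Dirichlet case, so $H_0(t)\leq1$. Applying BDG to the stochastic integral in \eqref{dn1}, together with $(a+b)^2\leq2a^2+2b^2$, Minkowski's inequality, and the heat-kernel estimate above, yields
\[
	\|u_{n+1}(t\,,x)\|_k^2\ \leq\ 2+2z_k^2\int_0^t\!\!\int_0^L G_{t-r}(x,z)^2\,\|u_n(r\,,z)\|_k^2\,\d z\,\d r\ \leq\ 2+2z_k^2C\int_0^t (t-r)^{-1/2}H_n(r)\,\d r,
\]
where $z_k$ is the BDG constant. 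Taking the supremum over $L$ and $x$ gives the scalar recursion $H_{n+1}(t)\leq a+b\int_0^t(t-r)^{-1/2}H_n(r)\,\d r$ with $a=2$ and $b=2z_k^2C$, both independent of $L$, $n$ and $(t,x)$.

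The recursion involves convolution against the weakly singular but integrable kernel $(t-r)^{-1/2}$, so I would close it with a fractional Gronwall argument. Let $g$ be the (finite, continuous on $[0,T]$) solution of the linear renewal equation $g(t)=a+b\int_0^t(t-r)^{-1/2}g(r)\,\d r$, whose Picard expansion is the Mittag-Leffler-type series $a\sum_{j\geq0}(b\,\Gamma(1/2))^j\,t^{j/2}/\Gamma(j/2+1)$; this converges for every $t$ because $\Gamma(j/2+1)$ grows faster than exponentially. Since $H_0\leq1\leq a\leq g$, an immediate induction gives $H_n(t)\leq g(t)$ for all $n$, whence $c_{T,k}^2:=\sup_n\sup_{t\leq T}H_n(t)\leq\sup_{t\leq T}g(t)<\infty$, which is \eqref{eq:um1}. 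Finally, the standard Walsh theory shows $u_n(t\,,x)\to u(t\,,x)$ in $L^k(\Omega)$, so Fatou's lemma (lower semicontinuity of the $L^k$-norm) gives $\|u(t\,,x)\|_k\leq\liminf_n\|u_n(t\,,x)\|_k\leq c_{T,k}$, which is \eqref{eq:um2}.

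The main obstacle is the uniform-in-$L$ heat-kernel estimate: every constant in the Gronwall scheme must be independent of $L$, and this hinges entirely on showing that the $L^2$-mass of $G_s(x,\cdot)$ on $[0,L]$ does not blow up as $L\to\infty$ for all three boundary conditions simultaneously. Once that estimate is in hand, via the image and theta-series representations collected in the Appendix, the remainder is routine, if careful, BDG-plus-Gronwall bookkeeping that is uniform in $n$, $L$, and $(t,x)$.
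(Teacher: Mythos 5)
Your proof is correct, and its skeleton is the same as the paper's: Picard iteration, Burkholder plus Minkowski, a heat-kernel estimate uniform in $L\ge 1$, a Gronwall argument against the singular kernel $s^{-1/2}$, and finally $L^k(\Omega)$-convergence of the iterates to transfer the bound to $u$. The genuine difference lies in how the three boundary conditions are handled. The paper splits into cases: for Neumann/Dirichlet it uses the pointwise Gaussian domination $G_t(x,y)\le K_T\,\bm{p}_t(x-y)$ of \eqref{uniformgaussian}, while for periodic conditions --- where that pointwise bound is false, see Remark \ref{Dirichlet}(2) --- it verifies only the integrability $\int_0^t \sum_{j\in\Z}\bm{p}_r(j)\,\d r<\infty$ of \eqref{integrable}, which is all that Dalang's Lemma 15 \cite{Dalang1999} requires. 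You instead unify the three cases under the single estimate $\sup_{L\geq 1}\sup_{x}\int_0^L G_s(x,z)^2\,\d z\le C_T\, s^{-1/2}$. That estimate is true, but since it is the crux of the whole argument you should prove it carefully and note why it does not contradict Remark \ref{Dirichlet}(2): by the semigroup property \eqref{semigroup} and symmetry, the quantity equals the \emph{on-diagonal} value $G_{2s}(x,x)=\sum_{n\in\Z}\bm{p}_{2s}(nL)$ in the periodic case, and for $L\ge 1$, $0<s\le T$ the terms with $n\neq 0$ contribute at most $\bm{p}_{2s}(0)\sum_{n\neq 0}\e^{-n^2/(4s)}\le C_T'\,\bm{p}_{2s}(0)$; the wrap-around terms that destroy the off-diagonal pointwise bound never enter. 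Your second deviation --- solving the renewal equation $g=a+b\,\int_0^{\cdot}(\cdot-r)^{-1/2}g(r)\,\d r$ explicitly via its Mittag-Leffler-type series and running an induction $H_n\le g$ --- is a self-contained substitute for the citation of \cite[Lemma 15]{Dalang1999}. Net effect: your route is more unified and self-contained (and in the periodic case slightly sharper, extracting the $s^{-1/2}$ rate rather than mere integrability); the paper's route needs less about the periodic kernel and delegates the fractional Gronwall step to a standard reference.
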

\begin{proof}
          It is well known that for every $k\geq 2$ and $(t, x)\in [0, T]\times [0, L]$, $u_n(t\,,x)$ converges to
           $u(t\,, x)$ in $L^k(\Omega)$ as $n\to\infty$. Hence \eqref{eq:um2} follows from \eqref{eq:um1} and we only need to 
           prove \eqref{eq:um1}.

           It is clear from the definition of heat kernels in \eqref{heatkernel1},  \eqref{heatkernel2} and \eqref{Pheat} that 
           \begin{align}\label{u(0)}
           \sup_{L\geq 1}\sup_{(t, x)\in [0, T] \times [0, L]} u_0(t, x) \leq 1. 
           \end{align}
           According to \eqref{dn1}, we see from Burkholder's inequality  and Minkowski's inequality that for $k\geq 2$, 
           	\begin{align*}
		\|u_{n+1}(t\,,x) \|_k^2\leq 2 +
		2z_k^2\int_{0}^t\int_0^LG^2_{t-r}(x, z)\|u_n(r\,,z)||_k^2 \,\d z\d r,
	\end{align*}
	where $z_k$ denotes the constant in  Burkholder's inequality. The semigroup property for heat 
	kernel \eqref{semigroup} ensures that
	           	\begin{align}\label{picard}
		\|u_{n+1}(t\,,x) \|_k^2\leq 2 +
		2z_k^2\int_{0}^tG_{2(t-r)}(x, x)\sup_{z\in [0, L]}\|u_n(r\,,z)||_k^2 \,\d r,
	\end{align}
	In the case of Neumann and Dirichlet boundary conditions, we apply the uniform Gaussian upper bound on heat kernel in \eqref{uniformgaussian} to obtain 
         \begin{align*}
		\sup_{z\in [0, L]}\|u_{n+1}(t\,,z) \|_k^2\leq 2 +
		2z_k^2K_T\int_{0}^t\frac{1}{\sqrt{4\pi(t-r)}}\sup_{z\in [0, L]}\|u_n(r\,,z)||_k^2 \,\d r,
	\end{align*}
	where  the constant $K_T$ is defined below  \eqref{uniformgaussian}. Notice that the constants $z_k$ and $K_T$ do not depend on $L\geq 1$. We now apply \cite[Lemma 15 and (56)]{Dalang1999} with $f_n(t)=\sup_{z\in [0, L]}\|u_{n}(t\,,z) \|_k^2$ and $g(s)= \frac{1}{\sqrt{4\pi s}}$ to  obtain \eqref{eq:um1}. 
	
In the case of periodic boundary conditions,  from the expression of periodic heat kernel in \eqref{Pheat}, \eqref{picard} implies that for all $L\geq 1$, 
	           	\begin{align}\label{picard2}
		\|u_{n+1}(t\,,x) \|_k^2\leq 2 +
		2z_k^2\int_{0}^t\sum_{j\in \Z}\bm{p}_{2(t-r)}(j)\sup_{z\in [0, L]}\|u_n(r\,,z)||_k^2 \,\d r.
	\end{align}
	Therefore, in order to apply \cite[Lemma 15 and (56)]{Dalang1999} to conclude the proof,  it suffices to prove that for all $t>0$
	\begin{align}\label{integrable}
	\int_0^t\sum_{j\in \Z}\bm{p}_{r}(j) \, \d r<\infty. 
	\end{align}
	Indeed, by the identity \eqref{poisson}, 
		\begin{align*}
	\int_0^t\sum_{j\in \Z}\bm{p}_{r}(j) \, \d r =	\int_0^t \sum_{n\in \Z}\e^{-2r \pi^2 n^2} \, \d r=  \sum_{n\in \Z}\frac{1-\e^{-2t \pi^2 n^2}}{2\pi^2n^2}  <\infty,
	\end{align*}
	which verifies \eqref{integrable} and hence completes the proof. 
\end{proof}

\begin{remark}\label{stationarity}
In the case of periodic boundary conditions, since the value of the heat kernel $G_{t}(x, y)$ only depends on $x-y$ and the distribution of space-time white noise is shift invariant, we can apply the same arguments as in \cite[Lemma 7.1]{CKNP} to see that for all $t>0$ the process $\{u(t\,, x): x\in [0, L]\}$ is stationary. 
\end{remark}

Recently, Chen et al \cite{CKNP} have proved that the moments of Malliavin derivative of the solution to stochastic heat equation have a Gaussian upper bound; see \cite[Theorem 6.4]{CKNP}. We have the following estimate on the Malliavin derivative of the
 the solution to \eqref{PAM}.

\begin{lemma}\label{derivative:estimate}
	Fix $T >0$. Then $u(t\,, x)\in\bigcap_{k\ge 2}\mathbb{D}^{1, k}$ for all $(t, x)\in [0, T]\times [0, L]$.
	 And  there exists  
	$C_{T, k} >0$ such that for all  $k\geq 2$, $L\geq 1$,  $(t, x)\in [0, T]\times [0, L]$,
	and for almost every $(s\,, y) \in (0\,, t) \times \R$,
	\begin{align}\label{Du(t,x)}
		\|D_{s, y}u(t\,,x)\|_k \leq 
	\begin{cases}
		C_{T, k} \bm{1}_{[0, L]}(y)\,\bm{p}_{t - s}(x - y), & \text{Neumann/Dirichlet case};\\
				C_{T, k} \bm{1}_{[0, L]}(y)\,G_{t - s}(x, y),& \text{periodic case}.
				\end{cases}
	\end{align}
	where $\bm{p}_t(x)$ denotes the heat kernel on $\R$, defined in \eqref{heatkernel}.
\end{lemma}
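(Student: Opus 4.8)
The plan is to identify the linear stochastic integral equation solved by the Malliavin derivative and then to convert the desired pointwise bound on $\|D_{s,y}u(t\,,x)\|_k$ into a scalar convolution inequality that can be solved by iteration, uniformly in $L\ge1$. Since $u(t\,,x)$ is the $L^k(\Omega)$-limit of the Picard scheme \eqref{dn1}, I would argue at the level of the iterates $u_n$ and pass to the limit only at the end. By induction each $u_n(t\,,x)$ belongs to $\bigcap_{k\ge2}\mathbb{D}^{1,k}$: $u_0$ is deterministic, and the It\^o--Skorohod integral of an adapted integrand lying in $\mathbb{D}^{1,k}$ again lies in $\mathbb{D}^{1,k}$; differentiating \eqref{dn1} then yields, for $s<t$ and almost every $y$,
\begin{align*}
 D_{s,y}u_{n+1}(t\,,x)=\bm{1}_{[0,L]}(y)\,G_{t-s}(x,y)\,u_n(s\,,y)+\int_s^t\!\!\int_0^L G_{t-r}(x,z)\,D_{s,y}u_n(r\,,z)\,\eta(\d r\,\d z),
\end{align*}
and $D_{s,y}u_{n+1}(t\,,x)=0$ when $s>t$ by adaptedness.

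Writing $M_n(t\,,x):=\|D_{s,y}u_n(t\,,x)\|_k^2$ for fixed $(s,y)$, I would apply Burkholder's inequality (constant $z_k$), Minkowski's inequality and the semigroup property \eqref{semigroup}, together with the uniform moment bound $\|u_n(s\,,y)\|_k\le c_{T,k}$ from Lemma~\ref{prop:um}, to obtain
\begin{align*}
 M_{n+1}(t\,,x)\le 2c_{T,k}^2\,G_{t-s}(x,y)^2+2z_k^2\int_s^t\!\!\int_0^L G_{t-r}(x,z)^2\,M_n(r\,,z)\,\d z\,\d r.
\end{align*}
Since $M_0\equiv0$ and the right-hand side is monotone in $M_n$, it suffices to bound the fixed point $\mathcal M$ of the associated equality; then $M_n\le\mathcal M$ for every $n$ follows by induction, and the bound transfers to $u$ by passing to the limit.

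In the Neumann/Dirichlet case I would first invoke the uniform Gaussian upper bound \eqref{uniformgaussian}, of the form $G_\tau(x,z)\le K_T\,\bm{p}_\tau(x-z)$, to replace every kernel by the whole-space heat kernel $\bm{p}$ of \eqref{heatkernel} and to enlarge $\int_0^L$ to $\int_\R$. The iteration then closes thanks to the identity $\bm{p}_a(w)^2=\tfrac{1}{\sqrt{4\pi a}}\,\bm{p}_{a/2}(w)$ and Chapman--Kolmogorov, which give
\begin{align*}
 \int_\R \bm{p}_{t-r}(x-z)^2\,\bm{p}_{r-s}(z-y)^2\,\d z=\frac{\sqrt{t-s}}{\sqrt{4\pi}\,\sqrt{(t-r)(r-s)}}\,\bm{p}_{t-s}(x-y)^2.
\end{align*}
Iterating produces a series whose $m$-th term carries $\bm{p}_{t-s}(x-y)^2$ times a multiple Beta-type time integral with Gamma-function normalizations of Mittag--Leffler type, hence summable on $[0,T]$; this gives $\mathcal M(t\,,x)\le C_{T,k}^2\,\bm{p}_{t-s}(x-y)^2$ with $C_{T,k}$ independent of $L$. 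Passing this to the limit via the closedness of $D$ (a sequence bounded in $\mathbb{D}^{1,k}$ and convergent in $L^k$ has its limit in $\mathbb{D}^{1,k}$, with the bound preserved under the weak convergence of derivatives) yields both the estimate and the membership $u(t\,,x)\in\bigcap_{k\ge2}\mathbb{D}^{1,k}$.

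For the periodic case the same scheme applies, but now the bound is kept in terms of $G$ itself, and the clean whole-space identity is unavailable; this is the step I expect to be the main obstacle. What is needed is a uniform-in-$L$ convolution estimate of the type $\int_0^L G_{t-r}(x,z)^2 G_{r-s}(z,y)^2\,\d z\le \frac{C_T\sqrt{t-s}}{\sqrt{(t-r)(r-s)}}\,G_{t-s}(x,y)^2$ for the periodic kernel \eqref{Pheat}. I would extract it by expanding $G^2$ through the method of images and completing the square, using $\tfrac{1}{2\pi a}\e^{-w^2/a}=\tfrac{1}{\sqrt{4\pi a}}\bm{p}_{a/2}(w)$, which rewrites $G_a(x,z)^2$ as $\tfrac{1}{\sqrt{4\pi a}}$ times a convergent combination of half-time periodic kernels with coefficients $\e^{-(dL)^2/(4a)}$ controlled uniformly in $L\ge1$ via \eqref{poisson}; Chapman--Kolmogorov for $G$ on $[0,L]$ then closes the iteration exactly as above. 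With this estimate in hand the iteration, the summability of the time integrals and the limiting argument are identical, giving $\mathcal M(t\,,x)\le C_{T,k}^2\,G_{t-s}(x,y)^2$ uniformly in $L$.
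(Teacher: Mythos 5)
Your proposal follows essentially the same route as the paper: the Picard scheme plus Burkholder gives the recursive inequality, the Neumann/Dirichlet case closes via the uniform Gaussian bound \eqref{uniformgaussian} together with the identities \eqref{element}--\eqref{gamma}, and the uniform-in-$L$ convolution estimate you postulate for the periodic case is precisely the paper's Lemma \ref{subsemi}, proved there by the method of images, Poisson summation \eqref{poisson} and the theta-function modular relation, just as you sketch. The only place the paper is more careful is in transferring the pointwise $L^k$ bound from $u_n$ to $u$, where it uses mollification, Fatou's lemma and $L^k$-duality; this is the rigorous implementation of your appeal to closedness of $D$ and preservation of the bound under weak convergence of the derivatives.
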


\begin{proof}
         The proof is similar to that of Theorem 6.4 of Chen et al \cite{CKNP}.  The main difference is that we need control  the moments of Malliavin derivative of $u(t\,, x)$, uniformly for all $L\geq 1$.

	We apply the properties of the divergence operator \cite[Prop.\ 1.3.8]{Nualart}
	in order to deduce from \eqref{dn1} that for almost every $(s\,, y) \in (0\,, t)\times \R$,
	\begin{equation}\label{derivative}
		D_{s, y}u_{n + 1}(t\,,x) = \bm{1}_{[0, L]}(y)G_{t - s}(x, y)u_n(s\,,y) +
		\int_s^t\int_0^LG_{t - r}(x, z)D_{s, y}u_n(r\,,z)\,\eta(\d r\,\d z)
		\quad\text{a.s.}
	\end{equation}
	By induction, we see from \eqref{derivative} that  for all  $n\geq 0$ and $(t, x)\in [0, T] \times [0, L]$,   
	\begin{align}\label{vanish}
	D_{s, y}u_{n }(t\,,x)=0, \, \, \text{a.s.} \quad \text{if $y\not\in [0, L]$}.
	\end{align}
	Moreover, using \eqref{derivative}, \eqref{eq:um1},
	Burkholder's inequality and Minkowski's inequality, for $(s, y)\in (0, t)\times [0, L]$,
	\begin{align}\label{||Du(t,x)||}
		\|D_{s, y}u_{n+1}(t\,,x)\|_k^2& \leq 2c_{T,k}^2\,
		G^2_{t - s}(x, y) + 
		2z^2_k \int_s^t\d r\int_{0}^L\d z\ G^2_{t - r}(x, z)\|D_{s, y}u_n(r\,,z)\|_k^2,
			\end{align}
	where $z_k$ is the constant in Burkholder's inequality. 
 	Let $C_k := (2c_{T,k}^2)\vee(2z_k^2)$. We can iterate \eqref{||Du(t,x)||} to find that  for $(s, y)\in (0, t)\times [0, L]$,
	\begin{align}
		& \|D_{s, y}u_{n + 1}(t\,,x)\|_k^2\nonumber \\
			& \leq \sum_{j=0}^{n}C_k^{j+1}\int_s^t\d r_1
			\int_{0}^L\d z_1
			 \cdots\int_s^{r_{j -1}}\d r_j
			\int_{0}^L\d z_j\  G^2_{t - r_1}(x, z_1) \cdots
			G^2_{r_{j-1} - r_{j}}(z_{j-1}, z_j)G^2_{r_{j} - s}(z_{j}, y). \label{Du_{n+1}}
	\end{align}
	In the case of Neumann/Dirichlet boundary conditions, we apply \eqref{uniformgaussian} to see that 
	\begin{align}
		 \|D_{s, y}u_{n + 1}(t\,,x)\|_k^2
			&\leq \sum_{j=0}^{n}(C_kK_T^2)^{j+1}\int_s^t\d r_1
			\int_\R\d z_1
			 \cdots\int_s^{r_{j -1}}\d r_j\int_\R\d z_j\
			 \nonumber\\
			 & \qquad \qquad \qquad \qquad 
			  \bm{p}^2_{t - r_1}(x- z_1) \cdots
			\bm{p}^2_{r_{j-1} - r_j}(z_{j}- y)\bm{p}^2_{r_{j} - s}(z_{j}- y). \label{eq0}
	\end{align}
	In order to simplify the preceding expression, we need the following two identities
		\begin{align}\label{element}
		\int_{-\infty}^\infty\bm{p}^2_{t -s}(x-y)\bm{p}^2_{s -r}(y-z)\,\d y 
		= \sqrt{\frac{t -r}{4\pi (t -s)(s -r)}}\,\bm{p}^2_{t -r}(x-z),
	\end{align}
	and
	\begin{align}\label{gamma}
	\int_{0<r_j<\cdots<r_1<1}
			\frac{\d r_1\cdots\d r_j}{\sqrt{(1-r_1)\cdots(r_{j-1}-r_j) r_j}} =\frac{\Gamma(1/2)^{j+1}}{\Gamma((j+1)/2)},
	\end{align}
	where $\Gamma$ denotes the gamma function; see \cite[5.14.2]{OLBC10} for \eqref{gamma}.
	We see that
	\eqref{element} and \eqref{gamma} together ensure that
	\begin{align}
		&\int_s^t\d r_1
			\int_\R\d z_1
			 \cdots\int_s^{r_{j -1}}\d r_j
			\int_\R\d z_j\  \bm{p}^2_{t - r_1}(x- z_1) \cdots
			\bm{p}^2_{r_{j-1} - r_j}(z_{j}- y)\bm{p}^2_{r_{j} - s}(z_{j}- y) \nonumber \\
		&\quad = (4\pi)^{-\frac{j}{2}}\,\bm{p}^2_{t-s}(x-y)\int_s^t\d r_1
			\int_s^{r_1}\d r_2\cdots\int_s^{r_{j-1}}\d r_j\,
			\sqrt{\frac{t -s}{(t -r_1)\cdots(r_{j-1}- r_j)(r_j -s)}}  \nonumber \\
		& \quad = \left(\frac{t-s}{4\pi}\right)^{j/2}
			\bm{p}^2_{t-s}(x-y)\int_{0<r_j<\cdots<r_1<1}
			\frac{\d r_1\cdots\d r_j}{\sqrt{(1-r_1)\cdots(r_{j-1}- r_j) r_j}} \nonumber \\
		& \quad = \left(\frac{t-s}{4\pi}\right)^{j/2}
			\frac{\Gamma(1/2)^{j+1}}{\Gamma((j+1)/2)}\,
			\bm{p}^2_{t - s}(x - y). \label{Du_{n+1}'}
	\end{align}
	Hence, in the case of Neumann/Dirichlet boundary conditions, we combine \eqref{eq0}, \eqref{Du_{n+1}'}  and \eqref{vanish} to obtain that for $(s, y)\in (0, t)\times \R$,
	\begin{align}
		\|D_{s, y}u_{n + 1}(t\,,x)\|_k^2 &\leq \bm{1}_{[0, L]}(y)\,
			\bm{p}^2_{t - s}(x - y) \sum_{j = 0}^n (C_kK_T^2)^{j+1}
			\left(\frac{t-s}{4\pi}\right)^{j/2}
			\frac{\Gamma(1/2)^{j+1}}{\Gamma((j+1)/2)} \nonumber \\
		& \leq \bm{1}_{[0, L]}(y)\,\bm{p}^2_{t - s}(x - y)  \sum_{j = 0}^\infty
			\frac{(C_kK_T^2)^{j+1}T^j}{(4\pi)^{j/2}}\frac{\Gamma(1/2)^{j+1}}{\Gamma((j+1)/2)}. \label{uniform1}
	\end{align}
	
	In the case of periodic boundary conditions, analogous to the computations in \eqref{Du_{n+1}'} and \eqref{gamma}, we see from \eqref{Du_{n+1}} and Lemma \ref{subsemi} (compare with identity \eqref{element}) that 
	\begin{align}
		\|D_{s, y}u_{n + 1}(t\,,x)\|_k^2 		& \leq \bm{1}_{[0, L]}(y)\,G^2_{t - s}(x, y)  \sum_{j = 0}^\infty
			\frac{C_k^{j+1}T^j\vartheta(1/(2T\pi))^j}{(4\pi)^{j/2}}\frac{\Gamma(1/2)^{j+1}}{\Gamma((j+1)/2)}. \label{uniform2}
	\end{align}
	Therefore, taking into account \eqref{uniform1} and 
	\eqref{uniform2}, we obtain that there exists a constant $C_{T, k}>0$ such that
   \begin{align}\label{uniform}
		\|D_{s, y}u_{n+1}(t\,,x)\|_k \leq 
	\begin{cases}
		C_{T, k} \bm{1}_{[0, L]}(y)\,\bm{p}_{t - s}(x - y), & \text{Neumann/Dirichlet case};\\
				C_{T, k} \bm{1}_{[0, L]}(y)\,G_{t - s}(x, y),& \text{periodic case}.
				\end{cases}
	\end{align}
	Moreover, \eqref{uniform} yields that 
	\begin{equation}\label{uniformbound}
		\sup_{n \geq 0}\E\left(\|Du_{n}(t\,,x)\|_{\mathcal{H}}^2\right)
			<\infty.
	\end{equation}
	This is because in the Neumann/Dirichlet case, by semigroup property of heat kernel, 
	\begin{equation*}\label{||Du_n||}\begin{split}
		\sup_{n \geq 0}\E\left(\|Du_{n}(t\,,x)\|_{\mathcal{H}}^2\right)
			&\le C_{T,2}^2\int_0^t\d s\int_{-\infty}^\infty\d y\ \bm{p}^2_{t - s}(x - y)=C_{T,2}^2\int_0^t\bm{p}_{2(t-s)}(0)\, \d s = C_{T,2}^2\sqrt{t/\pi}<\infty,
	\end{split}\end{equation*}
	while in the periodic case,
	\begin{equation*}\label{||Du_n||}\begin{split}
		\sup_{n \geq 0}\E\left(\|Du_{n}(t\,,x)\|_{\mathcal{H}}^2\right)
			&\le C_{T,2}^2\int_0^t\d s\int_{0}^L\d y\ G^2_{t - s}(x - y)=C_{T,2}^2\int_0^tG_{2(t-s)}(0,0)\, \d s <\infty,
	\end{split}\end{equation*}
	where we have used 
	\eqref{integrable} in the inequality.
	
	The reminder of the proof follows from a similar approximation argument as in the proof of  \cite[Theorem 6.4]{CKNP}.
	First, 
	we deduce from \eqref{uniformbound}  and \cite[Lemma 1.2.3]{Nualart} that  $u(t\,,x) \in \mathbb{D}^{1,2}$
	and	$Du_{n}(t\,,x)$  converges to $Du(t\,,x)$ in the weak topology of
	$L^2(\Omega\, ; \mathcal{H})$ as $n\to\infty$.
	Then, we use a smooth approximation $\{\psi_\varepsilon\}_{\varepsilon>0}$
	to the identity in $\R_+\times \R$,  and apply Fatou's lemma and
	duality for $L^k$-spaces, in order
	to find that for almost every $(s\,,y) \in (0\,,t) \times \R$ and for all $k\ge 2$,
	\begin{align*}
		\|D_{s,y}u(t\,,x) \|_k & \le  \limsup_{\varepsilon \to 0}
			\left \| \int_0^\infty\d s'\int_{-\infty}^\infty\d y'\, D_{s',y'} u(t\,,x)
			\psi_\varepsilon(s-s', y-y')\right\|_k\\
		& \le  \limsup_{\varepsilon\to 0}
			\sup_{\|G \|_{k/(k - 1)}\le 1}
			\left| \int_0^\infty\d s'\int_{-\infty}^\infty\d y'\,  \E\left[ G D_{s',y'} u(t\,,x) \right]
			\psi_\varepsilon(s-s', y-y') \right|.
	\end{align*}
	Choose and fix a random variable $G\in L^{2}(\Omega)$ such that
	$\| G \|_{k/(k-1)} \le 1$.  Because $Du_{n}(t\,,x)$ converges weakly in $L^2(\Omega\,;\HH)$ 
	to $Du(t\,,x)$ as $n\to\infty$, we can write
	\begin{align*}
		& \left| \int_0^\infty\d s'\int_{-\infty}^\infty\d y'\, \E \left[ G D_{s',y'} u(t\,,x) \right]
			\psi_\varepsilon(s-s', y-y')  \right|  \\
		&\hskip1.5in= \lim _{n\rightarrow \infty} \left|
			\int_0^\infty\d s'\int_{-\infty}^\infty\d y'\,  \E\left[ G D_{s',y'} u_{n}(t\,,x) \right]
			\psi_\varepsilon(s-s', y-y') \right| \\
		&\hskip1.5in\le\limsup_{n\to\infty}
			\int_0^\infty\d s'\int_{-\infty}^\infty\d y'\, \left\| D_{s',y'} u_{n}(t\,,x) \right\|_k
			\psi_\varepsilon(s-s', y-y').  
	\end{align*}
	Now we plug the estimate  \eqref{uniform} in the above line and let $\varepsilon\to 0$ to conclude the proof of \eqref{Du(t,x)}.

	Finally, $u(t\,, x)\in\bigcap_{k\ge 2}\mathbb{D}^{1, k}$ follows immediately from the estimate in \eqref{Du(t,x)}. This completes the proof.
\end{proof}

\section{Asymptotic behavior of the covariance} \label{asym:cov}

In this section, we will analyze the asymptotic behavior of the covariance of the spatial integral of the solution to \eqref{PAM}.

Recall from \eqref{average} that
\[
	\mathcal{S}_{L,t}=\frac 1L \int_0^L \{ u(t\,,x) -\E[u(t\,, x)]\} \,\d x.
\]
The following result provides the asymptotic behavior of the covariance function of the renormalized sequence of processes  $\mathcal{S}_{L,t}$ as $L$ tends to infinity.
\begin{proposition}\label{pr:Cov:asymp}
	For every $t_1,t_2>0$,
	\[
		\lim_{L\to\infty}
		\Cov\left[  \sqrt{L}\, \mathcal{S}_{L,t_1} ~,~
		\sqrt{L}\, \mathcal{S}_{L,t_2}\right] =
		   \int_0^{t_1\wedge t_2} f(s)\, \d s,
	\]
	where the function $f$ is defined in \eqref{f}.
\end{proposition}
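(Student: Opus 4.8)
The plan is to compute the covariance explicitly using the Wiener chaos / Itô isometry structure of the mild solution, then analyze its $L\to\infty$ asymptotics. Starting from the mild form~\eqref{mild2}, the fluctuation $u(t,x)-\E[u(t,x)]$ is given by the stochastic integral $\int_0^t\int_0^L G_{t-s}(x,y)u(s,y)\,\eta(\d s\,\d y)$. Therefore, by the Walsh/Itô isometry, for $t_1\le t_2$,
\[
	\Cov\!\left[L\,\mathcal{S}_{L,t_1},\mathcal{S}_{L,t_2}\right]
	=\frac1L\int_0^{t_1}\!\d s\int_0^L\!\d y\,
	\left(\int_0^L\!\d x\,G_{t_1-s}(x,y)\right)
	\!\left(\int_0^L\!\d x'\,G_{t_2-s}(x',y)\right)\E[u(s,y)^2].
\]
Here I have used that the Malliavin/Wiener covariance of two stochastic integrals reduces to the integral against the common second moment $\E[u(s,y)^2]$; this requires justifying that the integrand $G_{t_i-s}(x,y)u(s,y)$ is the correct Skorohod/Walsh integrand, which follows from~\eqref{mild2} and the integrability furnished by Lemma~\ref{prop:um}.

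First I would treat the spatial integrals of the heat kernel. In the Neumann and periodic cases the kernel conserves mass, so $\int_0^L G_{t_i-s}(x,y)\,\d x=1$ for every interior $y$; in the Dirichlet case this integral is close to $1$ away from the boundary and decays only near the endpoints. Consequently the product of the two bracketed factors is essentially $\bm 1$ on a region of length $\sim L$, with boundary corrections confined to $O(\sqrt{T})$-neighborhoods of $0$ and $L$. After dividing by $L$, these boundary contributions are $O(1/L)$ and vanish in the limit, so the covariance converges to $\int_0^{t_1}\E[u(s,y)^2]\,\d s$ with $y$ in the ``bulk.'' The remaining task is to identify $\lim_{L\to\infty}\E[u(s,y)^2]$ with $f(s)$. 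For this I would show that, uniformly for $y$ in the bulk, the second moment of the interval solution converges to the second moment $\E[U(s,y)^2]=f(s)$ of the whole-line solution~\eqref{secondm}; this is plausible because, near an interior point, the interval heat kernel is uniformly close to the whole-line kernel $\bm p$, so the Picard/second-moment recursions coincide in the limit. The uniform moment bounds of Lemma~\ref{prop:um} give the domination needed to pass to the limit.

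\textbf{The main obstacle} will be making the convergence $\E[u(s,y)^2]\to f(s)$ rigorous and uniform enough to integrate, while simultaneously controlling the boundary effects. The subtlety is that $\E[u(s,y)^2]$ genuinely depends on $y$ and $L$ through the heat kernel, and near the boundary it differs substantially from $f(s)$; one must verify that the measure of ``bad'' $y$ is $o(L)$ and that the integrand is uniformly bounded there (which Lemma~\ref{prop:um} supplies). I would handle this by expanding $\E[u(s,y)^2]$ via its Wiener chaos series $\sum_n \|f_n(\cdot;s,y)\|^2$ (equivalently the second-moment Picard recursion), and comparing term by term against the whole-line series using the heat-kernel comparison estimates from the Appendix together with the identities~\eqref{element}--\eqref{gamma}. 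Dominated convergence on each chaos, plus summability uniform in $L$ (already implicit in the convergence of the moment recursion~\eqref{picard}), then yields the pointwise limit $f(s)$ in the bulk, and a final dominated-convergence argument in $(s,y)$ delivers $\int_0^{t_1\wedge t_2} f(s)\,\d s$.
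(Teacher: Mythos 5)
Your proposal is correct and its skeleton is the same as the paper's: reduce the covariance by It\^o's isometry to $\frac1L\int_0^{t_1\wedge t_2}\d s\int_0^L\d y\,\mathcal I_0(t_1-s,y)\,\mathcal I_0(t_2-s,y)\,\E[u(s,y)^2]$, show the $\mathcal I_0$ factors can be replaced by $1$, and identify the limiting second moment with $f$ via the Wiener chaos expansion and the series identity \eqref{series}. The one substantive difference is the intermediate statement you aim for: pointwise convergence $\E[u(s,y)^2]\to f(s)$, uniformly for $y$ in a bulk whose complement has measure $o(L)$, whereas the paper only ever proves the averaged statement $\frac1L\int_0^L\E[u(s,y)^2]\,\d y\to f(s)$ (Proposition \ref{pr:Cov:asymp2}). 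This changes the mechanics of the key computation: in Proposition \ref{wienerasym} the spatial average is used structurally --- the $\d x$-integral of $\|\mathcal I_k(t,x)\|_2^2$ collapses by the semigroup property to the diagonal kernel $G_{2(t-r_1)}(z_1,z_1)$, whose reflection terms then vanish thanks to the $1/L$ prefactor and dominated convergence --- while in your route the reflection terms of each kernel must be beaten pointwise, using that they are $O(\e^{-c\,d^2/T})$ when the reference point is at distance $d$ from the boundary. Both work; yours yields a slightly stronger local conclusion at the price of carrying a bulk/boundary decomposition inside every chaos. Two points you should make precise: the uniform-in-$L$ summability of the chaos norms is not ``implicit in \eqref{picard}'' --- you need an explicit bound of the form \eqref{I_bound}, which the paper extracts from \eqref{uniformgaussian} and \eqref{gamma3}; and in the Dirichlet case $1-\mathcal I_0(t,y)$ is not supported in an $O(\sqrt T)$-neighborhood of the endpoints but is a Gaussian tail in the distance from $y$ to $\{0,L\}$, so what you actually use is $\int_0^L[1-\mathcal I_0(t,y)]\,\d y=O(\sqrt T)$ (the paper reaches the same conclusion through the eigenfunction expansion in Lemma \ref{average0} combined with Lemma \ref{I_0asym}).
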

In order to prove Proposition \ref{pr:Cov:asymp}, we need the following supporting lemma. 

\begin{lemma}\label{I_0asym}
          Denote for $(t, x)\in (0, \infty) \times [0, L]$,
          \begin{align}\label{I_0}
          \mathcal{I}_0(t\,, x)= \int_0^LG_t(x, y)\, \d y \quad \text{and} \quad \mathcal{I}_0(0, x)= 1. 
          \end{align}
          Then 
          \begin{align}\label{sup}
         \sup_{L\geq 1}\sup_{(t, x)\in [0,\infty)\times [0, L]}\mathcal{I}_0(t\,,x)\leq 1
          \end{align}
          and for all $t_1, t_2>0$,
          \begin{align}\label{eq:I_0}
          \lim_{L\to\infty}\frac{1}{L}\int_0^L \mathcal{I}_0(t_1\,, x)\mathcal{I}_0(t_2\,, x)\, \d x =1. 
          \end{align}       
\end{lemma}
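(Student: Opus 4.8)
The plan is to treat the three boundary conditions according to how much mass the heat flow conserves. Note first that $\mathcal{I}_0(t,x)$ is exactly $u_0(t,x)$ from the Picard scheme \eqref{dn1}, and that it is the value at $(t,x)$ of the solution to the deterministic equation $\partial_t v=\tfrac12\partial_x^2 v$ with $v(0,\cdot)\equiv1$ under the relevant boundary condition. The bound \eqref{sup} is therefore nothing but \eqref{u(0)}: it rests only on the maximum principle (equivalently on the explicit image representations \eqref{heatkernel1}, \eqref{heatkernel2}, \eqref{Pheat}), so $0\le\mathcal{I}_0\le1$ holds for every $t\ge0$, not merely for $t\le T$. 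For \eqref{eq:I_0} I would first dispatch the Neumann and periodic cases: there the heat flow preserves constants (equivalently conserves total mass), so by uniqueness $\mathcal{I}_0(t,x)\equiv1$ for all $(t,x)$, and \eqref{eq:I_0} is immediate. Only the Dirichlet case, where mass leaks through the boundary, requires genuine work.

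For the Dirichlet case I would use the probabilistic representation $\mathcal{I}_0(t,x)=\P_x(\tau>t)$, where $\tau$ denotes the first exit time from $(0,L)$ of a Brownian motion with generator $\tfrac12\partial_x^2$ started at $x$. Writing $d(x)=\min(x,L-x)$ for the distance from $x$ to the boundary, a union bound over the two endpoints together with the reflection principle and the standard Gaussian tail estimate yields a boundary-layer bound of the form
\[
	0\le 1-\mathcal{I}_0(t,x)=\P_x(\tau\le t)\le C\,\e^{-d(x)^2/(2t)},
\]
with $C$ a universal constant. Integrating in $x$ over $[0,L]$ and using $\int_0^L \e^{-d(x)^2/(2t)}\,\d x\le 2\int_0^\infty \e^{-r^2/(2t)}\,\d r$ gives
\[
	\int_0^L\left[1-\mathcal{I}_0(t,x)\right]\d x\le C\sqrt{2\pi t}=:M_t<\infty,
\]
a bound that is \emph{independent of $L$}; this uniformity is the whole point.

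To conclude I would combine the two times via the elementary inequality $1-ab\le(1-a)+(1-b)$, valid for $a,b\in[0,1]$ (since the difference equals $(1-a)(1-b)\ge0$). Using $0\le\mathcal{I}_0\le1$ this gives
\[
	0\le 1-\frac1L\int_0^L\mathcal{I}_0(t_1,x)\mathcal{I}_0(t_2,x)\,\d x
	=\frac1L\int_0^L\left[1-\mathcal{I}_0(t_1,x)\mathcal{I}_0(t_2,x)\right]\d x
	\le\frac{M_{t_1}+M_{t_2}}{L},
\]
which tends to $0$ as $L\to\infty$, proving \eqref{eq:I_0}.

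The only substantive step is the Dirichlet boundary-layer estimate, i.e.\ quantifying the deviation of $\mathcal{I}_0$ from $1$ near the endpoints and showing that the \emph{total} lost mass $\int_0^L[1-\mathcal{I}_0(t,x)]\,\d x$ stays bounded as $L\to\infty$; the exit-time representation makes the Gaussian decay transparent. As an alternative to the probabilistic route, the same bound can be extracted analytically from the image-sum expression \eqref{heatkernel2} for the Dirichlet kernel, by estimating the subtracted ``reflected'' terms $\bm{p}_{t}(x+y-2nL)$; I would nonetheless favor the exit-time argument since it avoids any bookkeeping over the image index $n\in\Z$.
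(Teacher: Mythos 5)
Your proof is correct, but it takes a genuinely different route from the paper's. The paper handles all three boundary conditions at once: by symmetry and the semigroup property \eqref{semigroup} it collapses the product into a single kernel,
\begin{equation*}
	\frac1L\int_0^L \mathcal{I}_0(t_1,x)\,\mathcal{I}_0(t_2,x)\,\d x
	=\frac1L\int_{[0,L]^2} G_{t_1+t_2}(y_1,y_2)\,\d y_1\,\d y_2,
\end{equation*}
and then proves (in Lemma \ref{average0}) that this tends to $1$, the only nontrivial case being Dirichlet, which is done via the eigenfunction expansion \eqref{heatkernel3}: the double integral equals $\frac{8}{\pi^2}\sum_{k\ge1}(2k-1)^{-2}\e^{-(2k-1)^2\pi^2 t/(2L^2)}$, and dominated convergence plus $\sum_{k\ge1}(2k-1)^{-2}=\pi^2/8$ gives the limit. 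You instead observe that Neumann/periodic are trivial because $\mathcal{I}_0\equiv1$ (mass conservation, \eqref{int=1}), and attack Dirichlet directly through the exit-time representation $\mathcal{I}_0(t,x)=\P_x(\tau>t)$, the reflection principle, and the Gaussian tail, obtaining the $L$-uniform boundary-layer bound $\int_0^L[1-\mathcal{I}_0(t,x)]\,\d x\le M_t$; combined with the elementary inequality $1-ab\le(1-a)+(1-b)$ for $a,b\in[0,1]$, this closes the argument. Both proofs are complete. What each buys: your argument is more elementary (no Fourier series, no semigroup manipulation) and is quantitative, yielding an explicit $O(1/L)$ rate of convergence in \eqref{eq:I_0} rather than a bare limit, which would even let one dispense with the dominated convergence step where the lemma is applied in Proposition \ref{pr:Cov:asymp}; the paper's argument, by contrast, treats the three boundary conditions in a unified way and reuses only kernel identities (symmetry, semigroup, the sine series) already catalogued in its appendix, at the cost of being purely qualitative. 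One small point to make explicit in your write-up: the step $0\le 1-\frac1L\int_0^L\mathcal{I}_0(t_1,x)\mathcal{I}_0(t_2,x)\,\d x$ and the use of $1-ab\le(1-a)+(1-b)$ both require $0\le\mathcal{I}_0\le1$, i.e.\ nonnegativity of the Dirichlet kernel as well as the upper bound; this is immediate from your probabilistic representation, so it is a remark rather than a gap.
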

\begin{proof}
          The estimate \eqref{sup} is clear and we need prove \eqref{eq:I_0}.
          By Lemma \ref{hkproperty} (1) and the semigroup property \eqref{semigroup}, we write for all $t_1, t_2>0$,
                    \begin{align*}
          \frac{1}{L}\int_0^L \mathcal{I}_0(t_1\,, x)\mathcal{I}_0(t_2\,, x)\, \d x &=\frac{1}{L}\int_0^L \d x \int_{[0, L]^2}\d y_1\d y_2\, 
          G_{t_1}(x, y_1)          G_{t_2}(x, y_2)\\
          &=\frac{1}{L} \int_{[0, L]^2}
          G_{t_1+t_2}(y_1, y_2) \, \d y_1\d y_2 \to 1, \quad \text{as $L\to \infty$},
          \end{align*}    
          owing to Lemma    \ref{average0}.
\end{proof}
\begin{proof}[Proof of Proposition \ref{pr:Cov:asymp}]
         Using the mild form in \eqref{mild} and Ito's isometry, we write
         \begin{align}
         \Cov\left[  \sqrt{L}\, \mathcal{S}_{L,t_1} ~,~
		\sqrt{L}\, \mathcal{S}_{L,t_2}\right] &= \frac{1}{L}\int_{[0, L]^2} \Cov(u(t_1\,, x)\,, u(t_2\,, y))\, \d x\d y \nonumber\\
		&=\frac{1}{L}\int_{[0, L]^2}  \d x\d y \int_0^{t_1\wedge t_2}\d s\int_0^L\d z\, G_{t_1-s}(x, z)G_{t_2-s}(y, z)\E[u(s\,,z)^2] \nonumber\\
		&=\frac{1}{L} \int_0^{t_1\wedge t_2}\d s\int_0^L\d z\,\mathcal{I}_0(t_1-s\,, z)\mathcal{I}_0(t_2-s\,, z)\,\E[u(s\,,z)^2], \nonumber
         \end{align}
         where the quantity $\mathcal{I}_0$ is defined in \eqref{I_0}.
         Moreover, we have 
         \begin{align*}
                  \Cov\left[  \sqrt{L}\, \mathcal{S}_{L,t_1} ~,~
		\sqrt{L}\, \mathcal{S}_{L,t_2}\right]
		 & = \frac{1}{L} \int_0^{t_1\wedge t_2}\d s\int_0^L\d z\, \left[\mathcal{I}_0(t_1-s\,, z)\mathcal{I}_0(t_2-s\,, z)-1\right]\E[u(s\,,z)^2]\nonumber\\
		 &\quad
		 +   \int_0^{t_1\wedge t_2}\d s\, \frac{1}{L}\int_0^L\d z\,\E[u(s\,,z)^2].
         \end{align*}
         By \eqref{eq:um2}, 
         \begin{align*}
        & \left|  \frac{1}{L} \int_0^{t_1\wedge t_2}\d s\int_0^L\d z\, \left[\mathcal{I}_0(t_1-s\,, z)\mathcal{I}_0(t_2-s\,, z)-1\right]\E[u(s\,,z)^2] \right|\\
         & \quad \leq c_{T,2}^2  \frac{1}{L} \int_0^{t_1\wedge t_2}\d s\int_0^L\d z\, \left[1- \mathcal{I}_0(t_1-s\,, z)\mathcal{I}_0(t_2-s\,, z)\right]
         \to 0 \quad \text{as $L\to\infty$},
         \end{align*}
        thanks to Lemma \ref{I_0asym} and dominated convergence theorem.

         Therefore, applying \eqref{eq:um2} and dominated convergence theorem, we obtain that
                  \begin{align}
         \lim_{L\to\infty}\Cov\left[  \sqrt{L}\, \mathcal{S}_{L,t_1} ~,~
		\sqrt{L}\, \mathcal{S}_{L,t_2}\right] 		&=  \int_0^{t_1\wedge t_2}\d s\lim_{L\to\infty} \frac{1}{L}\int_0^L\d z\,\E[u(s\,,z)^2] \nonumber\\
		&=   \int_0^{t_1\wedge t_2} f(s)\, \d s, \nonumber
         \end{align}
         where the second identity follows from Proposition \ref{pr:Cov:asymp2} below. 
\end{proof}

\begin{proposition}\label{pr:Cov:asymp2}
	For every $t>0$,
	\begin{align}\label{var:f}
	\lim_{L\to\infty} \frac{1}{L}\int_0^L\E[u(t\,,x)^2] \, \d x =f(t),
	\end{align}
        where the function $f$ is defined in \eqref{f}.
\end{proposition}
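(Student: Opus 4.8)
The plan is to turn \eqref{var:f} into a statement about a linear renewal (Volterra) equation satisfied by the spatial average
\[ M_L(t):=\frac1L\int_0^L\E[u(t,x)^2]\,\d x, \]
and to identify its $L\to\infty$ limit with the whole-line equation characterizing $f$. First I would apply It\^o's isometry to the mild form \eqref{mild} to get, for $H_L(t,x):=\E[u(t,x)^2]$,
\[ H_L(t,x)=\mathcal{I}_0(t,x)^2+\int_0^t\d s\int_0^L\d y\,G_{t-s}(x,y)^2\,H_L(s,y), \]
which is exactly the second-moment identity already used (with $t_1=t_2$) in the proof of Proposition \ref{pr:Cov:asymp}. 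Averaging in $x$ and interchanging integrals, and writing $A_L(t):=\frac1L\int_0^L\mathcal{I}_0(t,x)^2\,\d x$, I obtain
\[ M_L(t)=A_L(t)+\int_0^t\d s\,\frac1L\int_0^L\d y\,g_L(t-s,y)\,H_L(s,y),\qquad g_L(\tau,y):=\int_0^L G_\tau(x,y)^2\,\d x=G_{2\tau}(y,y), \]
the last identity coming from symmetry of $G$ and the semigroup property \eqref{semigroup}. The target obeys the analogous whole-line renewal equation
\[ f(t)=1+\int_0^t p(t-s)\,f(s)\,\d s,\qquad p(\tau):=\bm{p}_{2\tau}(0)=\tfrac1{\sqrt{4\pi\tau}}=\int_\R\bm{p}_\tau(z)^2\,\d z, \]
which is the renewal equation for the whole-line second moment $\E[U(t,0)^2]=f(t)$ recorded in \eqref{secondm}.

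Next I would replace the on-diagonal interval kernel $g_L(\tau,y)=G_{2\tau}(y,y)$ by its whole-line value $\bm{p}_{2\tau}(0)$. Splitting $g_L=p+(g_L-p)$ gives
\[ M_L(t)=A_L(t)+\int_0^t p(t-s)\,M_L(s)\,\d s+\mathcal E_L(t), \]
and, using the uniform moment bound \eqref{eq:um2} (so $H_L\le c_{T,2}^2$),
\[ |\mathcal E_L(t)|\le c_{T,2}^2\int_0^T\Delta_L(\tau)\,\d\tau,\qquad \Delta_L(\tau):=\frac1L\int_0^L\bigl|G_{2\tau}(y,y)-\bm{p}_{2\tau}(0)\bigr|\,\d y. \]
The heart of the matter is a boundary-layer estimate for $\Delta_L$. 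The uniform Gaussian bound \eqref{uniformgaussian} already gives $G_{2\tau}(y,y)\le K_T^2\,\bm{p}_{2\tau}(0)$ in the Neumann/Dirichlet cases, hence $\Delta_L(\tau)\le(K_T^2+1)p(\tau)$, integrable on $(0,T)$ uniformly in $L$; while the method of images shows $G_{2\tau}(y,y)\to\bm{p}_{2\tau}(0)$ once $\mathrm{dist}(y,\{0,L\})\gg\sqrt\tau$, the deviation being an exponentially small image sum supported in a boundary layer of width $O(\sqrt\tau)$. Since this layer occupies a vanishing fraction of $[0,L]$, one obtains in fact $\Delta_L(\tau)\le C_T/L$ uniformly in $\tau\in(0,T]$, so $\int_0^T\Delta_L\,\d\tau=O(1/L)$ and $\sup_{t\le T}|\mathcal E_L(t)|\to0$. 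The periodic case fits the same scheme: there $g_L(\tau,y)=\sum_{n}\bm{p}_{2\tau}(nL)$ is independent of $y$, so $\Delta_L(\tau)=\sum_{n\ne0}\bm{p}_{2\tau}(nL)$, which tends to $0$ and is dominated in $\tau$ by $\sum_{n\ne0}\bm{p}_{2\tau}(n)$ via \eqref{integrable}. Likewise $\sup_{t\le T}|A_L(t)-1|\to0$: for Neumann and periodic conditions the kernel conserves mass so $\mathcal{I}_0\equiv1$ and $A_L\equiv1$, and for Dirichlet conditions $A_L(t)\to1$ by Lemma \ref{I_0asym} with $t_1=t_2=t$, uniformly on $[0,T]$ because $t\mapsto A_L(t)$ is nonincreasing, so that $\sup_{t\le T}(1-A_L(t))=1-A_L(T)\to0$.

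Finally I would compare the two renewal equations. With $D_L(t):=|M_L(t)-f(t)|$ and $\eta_L:=\sup_{t\le T}\bigl(|A_L(t)-1|+|\mathcal E_L(t)|\bigr)\to0$, subtraction yields the Volterra inequality
\[ D_L(t)\le\eta_L+\int_0^t p(t-s)\,D_L(s)\,\d s. \]
Since $p(\tau)=1/\sqrt{4\pi\tau}$ is weakly singular but integrable, iterating this inequality generates the convolution powers $p^{\ast n}$, whose time integrals are computed exactly by the Gamma identity \eqref{gamma}; the resulting series is a convergent Mittag-Leffler-type bound, giving $\sup_{t\le T}D_L(t)\le C_T\,\eta_L\to0$, which is \eqref{var:f}. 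I expect the boundary-layer control of $\Delta_L(\tau)$ to be the main obstacle: one must quantify how the on-diagonal kernel $G_{2\tau}(y,y)$ on $[0,L]$ departs from its whole-line value near the two endpoints and secure a bound uniform in both $L$ and $\tau$ (so as to neutralize the $\tau\to0$ singularity of $p$); these are precisely the heat-kernel estimates assembled in the Appendix.
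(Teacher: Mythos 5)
Your proposal is correct, but it follows a genuinely different route from the paper. The paper proves Proposition \ref{pr:Cov:asymp2} through the Wiener chaos expansion \eqref{wiener2}: Propositions \ref{wienerasym} and \ref{secondp} show, for each fixed chaos level $k$, that $\frac1L\int_0^L\|\mathcal{I}_k(t,x)\|_2^2\,\d x\to (t/4)^{k/2}/\Gamma((k+2)/2)$ (by peeling off the image terms of the on-diagonal kernel $G_{2\tau}(z,z)$ one time variable at a time), the uniform bound \eqref{I_bound} licenses exchanging $\lim_{L\to\infty}$ with $\sum_k$, and the series identity \eqref{series} then sums the limits to $f(t)$. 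Your renewal-equation argument is instead the route the paper only sketches in the Remark following its proof (equations \eqref{renew2} and \eqref{renew1}), but with one essential improvement: the paper's remark takes the \emph{existence} of $\lim_{L\to\infty}\frac1L\int_0^L\E[u(t,x)^2]\,\d x$ as an input, citing the chaos propositions for it, whereas your weakly singular Gronwall/Volterra comparison bounds $\sup_{t\le T}|M_L(t)-f(t)|$ by $C_T\,\eta_L$ directly, so existence and identification of the limit come simultaneously and the chaos machinery is never needed. Your supporting estimates do hold with the paper's kernel formulas: in the Neumann/Dirichlet cases, \eqref{heatkernel1} and \eqref{heatkernel2} give $|G_{2\tau}(y,y)-\bm{p}_{2\tau}(0)|\le\sum_{n\ne0}\bm{p}_{2\tau}(2nL)+\sum_{n\in\Z}\bm{p}_{2\tau}(2y+2nL)$, where the second sum integrates in $y$ over $[0,L]$ to exactly $1/2$ (so contributes $1/(2L)$ to $\Delta_L$) and the first is $O_T(1/L)$ uniformly in $\tau\in(0,T]$ because $\sup_{\tau>0}\tau^{-1/2}\e^{-L^2/\tau}=O(1/L)$; in the periodic case $\Delta_L(\tau)=\sum_{n\ne0}\bm{p}_{2\tau}(nL)$ obeys the same bound. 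For the Dirichlet term $A_L$, your monotonicity claim is justified since $\mathcal{I}_0(t,x)$ is the survival probability of Brownian motion killed at $\{0,L\}$ (equivalently, $\partial_t\mathcal{I}_0\le0$ follows by integrating $\tfrac12\partial_y^2G_t(x,\cdot)$ over $[0,L]$ and using $G_t(x,\cdot)\ge0$ with zero boundary values), which upgrades the pointwise Lemma \ref{I_0asym} to the uniform statement you need; and the iteration of the Volterra inequality converges by exactly the Gamma-function computation \eqref{gamma}, using that $M_L$ is bounded via \eqref{eq:um2}. What your route buys: it is more elementary, it yields convergence uniformly on $[0,T]$, and since $\eta_L=O(1/L)$ it even produces a rate, which the paper's proof of this proposition does not state. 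What the paper's route buys: the exact chaos-level limits make the emergence of the series \eqref{series}, and hence of the formula for $f$, completely transparent, and the periodic case collapses to stationarity with no extra work.
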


In order to prove Proposition \ref{pr:Cov:asymp2}, we need the Wiener chaos expansion for the solution to \eqref{PAM}: 
for every $(t, x)\in [0, T] \times [0, L]$,
\begin{align}\label{wiener}
u(t\,, x) =\sum_{k=0}^{\infty} \mathcal{I}_k(t\,, x),
\end{align}
where  $\mathcal{I}_0(t\,, x)$ is defined in \eqref{I_0} and for $k\geq 1$, 
\begin{align}\label{I}
 \mathcal{I}_k(t\,, x)= \int_0^t\int_0^L\eta(\d r_1\, \d z_1)\, G_{t-r_1}(x, z_1)
 \ldots \int_0^{r_{k-1}}\int_0^L\eta(\d r_k\, \d z_k)\, G_{r_{k-1}-r_k}(z_{k-1}, z_k)\mathcal{I}_0(r_k\,, z_k).
\end{align}
Moreover, by multiple Ito's isometry, 
\begin{align}\label{wiener2}
\|u(t\,, x) \|^2_2=\sum_{k=0}^{\infty} \| \mathcal{I}_k(t\,, x)\|^2_2,
\end{align}
where 
\begin{align}\label{I2}
\| \mathcal{I}_k(t\,, x) \|_2^2= \int_0^t\d r_1\int_0^L\d z_1\, G^2_{t-r_1}(x, z_1)
 \ldots \int_0^{r_{k-1}} \d r_{k}\int_0^L\d z_{k}\, G^2_{r_{k-1}-r_k}(z_{k-1}, z_k)\mathcal{I}^2_0(r_k\,, z_k).
\end{align}

\begin{proposition}\label{wienerasym}
Fix $T>0$. Let $\mathcal{I}_k$ be as in \eqref{I}. In the case of Neumann/Dirichlet boundary conditions, for every $k\in \Z_+$,
\begin{align}\label{I_bound}
\sup_{L\geq 1}\sup_{(t, x)\in [0, T]\times [0, L]}\| \mathcal{I}_k(t\,, x) \|_2^2 \leq  \frac{K_T^{2k} 4^{-k/2}T^{k/2}}{\Gamma((k+2)/2)},
\end{align}
where $K_T$ is defined below \eqref{uniformgaussian} and for every $t>0$ and $k\in \Z_+$
\begin{align}\label{I_k:average}
\lim_{L\to\infty} \frac{1}{L}\int_0^L\| \mathcal{I}_k(t\,, x) \|_2^2\, \d x= \frac{(t/4)^{k/2}}{\Gamma((k+2)/2)}.
\end{align}
\end{proposition}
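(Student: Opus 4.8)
The plan is to treat the two assertions separately: I would obtain the uniform bound \eqref{I_bound} by the same Gaussian-chain computation already used in Lemma \ref{derivative:estimate}, and then deduce the limit \eqref{I_k:average} by using that bound as a dominating function over the time simplex, reducing matters to a pointwise-in-time analysis of the spatial average.

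For \eqref{I_bound} I would start from the explicit second-moment formula \eqref{I2}, bound each heat kernel by $G_s(x,y)\le K_T\bm{p}_s(x-y)$ via \eqref{uniformgaussian}, use $\mathcal{I}_0^2\le 1$ from \eqref{sup}, and enlarge every spatial integral from $[0,L]$ to $\R$. This bounds $\|\mathcal{I}_k(t,x)\|_2^2$ by $K_T^{2k}$ times a pure whole-space Gaussian chain. Iterating the convolution identity \eqref{element} collapses the $z$-integrals to $(4\pi)^{-k/2}\bigl[(t-r_1)(r_1-r_2)\cdots(r_{k-1}-r_k)\bigr]^{-1/2}$; the only change from Lemma \ref{derivative:estimate} is that the terminal factor here is $\mathcal{I}_0^2\le 1$ rather than a heat kernel, so the last step is a free integration of $\bm{p}^2$ against $1$. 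Integrating this over $0<r_k<\cdots<r_1<t$ is a Dirichlet/Beta integral (a variant of \eqref{gamma} lacking the $r_k^{-1/2}$ weight), which equals $t^{k/2}\pi^{k/2}/\Gamma((k+2)/2)$; combining this with the $(4\pi)^{-k/2}$ and $K_T^{2k}$ prefactors and $t\le T$ gives exactly \eqref{I_bound}.

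For the limit \eqref{I_k:average} I would write $\frac1L\int_0^L\|\mathcal{I}_k(t,x)\|_2^2\,\d x=\int_{0<r_k<\cdots<r_1<t}A_k^L(r_1,\dots,r_k)\,\d r_1\cdots\d r_k$, where $A_k^L$ denotes the spatial average (over $x$ and over $z_1,\dots,z_k$) of the integrand in \eqref{I2}. The computation of the previous paragraph dominates $A_k^L$ uniformly in $L$ by the integrable function $K_T^{2k}(4\pi)^{-k/2}[(t-r_1)\cdots(r_{k-1}-r_k)]^{-1/2}$, so by dominated convergence it suffices to identify $\lim_{L\to\infty}A_k^L$ for almost every time vector and then integrate the limit over the simplex (the same Dirichlet integral, now with the constant $1$ in place of $K_T^{2k}$, produces $(t/4)^{k/2}/\Gamma((k+2)/2)$). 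The asserted pointwise limit is the whole-space value $(4\pi)^{-k/2}[(t-r_1)\cdots(r_{k-1}-r_k)]^{-1/2}$, which is precisely the ($x$-independent) second moment of the $k$-th Wiener-chaos component of the solution $U$ on $\R$ from \eqref{PAM2}; this is what makes summing \eqref{I_k:average} over $k$ recover $f(t)=\E[U(t,0)^2]$.

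To establish this pointwise limit I would integrate the spatial variables one at a time, exploiting the exact identity $\int_0^L G^2_s(x,z)\,\d x=G_{2s}(z,z)$ coming from the semigroup property \eqref{semigroup} and the symmetry of $G$. Integrating $x$ turns the leading factor into the diagonal kernel $G_{2(t-r_1)}(z_1,z_1)$; peeling off $z_1,\dots,z_{k-1}$ then alternates the replacement of $G_{2s}(z,z)$ by its bulk value $(4\pi s)^{-1/2}=\bm{p}_{2s}(0)$ with the exact semigroup step $\int_0^L G^2_{s'}(z,z')\,\d z=G_{2s'}(z',z')$, which regenerates a diagonal kernel at the next variable, while the final variable contributes $\frac1L\int_0^L\mathcal{I}_0^2(r_k,z_k)\,\d z_k\to 1$, a computation of the type proved in Lemma \ref{average0}. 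The product of the bulk constants is exactly the claimed limit. I expect the error control in each bulk replacement to be the main obstacle: the discrepancy $G_{2s}(z,z)-(4\pi s)^{-1/2}$ is concentrated in an $O(1)$-size boundary layer — transparent from the image-sum expressions \eqref{heatkernel1} and \eqref{heatkernel2} — and its integral over $[0,L]$ stays bounded as $L\to\infty$, so once it is carried through the remaining (uniformly bounded) chain integrals and divided by $L$ it is $O(1/L)$. The delicate point is to verify that these boundary errors are not amplified by the successive integrations, i.e.\ that the single overall factor $1/L$ is cancelled only by the one genuine factor of $L$ furnished by $\int_0^L\mathcal{I}_0^2$ in the main term, so that all finitely many error contributions vanish as $L\to\infty$.
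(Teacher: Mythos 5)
Your proposal is correct and follows essentially the same route as the paper: the uniform bound via $G_s(x,y)\le K_T\bm{p}_s(x-y)$, the chain identity \eqref{element} and the Dirichlet integral \eqref{gamma3}; and the limit via the diagonal identity $\int_0^L G_s^2(x,z)\,\d x=G_{2s}(z,z)$, the image-sum splitting of $G_{2s}(z,z)$ into the bulk value $\bm{p}_{2s}(0)$ plus boundary-layer terms whose $[0,L]$-integrals stay $O(1)$, and Lemma \ref{average0} for the final factor. The only (harmless) organizational difference is that you fix the time variables, prove the pointwise limit of the spatial average, and then invoke dominated convergence over the time simplex, whereas the paper interleaves the time integration with the successive spatial decompositions and kills the error terms $J^{(2)}_{k,j}$, $J^{(3)}_{k,j}$ one stage at a time.
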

\begin{proof}
From \eqref{sup}, \eqref{I2} and \eqref{uniformgaussian},
          \begin{align}\label{I3}
\| \mathcal{I}_k(t\,, x) \|_2^2&\leq  K_T^{2k} \int_0^t\d r_1\int_{\R}\d z_1\, \bm{p}^2_{t-r_1}(x- z_1)
 \ldots \int_0^{r_{k-1}} \d r_{k}\int_{\R}\d z_{k}\, \bm{p}^2_{r_{k-1}-r_k}(z_{k-1}- z_k) \nonumber\\
 &=  K_T^{2k} (4\pi)^{-k/2}t^{k/2}\int_{0<r_k<\cdots <r_{1}<1}\d r_1\ldots\d r_k \sqrt{\frac{1}{(1-r_1)\times \ldots\times(r_{k-1}-r_k)}}\nonumber\\
 &= K_T^{2k} (4\pi)^{-k/2}t^{k/2}\frac{\Gamma(1/2)^k}{\Gamma((k+2)/2)}\leq \frac{K_T^{2k} 4^{-k/2}T^{k/2}}{\Gamma((k+2)/2)},
\end{align}
where the first equality follows from the elementary identity \eqref{element} and change of variables, and the second one holds by the following identity
\begin{align}\label{gamma3}
\int_{0<r_k<\cdots <r_{1}<1}\d r_1\ldots\d r_k \sqrt{\frac{1}{(1-r_1)\times \ldots\times(r_{k-1}-r_k)}}= \frac{\Gamma(1/2)^k}{\Gamma((k+2)/2)},
\end{align}
see \cite[5.14.1]{OLBC10}. This proves \eqref{I_bound}.

We proceed to prove \eqref{I_k:average}.  By \eqref{I2} and Lemma \ref{hkproperty} (1), (3), 
\begin{align*}
&\frac{1}{L}\int_0^L\| \mathcal{I}_k(t\,, x) \|_2^2\, \d x\\
&\quad= \frac{1}{L}
\int_0^t\d r_1\int_0^L\d z_1\, G_{2(t-r_1)}(z_1, z_1)
  \ldots \int_0^{r_{k-1}} \d r_{k}\int_0^L\d z_{k}\, G^2_{r_{k-1}-r_k}(z_{k-1}, z_k)\mathcal{I}^2_0(r_k\,, z_k)\nonumber\\
  & \quad := J_{k,1}^{(1)}+  J_{k,1}^{(2)}+  J_{k,1}^{(3)},
\end{align*}
where, using the expression of heat kernel for $G_{2(t-r_1)}(z_1, z_1)$ in \eqref{heatkernel1} and \eqref{heatkernel2},
\begin{align*}
J_{k,1}^{(1)}&= 
\frac{1}{L}
\int_0^t\d r_1\, \bm{p}_{2(t-r_1)}(0) \int_0^L\d z_1  \int_0^{r_{1}} \d r_{2}\int_0^L\d z_{2}\, G^2_{r_{1}-r_2}(z_{1}, z_2)\\
&\qquad  \qquad \times
  \ldots \times \int_0^{r_{k-1}} \d r_{k}\int_0^L\d z_{k}\, G^2_{r_{k-1}-r_k}(z_{k-1}, z_k)\mathcal{I}^2_0(r_k\,, z_k),\\
  J_{k,1}^{(2)}&= \frac{1}{L}
\int_0^t\d r_1\,\sum_{n\neq 0}\bm{p}_{2(t-r_1)}(2nL)  \int_0^L\d z_1  \int_0^{r_{1}} \d r_{2}\int_0^L\d z_{2}\, G^2_{r_{1}-r_2}(z_{1}, z_2)\\
&\qquad  \qquad \times
  \ldots \times \int_0^{r_{k-1}} \d r_{k}\int_0^L\d z_{k}\, G^2_{r_{k-1}-r_k}(z_{k-1}, z_k)\mathcal{I}^2_0(r_k\,, z_k),\
,\\
  J_{k,1}^{(3)}&= \pm\frac{1}{L}
  \int_0^t\d r_1\int_0^L\d z_1\,\sum_{n\in \Z}\bm{p}_{2(t-r_1)}(2z_1+ 2nL)   \int_0^{r_{1}} \d r_{2}\int_0^L\d z_{2}\, G^2_{r_{1}-r_2}(z_{1}, z_2)\\
&\qquad  \qquad \times
  \ldots \times \int_0^{r_{k-1}} \d r_{k}\int_0^L\d z_{k}\, G^2_{r_{k-1}-r_k}(z_{k-1}, z_k)\mathcal{I}^2_0(r_k\,, z_k),
\end{align*}
where, in the definition of $  J_{k,1}^{(3)}$, the sign "$+$" ("$-$" respectively) corresponds  to Neumann (Dirichlet) heat kernel. 

Now, we apply \eqref{sup} and \eqref{uniformgaussian} to see that
\begin{align*}
  J_{k,1}^{(2)}&\leq K_T^{2(k-1)}L^{-1}
\int_0^t\d r_1\,\sum_{n\neq 0}\bm{p}_{2(t-r_1)}(2nL) \int_\R\d z_1
 \int_0^{r_{1}} \d r_{2}\int_{\R}\d z_{2}\, \bm{p}^2_{r_{1}-r_2}(z_{1}- z_2)\\
& \qquad \qquad \times
  \ldots \int_0^{r_{k-1}} \d r_{k}\int_{0}^L\d z_{k}\, \bm{p}^2_{r_{k-1}-r_k}(z_{k-1}- z_k)\\
  &= K_T^{2(k-1)}
\int_{0<r_k<\cdots<r_1<t}\d r_1\ldots \d r_k
\,\sum_{n\neq 0}\bm{p}_{2(t-r_1)}(2nL) \,
 \bm{p}_{2(r_{1}-r_2)}(0)\times \ldots \times
 \bm{p}_{2(r_{k-1}-r_k)}(0),
\end{align*}
where we use semigroup property $k-1$ times in the equality. 
Since 
\begin{align}\label{dominated}
\sum_{n\neq 0}\bm{p}_{2(t-r_1)}(2nL) \leq \sum_{n\neq 0}\bm{p}_{2(t-r_1)}(2n) 
\leq \bm{p}_{2(t-r_1)}(0)  \sum_{n\neq 0}\e^{-\frac{n^2}{t}} \quad \text{ for all $L\geq 1$,}
\end{align}
we apply
dominated convergence theorem to obtain $\lim_{L\to \infty}  J_{k,1}^{(2)}=0$.

Moreover, using \eqref{sup}, \eqref{uniformgaussian} and the following two identities
\begin{align}\label{identity}
\bm{p}_t(\sigma x)= \sigma^{-1}\bm{p}_{t/\sigma^2}(x) \quad \text{and} \quad \bm{p}^2_t(x)=\frac{1}{\sqrt{4\pi t}}\bm{p}_{t/2}(x)
\qquad \text{for all $x\in \R$, $\sigma>0$},
\end{align}
we see that 
\begin{align*}
|J_{k,1}^{(3)}|
&\leq K_T^{2(k-1)}L^{-1}
\int_0^t\d r_1 \int_{\R}\d z_1 \,\sum_{n\in \Z}\bm{p}_{2(t-r_1)}(2z_1 +2nL)
 \int_0^{r_{1}} \d r_{2}\int_{\R}\d z_{2}\, \bm{p}^2_{r_{1}-r_2}(z_{1}- z_2)\\
& \qquad \qquad \times
  \ldots \int_0^{r_{k-1}} \d r_{k}\int_0^L\d z_{k}\, \bm{p}^2_{r_{k-1}-r_k}(z_{k-1}- z_k)\\
&=  \frac{R_T}{L}\,\sum_{n\in \Z}
  \int_{0<r_k<\cdots < r_1<t}
  \d r_1\ldots \d r_k
  \frac{1}{\sqrt{(r_1-r_2)\times \ldots\times (r_{k-1}-r_k)}}  \int_{\R}\d z_1\ldots\int_{\R}\d z_{k-1}    \int_0^L\d z_{k}\\
  &\qquad \qquad \times 
    \, \bm{p}_{(t-r_1)/2}(z_1+ nL) \ldots \bm{p}_{(r_{k-2}-r_{k-1})/2}(z_{k-2}- z_{k-1}) 
 \bm{p}_{(r_{k-1}-r_k)/2}(z_{k-1}- z_k)
 \end{align*}
 where $R_T>0$ depends only on $T$.
 Hence, we apply the semigroup property to obtain that 
 \begin{align*}
| J_{k,1}^{(3)}|
&\leq
  \frac{R_T}{L}  \int_{0<r_k<\cdots < r_1<t}
  \d r_1\ldots \d r_k
  \frac{1}{\sqrt{(r_1-r_2)\times \ldots\times (r_{k-1}-r_k)}} \sum_{n\in \Z} \int_0^L\d z_{k} \, \bm{p}_{(t-r_k)/2}( z_k + nL)\\
  & =  \frac{R_T}{L}
  \int_{0<r_k<\cdots < r_1<t}
  \d r_1\ldots \d r_k
  \frac{1}{\sqrt{(r_1-r_2)\times \ldots\times (r_{k-1}-r_k)}},
\end{align*}         
which implies that         $\lim_{L\to \infty}  J_{k,1}^{(3)}=0$.

  The proceeding computation yields that 
  \begin{align*}
\frac{1}{L}\int_0^L\| \mathcal{I}_k(t\,, x) \|_2^2\, \d x&=J_{k,1}^{(1)}+  o(L), \quad \text{as $L\to\infty$}.
\end{align*}

Similarly,  using Lemma \ref{hkproperty} (3) to integrate the integral with respect to $\d z_1$ in the expression of $J_{k,1}^{(1)}$, we can write
\begin{align*}
J_{k,1}^{(1)}= J_{k,2}^{(1)} +J_{k,2}^{(2)} + J_{k,2}^{(3)}, 
\end{align*}
 where 
 \begin{align*}
J_{k,2}^{(1)}&= 
\frac{1}{L}
\int_0^t\d r_1\int_0^{r_1}\d r_2\, \bm{p}_{2(t-r_1)}(0) \bm{p}_{2(r_1-r_2)}(0)  \int_0^L\d z_2  \int_0^{r_{2}} \d r_{3}\int_0^L\d z_{3}\, G^2_{r_{1}-r_2}(z_{2}, z_3)\\
&\qquad  \qquad \times
  \ldots \times \int_0^{r_{k-1}} \d r_{k}\int_0^L\d z_{k}\, G^2_{r_{k-1}-r_k}(z_{k-1}, z_k)\mathcal{I}^2_0(r_k\,, z_k),\\
J_{k,2}^{(2)}&= 
\frac{1}{L}
\int_0^t\d r_1\int_0^{r_1}\d r_2\, \bm{p}_{2(t-r_1)}(0)\sum_{n\neq 0} \bm{p}_{2(r_1-r_2)}(2nL)  \int_0^L\d z_2  \int_0^{r_{2}} \d r_{3}\int_0^L\d z_{3}\, G^2_{r_{1}-r_2}(z_{2}, z_3)\\
&\qquad  \qquad \times
  \ldots \times \int_0^{r_{k-1}} \d r_{k}\int_0^L\d z_{k}\, G^2_{r_{k-1}-r_k}(z_{k-1}, z_k)\mathcal{I}^2_0(r_k\,, z_k),\\
J_{k,2}^{(3)}&= 
\pm
\frac{1}{L}
\int_0^t\d r_1\int_0^{r_1}\d r_2\, \bm{p}_{2(t-r_1)}(0)   \int_0^L\d z_2\, \bm{p}_{2(r_1-r_2)}(2z_2+2nL)  \int_0^{r_{2}} \d r_{3}\int_0^L\d z_{3}\, G^2_{r_{1}-r_2}(z_{2}, z_3)\\
&\qquad  \qquad \times
  \ldots \times \int_0^{r_{k-1}} \d r_{k}\int_0^L\d z_{k}\, G^2_{r_{k-1}-r_k}(z_{k-1}, z_k)\mathcal{I}^2_0(r_k\,, z_k),
\end{align*}
where, in the definition of $  J_{k,2}^{(3)}$, the sign "$+$" ("$-$" respectively) corresponds  to Neumann (Dirichlet) heat kernel. 
 Using the same arguments as before, we obtain that $\lim_{L\to\infty}J_{k,2}^{(2)}=0$ and    $\lim_{L\to\infty}J_{k,2}^{(3)}=0$ and hence 
     \begin{align*}
\frac{1}{L}\int_0^L\| \mathcal{I}_k(t\,, x) \|_2^2\, \d x&=J_{k,2}^{(1)}+  o(L), \quad \text{as $L\to\infty$}.
\end{align*}
     
Therefore,  we can repeat this procedure to  conclude that as $L\to\infty$,
\begin{align*}
\frac{1}{L}\int_0^L\| \mathcal{I}_k(t\,, x) \|_2^2\, \d x& = o(L) +
  \int_{0<r_k<\cdots < r_1<t}
  \d r_1\ldots \d r_k\, \bm{p}_{2(t-r_1)}(0) \ldots  \bm{p}_{2(r_{k-2}-r_{k-1})}(0)\\
  & \qquad 
  \qquad   \qquad \times 
  \frac{1}{L}\int_{0}^L\d z_k\, G_{2(r_{k-1}-r_k)}(z_k, z_k) \mathcal{I}^2_0(r_k\,, z_k).
  \end{align*}
  Moreover, we use the expression of heat kernel for $G_{2(r_{k-1}-r_k)}(z_{k}, z_k)$ in \eqref{heatkernel1} and \eqref{heatkernel2} to decompose the multiple integral above as 
  \begin{align*}
  & \int_{0<r_k<\cdots < r_1<t}
  \d r_1\ldots \d r_k\, \bm{p}_{2(t-r_1)}(0) \ldots \bm{p}_{2(r_{k-1}-r_{k})}(0)
    \frac{1}{L}\int_{0}^L\d z_k\,  \mathcal{I}^2_0(r_k\,, z_k)
  \\
  &  + \int_{0<r_k<\cdots < r_1<t}
  \d r_1\ldots \d r_k\, \bm{p}_{2(t-r_1)}(0)\ldots  \bm{p}_{2(r_{k-2}-r_{k-1})}(0)  \sum_{n\neq0}\bm{p}_{2(r_{k-1}-r_{k})}(2nL)
      \frac{1}{L}\int_{0}^L\d z_k\,  \mathcal{I}^2_0(r_k\,, z_k)
  \\
  &   \pm \int_{0<r_k<\cdots < r_1<t}
  \d r_1\ldots \d r_k\, \bm{p}_{2(t-r_1)}(0) \ldots \bm{p}_{2(r_{k-2}-r_{k-1})}(0)
      \frac{1}{L}\int_{0}^L\d z_k\, \sum_{n\in \Z}\bm{p}_{2(r_{k-1}-r_{k})}(2z_k +2nL)  \mathcal{I}^2_0(r_k\,, z_k)
\end{align*}
where in the last line the sign "$+$" ("$-$" respectively) corresponds  to Neumann (Dirichlet) heat kernel. 
By \eqref{sup}, similar estimate as in \eqref{dominated} and dominated convergence theorem,  the second term above converges to 0 as $L\to\infty$.  Similarly, the third term above also converges to 0 as $L\to\infty$ since
by \eqref{sup} and \eqref{identity}
\begin{align*}
      \frac{1}{L}\int_{0}^L\d z_k\, \sum_{n\in \Z}\bm{p}_{2(r_{k-1}-r_{k})}(2z_k +2nL)  \mathcal{I}^2_0(r_k\,, z_k)
      \leq \frac{1}{2L}\int_{0}^L\d z_k\, \sum_{n\in \Z}\bm{p}_{(r_{k-1}-r_{k})/2}(z_k +nL)=\frac{1}{2L}.
\end{align*}

Therefore, using \eqref{sup}, dominated convergence theorem and \eqref{eq:I_0}, we conclude that
\begin{align}
&\lim_{L\to\infty}\frac{1}{L}\int_0^L\| \mathcal{I}_k(t\,, x) \|_2^2\, \d x\nonumber\\
&\quad = \int_{0<r_k<\cdots < r_1<t}
  \d r_1\ldots \d r_k\, \bm{p}_{2(t-r_1)}(0) \ldots  \bm{p}_{2(r_{k-1}-r_{k})}(0)\lim_{L\to\infty}    \frac{1}{L}\int_{0}^L\d z_k\,  \mathcal{I}^2_0(r_k\,, z_k)
\nonumber\\
&\quad = \int_{0<r_k<\cdots < r_1<t}
  \d r_1\ldots \d r_k\, \bm{p}_{2(t-r_1)}(0) \ldots  \bm{p}_{2(r_{k-1}-r_{k})}(0)\nonumber \\
  &\quad = \frac{(t/4)^{k/2}}{\Gamma((k+2)/2)}, \label{id}
\end{align}
 where the third equality  is due to change of variables and \eqref{gamma3}.  This proves \eqref{I_k:average}.
\end{proof}

\begin{proposition}\label{secondp}
           Let $\mathcal{I}_k$ be as in \eqref{I}. In the case of periodic boundary conditions, for every fixed $t>0$, $\|\mathcal{I}_k(t\,, x)\|_2^2$ does not 
           depend on $x\in [0, L]$ and 
           \begin{align}\label{id2}
           \lim_{L\to\infty}\|\mathcal{I}_k(t\,, x)\|_2^2 = \frac{(t/4)^{k/2}}{\Gamma((k+2)/2)}.
           \end{align}
\end{proposition}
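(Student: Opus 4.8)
The plan is to exploit two features special to the periodic setting that make this cleaner than the Neumann/Dirichlet computation in Proposition~\ref{wienerasym}. First, the period-$L$ heat kernel conserves mass over a full period, so that
\[
\mathcal{I}_0(t,x)=\int_0^L G_t(x,y)\,\d y = \int_\R \bm{p}_t(x-y)\,\d y = 1 \qquad \text{for all } (t,x);
\]
second, by translation invariance $G_{2s}(z,z)=\sum_{n\in\Z}\bm{p}_{2s}(nL)$ does not depend on $z$. I would first record these two facts (the $x$-independence asserted in the statement also follows from the stationarity noted in Remark~\ref{stationarity}, but the direct computation below reproves it and simultaneously yields the value).

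Next I would evaluate $\|\mathcal{I}_k(t,x)\|_2^2$ from the chaos norm \eqref{I2} by integrating the spatial variables from the innermost $z_k$ outward to $z_1$. Since $\mathcal{I}_0^2\equiv 1$, the innermost integral is $\int_0^L G^2_{r_{k-1}-r_k}(z_{k-1},z_k)\,\d z_k = G_{2(r_{k-1}-r_k)}(z_{k-1},z_{k-1})$ by the semigroup property \eqref{semigroup} and symmetry of the kernel, and this equals $G_{2(r_{k-1}-r_k)}(0,0)$, a constant in $z_{k-1}$ that factors out of the remaining integrals. Iterating, each of the $k$ spatial integrations collapses one squared kernel into a diagonal factor $G_{2(\cdot)}(0,0)$; the last integration $\int_0^L G^2_{t-r_1}(x,z_1)\,\d z_1 = G_{2(t-r_1)}(x,x)=G_{2(t-r_1)}(0,0)$ removes the dependence on $x$ entirely. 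This yields the exact identity
\[
\|\mathcal{I}_k(t,x)\|_2^2 = \int_{0<r_k<\cdots<r_1<t}\d r_1\cdots\d r_k\; G_{2(t-r_1)}(0,0)\prod_{i=1}^{k-1}G_{2(r_i-r_{i+1})}(0,0),
\]
which is manifestly independent of $x$, proving the first assertion.

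Finally I would let $L\to\infty$. For each fixed $s>0$ we have $G_{2s}(0,0)=\bm{p}_{2s}(0)+\sum_{n\neq 0}\bm{p}_{2s}(nL)\to \bm{p}_{2s}(0)$, and for $s\in(0,t)$ the off-diagonal sum is bounded uniformly in $L\ge 1$ by $\bm{p}_{2s}(0)\sum_{n\neq 0}\e^{-n^2/(4t)}$, in the same manner as \eqref{dominated}; hence $G_{2(\cdot)}(0,0)\le C_t\,\bm{p}_{2(\cdot)}(0)$ with a finite constant $C_t$, so the product over the simplex is dominated by an integrable function. By dominated convergence the limit of $\|\mathcal{I}_k(t,x)\|_2^2$ equals the same integral with every $G_{2(\cdot)}(0,0)$ replaced by $\bm{p}_{2(\cdot)}(0)=(4\pi\,\cdot)^{-1/2}$. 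Rescaling $r_i=t\rho_i$ and invoking the Beta-integral identity \eqref{gamma3} then gives $(4\pi)^{-k/2}t^{k/2}\Gamma(1/2)^k/\Gamma((k+2)/2)=(t/4)^{k/2}/\Gamma((k+2)/2)$, which is \eqref{id2}.

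The computation is routine once the two structural observations are in place; the only step needing genuine care is the interchange of limit and integration, i.e.\ producing the uniform-in-$L$ domination $G_{2s}(0,0)\le C_t\,\bm{p}_{2s}(0)$ so that dominated convergence applies on the time simplex. Compared with Proposition~\ref{wienerasym}, the periodic case is in fact simpler: because $\mathcal{I}_0\equiv 1$ and $G_{2s}(z,z)$ is independent of $z$, the diagonal factors pull straight out and there are no reflection/boundary terms of the type $J_{k,\cdot}^{(2)},J_{k,\cdot}^{(3)}$ to dispose of.
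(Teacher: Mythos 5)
Your proposal is correct and follows essentially the same route as the paper: collapse the spatial integrals from $z_k$ outward via symmetry, the semigroup property and translation invariance of the periodic kernel to get the $x$-free simplex integral of the diagonal factors $G_{2(\cdot)}(0,0)$, then pass to the limit by dominated convergence and evaluate via the Beta-type identity \eqref{gamma3}. The only cosmetic differences are that you spell out the uniform-in-$L$ domination $G_{2s}(0,0)\le C_t\,\bm{p}_{2s}(0)$ (which the paper leaves implicit in its appeal to dominated convergence) and evaluate the limiting integral directly rather than citing the identity \eqref{id} from Proposition \ref{wienerasym}.
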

\begin{proof}
         Since $\mathcal{I}_0(t\,, x)\equiv 1$,   the formula \eqref{I2} yields that 
         \begin{align*}
\| \mathcal{I}_k(t\,, x) \|_2^2&= \int_0^t\d r_1\int_0^L\d z_1\, G^2_{t-r_1}(x, z_1)
 \ldots \int_0^{r_{k-1}} \d r_{k}\int_0^L\d z_{k}\, G^2_{r_{k-1}-r_k}(z_{k-1}, z_k)\\
 &= \int_{0<r_k<\cdots < r_1<t}
  \d r_1\ldots \d r_k\, G_{2(t-r_1)}(0, 0) \ldots  G_{2(r_{k-1}-r_{k})}(0, 0),
\end{align*}
where in the second equality, we integrate in the order from $\d z_k$ to $\d z_1$ and use semigroup property of heat kernel and the expression of periodic heat kernel in \eqref{Pheat}. 
Moreover, by dominated convergence theorem, 
 \begin{align*}
           \lim_{L\to\infty}\|\mathcal{I}_k(t\,, x)\|_2^2 &= \int_{0<r_k<\cdots < r_1<t}
  \d r_1\ldots \d r_k\, \lim_{L\to\infty}G_{2(t-r_1)}(0, 0) \ldots  G_{2(r_{k-1}-r_{k})}(0, 0)\\
& = \int_{0<r_k<\cdots < r_1<t}
  \d r_1\ldots \d r_k\, \bm{p}_{2(t-r_1)}(0) \ldots  \bm{p}_{2(r_{k-1}-r_{k})}(0) \\
  &= \frac{(t/4)^{k/2}}{\Gamma((k+2)/2)},
           \end{align*}
           where the last equality holds by \eqref{id}. This proves \eqref{id2}.
\end{proof}

We are now ready to prove Proposition \ref{pr:Cov:asymp2}.
\begin{proof}[Proof of Proposition \ref{pr:Cov:asymp2}]
We first consider the Neumann/Dirichlet case. 
          By \eqref{wiener2} and Fubini's theorem, 
          	\begin{align*}
	 \frac{1}{L}\int_0^L\E[u(t\,,x)^2] \, \d x =\sum_{k=0}^{\infty} 	 \frac{1}{L}\int_0^L \|\mathcal{I}_k(t, x)\|_2^2\, \d x.
	\end{align*}
	Since the series $\sum_{k=0}^{\infty}\frac{K_T^{2k} 4^{-k/2}T^{k/2}}{\Gamma((k+2)/2)}$ converges, by \eqref{I_bound} and dominated convergence theorem, we have
	          	\begin{align*}
	\lim_{L\to\infty} \frac{1}{L}\int_0^L\E[u(t\,,x)^2] \, \d x& =\sum_{k=0}^{\infty} 	 	\lim_{L\to\infty}\frac{1}{L}\int_0^L \|\mathcal{I}_k(t, x)\|_2^2\, \d x=\sum_{k=0}^{\infty} \frac{(t/4)^{k/2}}{\Gamma((k+2)/2)}\\
	&=\sum_{n=1}^{\infty} \frac{(t/4)^{(n-1)/2}}{\Gamma((n+1)/2)}\\
	&= 2\e^{t/4} \int_{-\infty}^{\sqrt{t/2}}\frac{1}{\sqrt{2\pi}}\e^{-y^2/2}\d y
	=f(t),
	\end{align*}
	where the second equality holds by \eqref{I_k:average}, and in the fourth equality 
	 we apply the identity  (see \cite[Lemma 2.3.4]{Che13})
	\begin{align}\label{series}
	\sum_{n=1}^{\infty} \frac{\lambda^{n-1}}{\Gamma((n+1)/2)} = 2\e^{\lambda^2} \int_{-\infty}^{\sqrt{2}\lambda}\frac{1}{\sqrt{2\pi}}\e^{-y^2/2}\d y, \qquad \text{for all $\lambda\geq 0$,}
	\end{align}
         with $\lambda = \sqrt{t/4}$.	This completes the proof of \eqref{var:f} for Neumann/Dirichlet boundary conditions.
         
         Similarly, in the case of periodic boundary conditions, Proposition \ref{secondp} implies that $\E[u(t\,,x)^2]$ does not depend on $x\in [0, L]$ (see also Remark \ref{stationarity}). Hence
         	          	\begin{align*}
	\lim_{L\to\infty} \frac{1}{L}\int_0^L\E[u(t\,,x)^2] \, \d x&=\lim_{L\to\infty} \E[u(t\,,x)^2] = \sum_{k=0}^{\infty} 	 	\lim_{L\to\infty} \|\mathcal{I}_k(t, x)\|_2^2\\
	&=\sum_{n=1}^{\infty} \frac{(t/4)^{(n-1)/2}}{\Gamma((n+1)/2)}
	=f(t),
	\end{align*}
	where in the second equality, we apply dominated convergence theorem in order to exchange the limit and the sum. The proof is complete.
\end{proof}

\begin{remark}
The result of Proposition \ref{pr:Cov:asymp2}
can also be seen from the mild form \eqref{mild}. Indeed, by Ito's isometry and Lemma \ref{hkproperty} (1), (3),
	\begin{align}\label{renew2}
	 \frac{1}{L}\int_0^L\E[u(t\,,x)^2] \, \d x =\frac{1}{L}\int_0^L\mathcal{I}^2_0(t\,, x)\, \d x + \int_0^t \d s\, \frac{1}{L} \int_0^LG_{2(t-s)}(y, y)\E[u(s\,,y)^2]\, \d y
	\end{align}
	For every $t>0$, the existence of $\lim_{L\to\infty}\frac{1}{L}\int_0^L\E[u(t\,,x)^2] \, \d x$ is justified by Propositions \ref{wienerasym} and \ref{secondp}. Moreover, as in the proof of  Proposition \ref{wienerasym}, the dominating term of $G_{2(t-s)}(y, y)$ is $\bm{p}_{2(t-s)}(0)$ as $L\to\infty$. Therefore, by \eqref{eq:um2}, dominated convergence theorem and \eqref{eq:I_0}, we let $L\to \infty$ in \eqref{renew2} to obtain that  $\lim_{L\to\infty}\frac{1}{L}\int_0^L\E[u(t\,,x)^2] \, \d x$ satisfies the 
	renewal equation: for all $t>0$
\begin{align}\label{renew1}
f(t)= 1 + \int_0^t\frac{f(s)}{\sqrt{4\pi(t-s)}}\,\d s,
\end{align}
which admits a unique solution given by the formula in \eqref{f}.
\end{remark}

\begin{remark}
It is clear that the renewal equation \eqref{renew1} also holds for the function $t\mapsto \E[U(t\,, 0)^2]$, where $U$ solves \eqref{PAM2}. In fact, by Ito's isometry and stationarity
\begin{align*}
 \E[U(t\,, 0)^2] &= 1+ \int_0^t\int_{\R}\bm{p}_{t-s}^2(x-y) \E[U(t\,, y)^2]\, \d y\d s\\
 &= 1+ \int_{0}^t\bm{p}_{2(t-s)}(0) \E[U(t\,, 0)^2]\, \d s,
\end{align*}
thanks to semigroup property. We refer to \cite[Chapter 7]{Kho14} for more information on renewal theory related to stochastic heat equation. 
\end{remark}

\section{Proof of Theorems \ref{th:FCLT} and \ref{TVD}}\label{Sec:TVD}

In this section, we will  apply Propositions \ref{pr:MS} and \ref{lemma: NP 6.1.2} to prove Theorems \ref{th:FCLT} and \ref{TVD}.

Recall $\mathcal{S}_{L, t}$ defined in \eqref{average}. Using stochastic Fubini's theorem, we write from \eqref{mild} that 
\begin{equation}\label{S=delta(v)}
	\mathcal{S}_{L,t} = \int_{0}^t\int_0^L v_{L,t}(s\,,y)\,\eta(\d s\,\d y)
	=\delta(v_{L,t})\qquad\text{a.s.,}
\end{equation}
where 
\begin{align}\label{g+v}
	v_{L,t}(s\,,y) &:=L^{-1} \bm{1}_{(0, t)}(s)\bm{1}_{[0, L]}(y)\, u(s\,, y) \int_0^LG_{t-s}(x, y)\, \d x\nonumber\\
	&=L^{-1} \bm{1}_{(0, t)}(s)\bm{1}_{[0, L]}(y) u(s\,, y)\, \mathcal{I}_0(t-s\,, y),
\end{align}
where  in \eqref{g+v} we use Lemma \ref{hkproperty} (1) and the definition of $\mathcal{I}_0(t\,, x)$ in \eqref{I_0}.

The key technical result of this section is the following proposition:
\begin{proposition}\label{Var<Ds,v>}
	For every $T>0$ there exists a real number $A_T>0$ such that
	\[
		\sup_{t,\tau\in[0,T]}
		\Var\left( \left\< D\mathcal{S}_{L,t}\,,v_{L,\tau}\right>_{\HH} \right)
		\le \frac{ A_T}{L^3}\qquad\text{for all $L\ge 1$}.
	\]
\end{proposition}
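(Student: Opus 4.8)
The plan is to apply the Poincar\'e-type inequality \eqref{Poincare:Cov} (with $F=G$) to the random variable
\[
	F:=\langle D\mathcal{S}_{L,t}\,,v_{L,\tau}\rangle_{\HH}
	=\frac{1}{L^2}\int_0^{t\wedge\tau}\d s\int_0^L\d y\,\mathcal{I}_0(\tau-s,y)\,u(s,y)\int_0^LD_{s,y}u(t,x)\,\d x,
\]
where I have used $D_{s,y}\mathcal{S}_{L,t}=\tfrac1L\int_0^LD_{s,y}u(t,x)\,\d x$, the definition \eqref{g+v} of $v_{L,\tau}$, and the facts that $D_{s,y}u(t,x)=0$ for $s>t$ while $v_{L,\tau}$ carries the factor $\bm{1}_{(0,\tau)}(s)$. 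Thus $\Var(F)\le\int_0^T\d r\int_\R\d w\,\|D_{r,w}F\|_2^2$, and the Leibniz rule for $D$ applied to the product $u(s,y)\,D_{s,y}u(t,x)$ gives $D_{r,w}F=\mathrm{I}_{r,w}+\mathrm{II}_{r,w}$, where $\mathrm{I}_{r,w}$ carries the first derivative $D_{r,w}u(s,y)$ and $\mathrm{II}_{r,w}$ carries the second derivative $D_{r,w}D_{s,y}u(t,x)$. The goal is to bound each of $\int\!\!\int\|\mathrm{I}_{r,w}\|_2^2$ and $\int\!\!\int\|\mathrm{II}_{r,w}\|_2^2$ by a constant times $L^{-3}$.

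For the first term, I would bound, using $\mathcal{I}_0\le1$ (Lemma \ref{I_0asym}), Minkowski's inequality, and the Cauchy--Schwarz inequality in $\Omega$ together with Lemma \ref{derivative:estimate},
\[
	\|\mathrm{I}_{r,w}\|_2\le\frac{C_{T,4}^2}{L^2}\,\bm{1}_{[0,L]}(w)\int_r^{t\wedge\tau}\d s\int_0^L\d y\,\bm{p}_{s-r}(y-w)\int_0^L\d x\,\bm{p}_{t-s}(x-y)
\]
in the Neumann/Dirichlet case. The inner $\d x$--integral is at most $1$; squaring, integrating $\d w$ over $\R$ first and using the Gaussian semigroup identity $\int_\R\bm{p}_{s-r}(y-w)\bm{p}_{s'-r}(y'-w)\,\d w=\bm{p}_{s+s'-2r}(y-y')$, the $w$--integration produces only an $O(1)$ factor (this is the crucial point: the Poincar\'e variable $w$ does not generate a second power of $L$), while the surviving integral $\int_0^L\d y\int_0^L\d y'\,\bm{p}_{s+s'-2r}(y-y')\le L$ contributes exactly one factor of $L$. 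Hence $\int\!\!\int\|\mathrm{I}_{r,w}\|_2^2\,\d r\,\d w\le A_T'/L^3$, uniformly in $t,\tau\in[0,T]$ and $L\ge1$.

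The second term requires an estimate on the second Malliavin derivative. Differentiating \eqref{derivative} once more and iterating exactly as in the proof of Lemma \ref{derivative:estimate} (the analogue, uniform in $L\ge1$, of \cite[Theorem 6.4]{CKNP}), one obtains a Gaussian-type bound of the form $\|D_{r,w}D_{s,y}u(t,x)\|_k\lesssim \bm{1}_{[0,L]}(y)\bm{1}_{[0,L]}(w)\,\bm{p}_{t-(s\vee r)}(\cdot)\,\bm{p}_{|s-r|}(\cdot)$ in the Neumann/Dirichlet case (and the corresponding bound with $G$ in the periodic case). Feeding this into $\mathrm{II}_{r,w}$, together with $\mathcal{I}_0\le1$ and the uniform moment bound \eqref{eq:um2}, and performing the same convolution bookkeeping---integrate the Poincar\'e variable $w$ out to $O(1)$ via \eqref{element}, integrate $\d x$ out against one heat kernel, and bound the surviving $\int_0^L\d y$ by $L$---yields $\int\!\!\int\|\mathrm{II}_{r,w}\|_2^2\le A_T''/L^3$. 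Combining the two terms gives the claim with $A_T=2(A_T'+A_T'')$. The periodic case is entirely analogous, replacing $\bm{p}$ by the periodic kernel $G$ and invoking the integrability \eqref{integrable} in place of the Gaussian semigroup computations.

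The main obstacle is twofold. First, one must establish the second-order derivative estimate uniformly in $L$, which forces a repetition of the Picard/iteration argument of Lemma \ref{derivative:estimate} at the level of the second derivative. Second, the heat-kernel bookkeeping must be arranged so that exactly one spatial integration survives as a factor $L$ while the Poincar\'e integration variable $(r,w)$ is absorbed into an $O(1)$ constant. It is precisely this cancellation of powers of $L$---the prefactor $L^{-4}$ coming from $\|D_{r,w}F\|_2^2$ against a single $\int_0^L\d y\le L$---that produces the sharp rate $L^{-3}$; any crude handling of the $w$--integration (bounding $\int_0^L\bm{p}_{s-r}(y-w)\,\d y$ by $1$ and then integrating $w$ over all of $\R$ without using the convolution identity) would cost an extra factor of $L$ and yield only $L^{-2}$.
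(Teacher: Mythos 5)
Your proposal is correct in outline but follows a genuinely different route from the paper, and the difference has a real cost. You apply the Poincar\'e inequality \eqref{Poincare:Cov} directly to the random variable $\langle D\mathcal{S}_{L,t}\,,v_{L,\tau}\rangle_{\HH}$, which via the Leibniz rule forces you to control the \emph{second} Malliavin derivative $D_{r,w}D_{s,y}u(t\,,x)$ uniformly in $L\geq 1$ --- an estimate the paper never establishes (Lemma \ref{derivative:estimate} is first-order only). The paper instead avoids second derivatives altogether: starting from $\mathcal{S}_{L,t}=\delta(v_{L,t})$, it uses the commutation relation $D\delta(v)=v+\delta(Dv)$ (equation \eqref{DS}) and stochastic Fubini to write $\langle D\mathcal{S}_{L,t}\,,v_{L,\tau}\rangle_{\HH}=\langle v_{L,t}\,,v_{L,\tau}\rangle_{\HH}+\int_0^t\int_0^L(\cdots)\,\eta(\d s\,\d y)$, then bounds the variance of the first term by Poincar\'e applied to $\Cov(u^2,u^2)$ (chain rule, hence only first derivatives) and the variance of the second term by It\^o's isometry --- again only first derivatives enter, because $D_{r,z}v_{L,t}(s\,,y)=L^{-1}\mathcal{I}_0(t-s\,,y)\,D_{r,z}u(s\,,y)$, the factor $\mathcal{I}_0$ being deterministic. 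Your heat-kernel bookkeeping (prefactor $L^{-4}$, the Poincar\'e variable $w$ integrated out to $O(1)$ via the semigroup identity, one surviving spatial integral contributing $L$) is exactly the mechanism of the paper's proof (compare \eqref{bound12} and \eqref{bound2}), and you execute it correctly on your first term; so what your route buys is conceptual directness, while the paper's decomposition buys economy of hypotheses.

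The one substantive caveat is that your second term rests on an asserted second-order bound $\|D_{r,w}D_{s,y}u(t\,,x)\|_k\lesssim\bm{1}_{[0,L]}(y)\bm{1}_{[0,L]}(w)\,\bm{p}_{t-(s\vee r)}(\cdot)\,\bm{p}_{|s-r|}(\cdot)$, which is plausible and provable by differentiating \eqref{derivative} once more and rerunning the iteration (in the periodic case one would again need Lemma \ref{subsemi}, not \eqref{integrable}, which concerns moment bounds rather than the convolution step), but it is genuine additional work that must be carried out, uniformly in $L$, before your argument is complete; as written, this step is a sketch, and it also implicitly requires $u(t\,,x)\in\mathbb{D}^{2,4}$ to justify the Leibniz rule. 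Since the paper's own proof shows the estimate \eqref{TVDeq} can be reached without any second-derivative input, you should either prove that auxiliary lemma in full or adopt the decomposition \eqref{DS:v:X:Y}, which renders it unnecessary.
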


\begin{proof}
According to Proposition 1.3.2 of \cite{Nualart}, we see from \eqref{S=delta(v)} that 
\begin{equation}\label{DS}
	D_{r,z}\mathcal{S}_{L,t} = \bm{1}_{(0,t)}(r) v_{L,t}(r\,,z) + 
	\bm{1}_{(0,t)}(r) \int_r^t\int_0^LD_{r,z}v_{L,t}(s\,,y)\,\eta(\d s\,\d y).
\end{equation}
Hence,
\begin{align}\label{DS:v:X:Y}
	\left\< D\mathcal{S}_{L,t}\,, v_{L,\tau}\right\>_{\HH} &= 
 \left\< v_{L,t}\,,v_{L,\tau}\right\>_{\HH}  +  \int_0^{\tau}\d r\int_{0}^L\d z\
		v_{L,\tau}(r\,,z)\left( \int_{r}^t\int_0^L D_{r,z} v_{L,t}(s\,,y)\,\eta(\d s\,\d y)\right)\nonumber\\
	&=  \left\< v_{L,t}\,,v_{L,\tau}\right\>_{\HH}  +  \int_{0}^t\int_0^L \left( \int_0^{\tau \wedge s}\d r\int_{0}^L\d z\
		v_{L,\tau}(r\,,z) D_{r,z} v_{L,t}(s\,,y)\right)\,\eta(\d s\,\d y),
\end{align}
where in the second equality we use the stochastic Fubini's theorem. Therefore, 
\begin{align}\label{var1+2}
	\Var\left(\left\< D\mathcal{S}_{L,t}\,, v_{L,\tau}\right\>_{\HH}\right) &\leq 2\left(\Phi^{(1)}_{L, t,\tau} +\Phi^{(2)}_{L, t,\tau}  \right),
\end{align}
where
\begin{align}\label{12}
\Phi^{(1)}_{L, t,\tau} &=\Var\left( \left\< v_{L,t}\,,v_{L,\tau}\right\>_{\HH}  \right),\\
\Phi^{(2)}_{L, t,\tau} &=\Var\left(  \int_{0}^t\int_0^L \left( \int_0^{\tau \wedge s}\d r\int_{0}^L\d z\
		v_{L,\tau}(r\,,z) D_{r,z} v_{L,t}(s\,,y)\right)\,\eta(\d s\,\d y)  \right). \label{21}
		\end{align}

We estimate the two quantities $\Phi^{(1)}_{L, t,\tau}$ and $\Phi^{(2)}_{L, t,\tau}$ separately.  Using the expression in \eqref{g+v}, 
             \begin{align*}
             \Phi^{(1)}_{L, t,\tau} &=\frac{1}{L^{4}}\int_{[0, t\wedge \tau]^2}\d s_1\d s_2 \int_{[0, L]^2}\d y_1\d y_2\, 
             \Cov\left(u^2(s_1\,,y_1)\,, u^2(s_2\,, y_2)\right)\\
             & \qquad \qquad \times \mathcal{I}_0(t-s_1, y_1) \mathcal{I}_0(\tau-s_1, y_1)
              \mathcal{I}_0(t-s_2, y_2) \mathcal{I}_0(\tau-s_2, y_2)
             \end{align*}
             By \eqref{sup}, Poincar\'e inequality \eqref{Poincare:Cov} and the chain rule of Malliavin derivative
              (see \cite[Proposition 3.3.2]{NN}), 
              \begin{align}
             \Phi^{(1)}_{L, t,\tau} &\leq \frac{4}{L^{4}}\int_{[0, t\wedge \tau]^2}\d s_1\d s_2 \int_{[0, L]^2}\d y_1\d y_2\, 
             \int_0^{s_1\wedge s_2}\d r\int_{\R}\d z\, \nonumber\\
             &\qquad \qquad \qquad \times \left\|u(s_1\,, y_1)D_{r, z}u(s_1\,, y_1) \right\|_2
             \left\|u(s_2\,, y_2)D_{r, z}u(s_2\,, y_2) \right\|_2 \nonumber\\
             &\leq \frac{4c_{T,4}^2}{L^{4}}\int_{[0, t\wedge \tau]^2}\d s_1\d s_2 \int_{[0, L]^2}\d y_1\d y_2
             \int_0^{s_1\wedge s_2}\d r\int_{\R}\d z
              \left\|D_{r, z}u(s_1\,, y_1) \right\|_4 \left\|D_{r, z}u(s_2\,, y_2) \right\|_4\label{malliavinnorm}
             \end{align}
             where we have used  H\"{o}lder's inequality and \eqref{eq:um2} in the second inequality.\\

             \noindent{\em{Neumann/Dirichlet case}}. By  Lemma \ref{derivative:estimate},
              \begin{align*}
             \Phi^{(1)}_{L, t,\tau} 
             &\leq \frac{4c_{T,4}^2C_{T,4}^2}{L^{4}}\int_{[0, t\wedge \tau]^2}\d s_1\d s_2 \int_{[0, L]^2}\d y_1\d y_2\, 
             \int_0^{s_1\wedge s_2}\d r\int_{\R}\d z\, \bm{p}_{s_1-r}(y_1-z)\bm{p}_{s_2-r}(y_2-z) \\
              &= \frac{4c_{T,4}^2C_{T,4}^2}{L^{4}}\int_{[0, t\wedge \tau]^2}\d s_1\d s_2 \int_{[0, L]^2}\d y_1\d y_2\, 
             \int_0^{s_1\wedge s_2}\d r\, \bm{p}_{s_1+s_2-2r}(y_1-y_2),
             \end{align*}
             where the equality holds by the semigroup property of heat kernel.  
             Denote 
           \begin{align}\label{I_L}
           I_L(x)= L^{-1}\bm{1}_{[0, L]}(x) \quad \text{ and} \quad 
          \tilde{I}_L(x)=I_L(-x) \qquad \text{ for $x\in \R$}. 
          \end{align}
             We write
                 \begin{align}\label{bound12}
             \Phi^{(1)}_{L, t,\tau} 
                           &\leq  \frac{4c_{T,4}^2C_{T,4}^2}{L^{2}}\int_{[0, t\wedge \tau]^2}\d s_1\d s_2 
             \int_0^{s_1\wedge s_2}\d r\, \left(I_L*\tilde{I}_L*\bm{p}_{s_1+s_2-2r}\right)(0)\nonumber\\
             &\leq  \frac{4c_{T,4}^2C_{T,4}^2}{L^{3}}\int_{[0, t\wedge \tau]^2}\d s_1\d s_2 
             \int_0^{s_1\wedge s_2}\d r \int_{-L}^{L}\bm{p}_{s_1+s_2-2r}(z)\, \d z\nonumber\\
             &\leq  \frac{4T^3c_{T,4}^2C_{T,4}^2}{L^{3}},
             \end{align}
             where, in the second inequality, we use  \cite[(3.17)]{CKNP}.\\
             
             \noindent{\em{Periodic case}}. We see from \eqref{malliavinnorm} and Lemma \ref{derivative:estimate} that
             \begin{align}
             \Phi^{(1)}_{L, t,\tau} 
             &\leq \frac{4c_{T,4}^2C_{T,4}^2}{L^{4}}\int_{[0, t\wedge \tau]^2}\d s_1\d s_2 \int_{[0, L]^2}\d y_1\d y_2\, 
             \int_0^{s_1\wedge s_2}\d r\int_{0}^L\d z\, G_{s_1-r}(y_1, z)G_{s_2-r}(y_2, z) 
            \nonumber\\
              &= \frac{4c_{T,4}^2C_{T,4}^2}{L^{3}}\int_{[0, t\wedge \tau]^2}\d s_1\d s_2 
             \int_0^{s_1\wedge s_2}\d r\nonumber\\
             &\leq  \frac{4T^3c_{T,4}^2C_{T,4}^2}{L^{3}},\label{bound11}
             \end{align}
             where the equality follows from \eqref{int=1}.

             We proceed to estimate $\Phi^{(2)}_{L, t,\tau}$. By Ito's isometry and the expression \eqref{g+v},  we see from \eqref{21} that 
             \begin{align}
             \Phi^{(2)}_{L, t,\tau} &= \int_{0}^t\int_0^L \left\|\int_0^{\tau \wedge s}\d r\int_{0}^L\d z\
		v_{L,\tau}(r\,,z) D_{r,z} v_{L,t}(s\,,y)\right\|_2^2\, \d y\d s\nonumber\\
		 &=\frac{1}{L^4} \int_{0}^t\d s\int_0^L\d y\int_{[0, \tau \wedge s]^2}\d r_1\d r_2\int_{[0, L]^2}\d z_1\d z_2\
		 		\mathcal{I}_0(\tau-r_1, z_1)  \mathcal{I}_0(\tau-r_2, z_2)\mathcal{I}^2_0(t-s, y)\nonumber
 \\
		 & \qquad \qquad 
		\E\left[u(r_1\,,z_1) (D_{r_1,z_1} u(s\,,y))\, u (r_2\,,z_2) D_{r_2,z_2} u(s\,,y)\right] \nonumber \\
		&\leq \frac{1}{L^4} \int_{0}^t\d s\int_0^L\d y\int_{[0, \tau \wedge s]^2}\d r_1\d r_2\int_{[0, L]^2}\d z_1\d z_2\ \nonumber
		 \\ 
		 & \qquad \qquad 
		\|u(r_1\,,z_1)\|_4\, \|D_{r_1,z_1} u(s\,,y)\|_4\, \|u (r_2\,,z_2)\|_4\, \|D_{r_2,z_2} u(s\,,y)\|_4\nonumber\\
		&\leq \frac{c_{T,4}^2}{L^4} \int_{0}^t\d s\int_0^L\d y\int_{[0, \tau \wedge s]^2}\d r_1\d r_2\int_{[0, L]^2}\d z_1\d z_2\
		  \|D_{r_1,z_1} u(s\,,y)\|_4\,  \|D_{r_2,z_2} u(s\,,y)\|_4, \label{2}
             \end{align}
             thanks to \eqref{sup}, H\"{o}lder's inequality and \eqref{eq:um2}.   \\

             \noindent{\em{Neumann/Dirichlet case}}. 
             We apply Lemma \ref{derivative:estimate} again
              in order to obtain that 
                           \begin{align*}
             \Phi^{(2)}_{L, t,\tau} 
             		&\leq \frac{c_{T,4}^2C_{T,4}^2}{L^4} \int_{0}^t\d s\int_0^L\d y\int_{[0, \tau \wedge s]^2}\d r_1\d r_2\int_{[0, L]^2}\d z_1\d z_2\, 
		\bm{p}_{s-r_1}(y-z_1)		\bm{p}_{s-r_2}(y-z_2)\\
		            &\leq \frac{c_{T,4}^2C_{T,4}^2}{L^4} \int_{0}^t\d s\int_{[0, \tau \wedge s]^2}\d r_1\d r_2\int_{[0, L]^2}\d z_1\d z_2\int_{\R}\d y\, 
		\bm{p}_{s-r_1}(y-z_1)		\bm{p}_{s-r_2}(y-z_2)\\
		&= \frac{c_{T,4}^2C_{T,4}^2}{L^4} \int_{0}^t\d s\int_{[0, \tau \wedge s]^2}\d r_1\d r_2\int_{[0, L]^2}\d z_1\d z_2\, 
		\bm{p}_{2s-r_1-r_2}(z_1-z_2),
             \end{align*}
             where we use semigroup property in the equality. We use again the functions $I_L$ and $\tilde{I}_L$
              in \eqref{I_L} and write 
                 \begin{align}
             \Phi^{(2)}_{L, t,\tau} \label{bound2}
             		&\leq\frac{c_{T,4}^2C_{T,4}^2}{L^2} \int_{0}^t\d s\int_{[0, \tau \wedge s]^2}\d r_1\d r_2
				\left(I_L*\tilde{I}_L*\bm{p}_{2s-r_1-r_2}\right)(0)\nonumber\\
			&\leq\frac{c_{T,4}^2C_{T,4}^2}{L^3} \int_{0}^t\d s\int_{[0, \tau \wedge s]^2}\d r_1\d r_2
			\int_{-L}^{L}\bm{p}_{2s-r_1-r_2}(z)\, \d z \nonumber\\
			&\leq\frac{T^3c_{T,4}^2C_{T,4}^2}{L^3},
             \end{align}
             where the second inequality follows from   \cite[(3.17)]{CKNP}.
             \\
             
             \noindent{\em{Periodic case}}. By \eqref{2} and Lemma \ref{derivative:estimate},
              \begin{align}
             \Phi^{(2)}_{L, t,\tau} 
             		&\leq \frac{c_{T,4}^2C_{T,4}^2}{L^4} \int_{0}^t\d s\int_0^L\d y\int_{[0, \tau \wedge s]^2}\d r_1\d r_2\int_{[0, L]^2}\d z_1\d z_2\, 
		G_{s-r_1}(y, z_1)		G_{s-r_2}(y, z_2) \nonumber\\
		           		&=\frac{T^3c_{T,4}^2C_{T,4}^2}{L^3}, \label{bound22}
             \end{align}
             where the second equality is due to  \eqref{int=1}.
             
             Finally, we combine \eqref{var1+2}, \eqref{bound12}, \eqref{bound11}, \eqref{bound2} and \eqref{bound22} to conclude the proof. 
\end{proof}

We are now ready to prove Theorem \ref{TVD}.

\begin{proof}[Proof of Theorem \ref{TVD}]
	We apply  Proposition \ref{Var<Ds,v>} with $t=\tau$ to 
	see that for all $T>0$ there exists $A_T>0$ 
	 such that for all $t\in [0, T]$
	\[
		\Var\left(\left< D\mathcal{S}_{L,t}\,,v_{L,t}\right>_\HH \right)\le 
		\frac{ A_T}{L^3}
		\qquad\text{for all $L\ge1$.}
	\]
	By \eqref{S=delta(v)} and Proposition \ref{pr:MS},
	\begin{align}
		d_{\rm TV}\left( \frac{\mathcal{S}_{L,t}}{\sqrt{\Var(\mathcal{S}_{L,t})}}
			~,~ {\rm N}(0, 1)\right) &\le 2\sqrt{\Var\left< \frac{D\mathcal{S}_{L,t}}{\sqrt{\Var(\mathcal{S}_{L,t})}}
			~,~ \frac{v_{L,t}}{\sqrt{\Var(\mathcal{S}_{L,t})}}\right>_{\HH}
			} \nonumber\\
		&\le 
			\frac{ 2\sqrt{A_T}}{L^{3/2}\Var(\mathcal{S}_{L,t})}\qquad\text{uniformly for all 
			$t\in[0\,,T]$ and $L\ge1$.} \label{tvdfin}
	\end{align}
	 Proposition \ref{pr:Cov:asymp} ensures that
	$\Var(\mathcal{S}_{L,t})\sim \int_0^tf(s)\, \d s/L$ as $L\to\infty$,  where the function $f$ is defined in \eqref{f}. 
	This together with \eqref{tvdfin} implies \eqref{TVDeq}.
\end{proof}

It remains to prove Theorem \ref{th:FCLT}, which consists of 
the weak convergence of the finite-dimensional distributions and tightness.
We establish  the tightness 
in the following proposition.

\begin{proposition}\label{pr:tightness}
	For every $T>0$ and $k\ge2$,
	there exists $\alpha_{T, k}>0$
	such that for all $t_1, t_2 \in [0\,,T]$,
	\begin{align}\label{holder}
				 \left\| \mathcal{S}_{L,t_2}-\mathcal{S}_{L,t_1}
		\right\|_k \le \alpha_{T, k}|t_2 -t_1|^{1/2} L^{-1/2}\qquad\text{uniformly for all $L\ge1$.}
	\end{align}
\end{proposition}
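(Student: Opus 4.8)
The plan is to realize the increment $\mathcal{S}_{L,t_2}-\mathcal{S}_{L,t_1}$ (assume without loss of generality that $0\le t_1<t_2\le T$) as a single Walsh stochastic integral and then apply Burkholder's inequality. Starting from \eqref{S=delta(v)}--\eqref{g+v} and adopting the convention that $v_{L,t_1}(s,y)=0$ for $s>t_1$ (which is already encoded in the factor $\bm{1}_{(0,t_1)}(s)$), I would write
\begin{align*}
\mathcal{S}_{L,t_2}-\mathcal{S}_{L,t_1}=\int_0^{t_2}\int_0^L\left[v_{L,t_2}(s,y)-v_{L,t_1}(s,y)\right]\eta(\d s\,\d y).
\end{align*}
Burkholder's inequality followed by Minkowski's inequality in $L^{k/2}(\Omega)$ then gives
\begin{align*}
\left\|\mathcal{S}_{L,t_2}-\mathcal{S}_{L,t_1}\right\|_k^2\le z_k^2\int_0^{t_2}\d s\int_0^L\d y\,\left\|v_{L,t_2}(s,y)-v_{L,t_1}(s,y)\right\|_k^2,
\end{align*}
where $z_k$ is the Burkholder constant, so the whole statement reduces to a deterministic estimate of the right-hand side.

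Next I would insert the explicit form \eqref{g+v}, bound $\|u(s,y)\|_k\le c_{T,k}$ via \eqref{eq:um2}, and split the $s$-integral over $(t_1,t_2)$ and $(0,t_1)$. On $(t_1,t_2)$ we have $v_{L,t_1}=0$, so using $\mathcal{I}_0\le 1$ from \eqref{sup} this contribution is at most $L^{-2}c_{T,k}^2\int_{t_1}^{t_2}\d s\int_0^L\d y=L^{-1}c_{T,k}^2(t_2-t_1)$, which is already of the required order. On $(0,t_1)$ the two terms share the common factor $L^{-1}\bm{1}_{[0,L]}(y)u(s,y)$, so after factoring it out and invoking \eqref{eq:um2} once more the contribution is controlled by
\begin{align*}
\frac{c_{T,k}^2}{L^2}\int_0^{t_1}\d s\int_0^L\d y\,\left|\mathcal{I}_0(t_2-s,y)-\mathcal{I}_0(t_1-s,y)\right|^2.
\end{align*}

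The crux is therefore the time-regularity of $\mathcal{I}_0$ defined in \eqref{I_0}. I would exploit two structural facts: $\mathcal{I}_0(t,y)\in[0,1]$ and $t\mapsto\mathcal{I}_0(t,y)$ is nonincreasing (it is the survival probability of the underlying diffusion; for Neumann and periodic boundary conditions it is in fact identically $1$, so this term vanishes outright, and only the Dirichlet case is genuinely at stake). Since the nonnegative difference $\mathcal{I}_0(t_1-s,y)-\mathcal{I}_0(t_2-s,y)$ lies in $[0,1]$, its square is dominated by itself; writing $M(r):=\int_0^L\mathcal{I}_0(r,y)\,\d y$ and substituting $r=t_1-s$, the spatial integral telescopes:
\begin{align*}
\int_0^{t_1}\d s\int_0^L\d y\,\left[\mathcal{I}_0(t_1-s,y)-\mathcal{I}_0(t_2-s,y)\right]=\int_0^{t_1}\left[M(r)-M\!\left(r+(t_2-t_1)\right)\right]\d r\le\int_0^{t_2-t_1}M(r)\,\d r.
\end{align*}
Because $M$ is nonincreasing with $M(0)=\int_0^L 1\,\d y=L$, the last integral is at most $(t_2-t_1)L$; this single bound covers all three boundary conditions (in the Neumann/periodic cases $M\equiv L$ and the left-hand side is in fact $0$). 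Combining this with the $(t_1,t_2)$ contribution yields $\|\mathcal{S}_{L,t_2}-\mathcal{S}_{L,t_1}\|_k^2\le\alpha_{T,k}^2\,(t_2-t_1)\,L^{-1}$, which is exactly \eqref{holder}. The main obstacle is precisely this Dirichlet estimate: the monotonicity-plus-telescoping trick is what converts the lack of an explicit handle on $\partial_t\mathcal{I}_0$ into a clean bound by the conserved initial mass $M(0)=L$. With \eqref{holder} in hand for $k>2$, tightness in $C[0,T]$ follows from the Kolmogorov--Chentsov criterion applied to $\sqrt{L}\,\mathcal{S}_{L,\bullet}$.
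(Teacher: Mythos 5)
Your proof is correct, and at the decisive step it takes a genuinely different route from the paper. Both arguments begin the same way: realize the increment as a single Walsh integral, apply Burkholder's and Minkowski's inequalities, bound $\|u(s,y)\|_k\le c_{T,k}$ via \eqref{eq:um2}, note that the contribution from $s\in(t_1,t_2)$ is at most $c_{T,k}^2|t_2-t_1|/L$, and observe that for Neumann/periodic conditions $\mathcal{I}_0\equiv 1$ kills the remaining term. The divergence is in how the Dirichlet quantity $J_2=\int_0^{t_1}\int_0^L\left[\mathcal{I}_0(t_2-s,y)-\mathcal{I}_0(t_1-s,y)\right]^2\d y\,\d s$ is estimated: the paper expands $\mathcal{I}_0$ in the sine series \eqref{heatkernel3}, uses $L^2([0,L])$-orthogonality of the eigenfunctions, and sums the resulting series to obtain $J_2\le\frac{88}{3\pi}|t_2-t_1|^{3/2}$, whereas you exploit that $t\mapsto\mathcal{I}_0(t,y)$ is $[0,1]$-valued and nonincreasing, dominate the square by the difference itself, and telescope to get $J_2\le|t_2-t_1|\,L$. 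Your bound is weaker in $L$ (the paper's is uniform in $L$), but since the prefactor is $L^{-2}$ both deliver exactly \eqref{holder}; what your argument buys is that it is more elementary—no spectral computation, no series to sum—and it treats all three boundary conditions in a single stroke. The one step you should make explicit is the monotonicity claim itself, since the alternating-sign formula \eqref{heatkernel2} does not make it obvious: either invoke the standard fact that the Dirichlet kernel is the transition density of Brownian motion killed at $\{0,L\}$, so $\mathcal{I}_0(t,y)=\P_y(\tau_{(0,L)}>t)$ is a survival probability, or argue analytically from nonnegativity of $G$, the semigroup property \eqref{semigroup} and the bound \eqref{sup}:
\[
\mathcal{I}_0(t+s,x)=\int_0^L G_t(x,z)\,\mathcal{I}_0(s,z)\,\d z\le\int_0^L G_t(x,z)\,\d z=\mathcal{I}_0(t,x).
\]
With that one-line justification supplied, your proof is complete and fully rigorous.
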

\begin{proof}
          From the express of $\mathcal{S}_{L,t}$ in \eqref{S=delta(v)},
          \begin{align}\label{fubini}
          \mathcal{S}_{L,t}           &=\frac{1}{L} \int_0^t\int_0^L\mathcal{I}_0(t-s\,, y)u(s\,,y)\, \eta(\d s\, \d y),
          \end{align}
 where $\mathcal{I}_0$ is defined in \eqref{I_0}.
          
          Assume $t_1\leq t_2$. 
          
          \textbf{Case 1}: Neumann/periodic boundary conditions.  In this case, we know that $\mathcal{I}_0\equiv 1$. Hence
          by \eqref{fubini}, Burkholder's inequality and Minkowski's inequality, for all $k\geq 2$,
          \begin{align*}
           \left\| \mathcal{S}_{L,t_2}-\mathcal{S}_{L,t_1}
		\right\|_k^2 \leq \frac{z_k^2}{L^2}\int_{t_1}^{t_2}\int_0^L\|u(s\,, y)\|_k^2\, \d y \d s,
          \end{align*}
          where $z_k$ is the constant in Burkholder's inequality. 
          Moreover, we apply Lemma \ref{prop:um} to obtain that for all $L\geq 1$, 
                    \begin{align*}
           \left\| \mathcal{S}_{L,t_2}-\mathcal{S}_{L,t_1}
		\right\|_k^2 &\leq \frac{z_k^2}{L} |t_2-t_1| \sup_{(s, y)\in [0, T]\times [0, L]} \|u(s\,, y)\|_k^2 \\
		&\leq \frac{c_{T, k}^2z_k^2}{L} |t_2-t_1| ,
		          \end{align*}
		          where $c_{T,k}$ is the constant in \eqref{eq:um1}. This proves \eqref{holder} in the case of Neumann/periodic boundary conditions.    
		          
		 \textbf{Case 2}: Dirichlet boundary conditions.  We write from \eqref{fubini} and Burkholder's inequality
		 \begin{align}\label{t_2-t_1}
		 \left\| \mathcal{S}_{L,t_2}-\mathcal{S}_{L,t_1}\right\|_k^2
		& \leq \frac{2z_k^2}{L^2}\Big( \int_{t_1}^{t_2}\int_0^L\mathcal{I}^2_0(t_2-s\,, y)\|u(s\,,y)\|_k^2\, \d y\d s
		\nonumber\\
		 & \qquad \qquad 
		 + \int_{0}^{t_1}\int_0^L(\mathcal{I}_0(t_2-s\,, y)- \mathcal{I}_0(t_1-s\,, y))^2\|u(s\,,y)\|_k^2\, \d y\d s
		 \Big)\nonumber\\
		 &  \leq \frac{2z_k^2c_{T,2}^2}{L^2}\Big( \int_{t_1}^{t_2}\int_0^L\mathcal{I}^2_0(t_2-s\,, y)\, \d y\d s\nonumber\\
		 &\qquad \qquad
		 + \int_{0}^{t_1}\int_0^L(\mathcal{I}_0(t_2-t_1+s\,, y)- \mathcal{I}_0(s\,, y))^2\, \d y\d s
		 \Big)\nonumber\\
		 &:= \frac{2z_k^2c_{T,2}^2}{L^2}(J_1 + J_2),
		 \end{align}
		 where we have used \eqref{eq:um2} in the second inequality.

		 By \eqref{sup}, 
		 \begin{align}\label{J_1}
		 J_1 \leq L|t_2-t_1|.
		 \end{align}
		 In order to estimate $J_2$, we appeal to the representation of Dirichlet heat kernel \eqref{heatkernel3} and write
		 \begin{align*}
		 \mathcal{I}_0(t_2-t_1+s\,, y)- \mathcal{I}_0(s\,, y) 
		 &= \frac{2}{L}\sum_{n=1}^{\infty} \sin(n\pi y/L)\int_0^L\sin(n\pi z/L) \d z 
		 \left[\e^{-\frac{n^2\pi^2 (t_2-t_1+s)}{2L^2}}-\e^{-\frac{n^2\pi^2 s}{2L^2}}\right]\\
		 &=2\sum_{n=1}^{\infty} \sin(n\pi y/L)\frac{1-\cos(n\pi)}{n\pi}
		 \left[\e^{-\frac{n^2\pi^2 (t_2-t_1+s)}{2L^2}}-\e^{-\frac{n^2\pi^2 s}{2L^2}}\right].
		 \end{align*}
		 Now, we apply the $L^2([0, L])$-orthogonality of the functions $y\mapsto \sin(n\pi y/L)$ to obtain that 
		 \begin{align}
		 J_2&= 4\int_0^{t_1}\d s\,
		 \sum_{n=1}^{\infty}\int_0^L \sin^2(n\pi y/L)\d y\, \left[ \frac{1-\cos(n\pi)}{n\pi}\right]^2
		 \left[\e^{-\frac{n^2\pi^2 (t_2-t_1+s)}{2L^2}}-\e^{-\frac{n^2\pi^2 s}{2L^2}}\right]^2 \nonumber\\
		 &= 2L
		 \sum_{n=1}^{\infty} \left[ \frac{1-\cos(n\pi)}{n\pi}\right]^2
		 \left[1-\e^{-\frac{n^2\pi^2 (t_2-t_1)}{2L^2}}\right]^2
		 \int_0^{t_1}\d s\, \e^{-\frac{n^2\pi^2 s}{L^2}}
		 \nonumber\\
		 &\leq 8L
		 \sum_{n=1}^{\infty} \frac{1}{n^2\pi^2}
		 \left[1\wedge \frac{n^2\pi^2 (t_2-t_1)}{2L^2}\right]^2
		  \frac{L^2}{n^2\pi^2}
		 \nonumber\\
		 &\leq \frac8L
		 \sum_{n=1}^{\infty}\left( \frac{L^4}{n^4\pi^4}
		 \wedge|t_2-t_1|^2\right), \nonumber
		 \end{align}
		           where, in the first inequality, we use the fact that $1-\e^{-x} \leq 1\wedge x$ for all $x\geq 0$.
                 Moreover, we have 
                 \begin{align}\label{J_2}
                 J_2&\leq \frac 8L \left(\sum_{n\leq |t_2-t_1|^{-1/2}L/\pi}|t_2-t_1|^2 
                 +\frac{L^4}{\pi^4}\sum_{n> |t_2-t_1|^{-1/2}L/\pi} \frac{1}{n^4}\right)\nonumber\\
                 &\leq \frac 8L 
                 \left(\frac L\pi
                  |t_2-t_1|^{3/2}   
                                 +\frac{L^4}{\pi^4}\int_{ |t_2-t_1|^{-1/2}L/(2\pi) }^{\infty}\frac{1}{y^4}\d y\right)\nonumber\\
                 &=\frac{88}{3\pi}|t_2-t_1|^{3/2}.
                 \end{align}
		 Therefore, we conclude from \eqref{t_2-t_1}, \eqref{J_1} and \eqref{J_2} that for all $k\geq 2$,
		 		 \begin{align*}
		 \left\| \mathcal{S}_{L,t_2}-\mathcal{S}_{L,t_1}\right\|_k^2
		 &\leq  \frac{2z_k^2c_{T,2}^2}{L^2}\left(L|t_2-t_1| + \frac{88}{3\pi}|t_2-t_1|^{3/2}\right),
		 \end{align*}
		 which implies \eqref{holder} in the case of Dirichlet boundary conditions. 
		 
		 The proof is complete. 
\end{proof}

We are now in a position to prove Theorem \ref{th:FCLT}.

\begin{proof}[Proof of Theorem \ref{th:FCLT}]
	The tightness of 
	$\{ \sqrt{L}\,\mathcal{S}_{L,\bullet} \}_{L\ge1 }$ in the space $C[0\,,T]$
	is a direct consequence of Proposition \ref{pr:tightness}. 
	 Therefore, according to  Billingsley \cite{Bil99},  it remains to prove that the finite-dimensional
	distributions of the process $t\mapsto\sqrt{L}\,\mathcal{S}_{L,t}$
	converge to those of $t\mapsto \int_0^t\sqrt{f(s)}\, \d B_s$ as $L\to\infty$, where $f$ is defined in \eqref{f}.
		
	Let us choose and fix  some $T>0$ and $m\ge 1$ points $t_1,\ldots,t_m\in(0\,,T]$.
	Proposition \ref{pr:Cov:asymp} ensures that, for every $i,j=1,\ldots,m$,
	\begin{equation}\label{C}
		\Cov\left( \mathcal{S}_{L,t_i}\,,\mathcal{S}_{L,t_j}\right) 
		\sim \frac{1}{L} \int_0^{t_i\wedge t_j}f(s) \, \d s\qquad\text{as $L\to\infty$}.
	\end{equation}
	Define the following quantities:
	\[
		F_i  :=  \frac{\mathcal{S}_{L,t_i}}{\sqrt{\Var(\mathcal{S}_{L,t_i})}} \quad \text{and} \quad C_{i,j} := \Cov( F_i\,,F_j)
		\qquad \text{for }i, j=1,\ldots,m.
	\]
		We will write $F:=(F_1\,,\ldots,F_m)$,
	and let $G=(G_1\,,\ldots,G_m)$ denote a centered Gaussian random vector
	with covariance matrix $C=(C_{i,j})_{1\le i,j\le m}$.
	
	Recall from \eqref{g+v} the random fields $v_{L,t_1},\ldots,v_{L,t_m}$, and define
	rescaled random fields $V_1,\ldots,V_m$ as follows:
	\[
		V_i := \frac{v_{L,t_i}}{\sqrt{\Var(\mathcal{S}_{L,t_i})}}\qquad 
		\text{for }i=1,\ldots,m.
	\]
	According to \eqref{S=delta(v)},
	$F_i =\delta( V_i)$ for all $i=1,\ldots,m$, and by duality, 
	$\E\<DF_i\,,V_j\>_\HH=\E[F_i\, \delta(V_j)] =C_{i,j}$ for all $i,j=1,\ldots,m$. Therefore,
	Proposition \ref{lemma: NP 6.1.2} implies that
	\[
		\left| \E h(F) - \E h(G) \right| \le\tfrac12 \|h''\|_\infty
		\sqrt{\sum_{i,j=1}^m \Var\< DF_i\,,V_j\>_\HH},
	\]
	for all $h\in C^2(\R^m)$.
	By Proposition \ref{Var<Ds,v>},
	\[
		\Var\<DF_i\,,V_j\>_\HH = 
		\frac{\Var\<D\mathcal{S}_{L,t_i}~,~v_{L,t_j}\>_\HH}{%
		\Var(\mathcal{S}_{L,t_i})\Var(\mathcal{S}_{L,t_j})}
		\le \frac{A_T}{L^3\Var(\mathcal{S}_{L,t_i})\Var(\mathcal{S}_{L,t_j})},
	\]
	which together with \eqref{C} implies that
	\begin{equation}\label{h(F)}
		\lim_{L\to\infty}\left| \E h(F) - \E h(G) \right| =0, \qquad \text{for all $h\in C^2(\R^m)$.} 
	\end{equation}
	
	On the other hand,  owing to \eqref{C}, 
	as $L\to\infty$,
	\begin{align*}
	C_{i,j} = \frac{\Cov\left( \mathcal{S}_{L,t_i}\,,\mathcal{S}_{L,t_j}\right) }{\sqrt{\Var(\mathcal{S}_{L,t_i})}\sqrt{\Var(\mathcal{S}_{L,t_j})}} \to \frac{\int_0^{t_i\wedge t_j} f(s)\, \d s}{\sqrt{\int_0^{t_i} f(s)\, \d s\times \int_0^{ t_j} f(s)\, \d s}},
	\end{align*}
	which yields that as $L\to \infty$, 
the random vector 	$G$ converges weakly to 
\begin{align}
\left(\frac{\int_0^{t_1}\sqrt{f(s)}\, \d B_s}{\sqrt{\int_0^{t_1}f(s) \, \d s}}, \ldots, \frac{\int_0^{t_m}\sqrt{f(s)}\, \d B_s}{\sqrt{\int_0^{t_m}f(s) \, \d s}}\right). \label{vector}
\end{align}

	Therefore, it follows from \eqref{h(F)} that $F$ converges weakly to the random vector in \eqref{vector}
	as $L\to\infty$. One more appeal to \eqref{C}
	shows that as $L\to\infty$,
	\[
		\sqrt{L}
		\left( \frac{\mathcal{S}_{L,t_1}}{\sqrt{\int_0^{t_1}f(s) \, \d s}}\,,\ldots,
		\frac{\mathcal{S}_{L,t_m}}{\sqrt{\int_0^{t_m}f(s) \, \d s}}\right)
		\rightarrow
\left(\frac{\int_0^{t_1}\sqrt{f(s)}\, \d B_s}{\sqrt{\int_0^{t_1}f(s) \, \d s}}, \ldots, \frac{\int_0^{t_m}\sqrt{f(s)}\, \d B_s}{\sqrt{\int_0^{t_m}f(s) \, \d s}}\right)
	\]
	in distribution.  This completes the proof.
\end{proof}

\appendix
\section{Appendix}

We include in this section a few properties of heat kernel with Neumann, Dirichlet or periodic boundary conditions that are used in this paper, some of which could also be found in \cite{Walsh} and \cite{BMS95}.  

Denote the heat kernel on $\R$ as
\begin{align}\label{heatkernel}
\bm{p}_{t}(x)= \frac{1}{\sqrt{2\pi t}}\e^{-\frac{x^2}{2t}}, \quad t>0, \,x\in \R.
\end{align}
Recall that we have the following formulas for the heat kernels on $[0, L]$: for all $t>0$ and $x, y \in [0, L]$, in the case of  Neumann boundary conditions ($\partial_xu(t\,,L)=\partial_xu(t\,,0)=0$),
\begin{align}\label{heatkernel1}
G_{t}(x, y) = \sum_{n\in \Z}\left( \bm{p}_{t}(x-y+2nL) + \bm{p}_{t}(x+y+2nL)\right),
\end{align}
or equivalently, 
\begin{align}\label{heatkernel4}
G_{t}(x, y) =L^{-1/2} + \frac{2}{L} \sum_{n=1}^{\infty} \cos(n\pi x/L)\cos(n\pi y/L)\e^{-\frac{n^2\pi^2t}{2L^2}};
\end{align}
in the case of Dirichlet boundary conditions ($u(t\,,L)=u(t\,,0)=0$),
\begin{align}\label{heatkernel2}
G_{t}(x, y) = \sum_{n\in \Z}\left( \bm{p}_{t}(x-y+2nL) - \bm{p}_{t}(x+y+2nL)\right), 
\end{align}
or equivalently, 
\begin{align}\label{heatkernel3}
G_{t}(x, y) = \frac{2}{L}\sum_{n=1}^{\infty} \sin(n\pi x/L)\sin(n\pi y/L)\e^{-\frac{n^2\pi^2 t}{2L^2}};
\end{align}
and in the case of periodic boundary conditions ($u(t\,,L)=u(t\,,0)$, $\partial_xu(t\,,L)=\partial_xu(t\,,0)$)
\begin{align}\label{Pheat}
G_{t}(x, y) = \sum_{n\in \Z} \bm{p}_{t}(x-y+nL).
\end{align}


\begin{lemma}\label{hkproperty}
         \begin{itemize}
         \item [(1)] Symmetry. $G_t(x, y)=G_t(y, x)$ for all $t>0$, $x, y\in [0, L]$.
         \item [(2)] In the case of Neumann and periodic heat kernel, for all $t >0$ and $x\in [0, L]$, 
         \begin{align}\label{int=1}
         \int_0^LG_t(x, y)\, \d y=1.
         \end{align}
         \item  [(3)] Semigroup property.  For all $t, s>0$ and $x, y \in [0, L]$, 
         \begin{align}\label{semigroup}
         \int_0^L G_t(x, z)G_s(z, y)\, \d z = G_{t+s}(x, y). 
         \end{align}
         \item [(4)] In the case of Neumann and Dirichlet heat kernel, for every $t>0$ and $x, y\in [0, L]$,
         \begin{align}\label{gaussian}
         G_t(x, y) \leq   \bm{p}_{t}(x-y) \left(4+  \frac{4}{1- \e^{-L^2/t}}\right).
         \end{align}
         As a consequence, for all $t\in (0, T]$, $L\geq 1$ and $x, y \in [0, L]$,
         \begin{align}\label{uniformgaussian}
         G_t(x, y) \leq   K_T\, \bm{p}_{t}(x-y),
                  \end{align}
         where $K_T= 4+  \frac{4}{1- \e^{-1/T}}$.
                  \end{itemize}
\end{lemma}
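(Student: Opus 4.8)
The plan is to read everything off the explicit series representations \eqref{heatkernel1}, \eqref{heatkernel2}, \eqref{Pheat} (and their spectral forms \eqref{heatkernel4}, \eqref{heatkernel3}), treating the four assertions in increasing order of difficulty.

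Parts (1) and (2) are essentially bookkeeping. For symmetry I would use that $\bm{p}_t$ is even, so in each image series swapping $x$ and $y$ and reindexing $n\mapsto -n$ leaves the sum unchanged: in \eqref{heatkernel1}--\eqref{heatkernel2} the terms $\bm{p}_t(x+y+2nL)$ are already symmetric, while $\bm{p}_t(x-y+2nL)\mapsto\bm{p}_t(y-x+2nL)=\bm{p}_t(x-y-2nL)$ is restored by $n\mapsto-n$; the periodic case \eqref{Pheat} is identical. For the mass identity \eqref{int=1} I would use a tiling argument: as $y$ ranges over $[0,L]$ and $n$ over $\Z$, the shifts $x-y+nL$ appearing in the periodic kernel sweep out all of $\R$ exactly once, so $\int_0^LG_t(x,y)\,\d y=\int_\R\bm{p}_t(w)\,\d w=1$; in the Neumann case the two families $x-y+2nL$ and $x+y+2nL$ together tile $\R$ (for each $n$ they cover $[x+(2n-1)L,x+(2n+1)L]$), giving the same conclusion.

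For the semigroup property (3) I would work with the spectral representation common to all three kernels, $G_t(x,y)=\sum_k\e^{-\lambda_k t}\varphi_k(x)\varphi_k(y)$ for an orthonormal family $\{\varphi_k\}$ in $L^2([0,L])$ (cosines for Neumann, sines for Dirichlet, complex exponentials for periodic) with eigenvalues $\lambda_k\asymp k^2$. Plugging two such expansions into $\int_0^LG_t(x,z)G_s(z,y)\,\d z$ and integrating term by term --- justified by the superexponential decay of $\e^{-\lambda_k t}$ --- orthonormality collapses the double sum to $\sum_k\e^{-\lambda_k(t+s)}\varphi_k(x)\varphi_k(y)=G_{t+s}(x,y)$. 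Alternatively, for the periodic kernel one can combine its single image sum with $\int_0^L$ to recover $\int_\R$ and invoke the Chapman--Kolmogorov identity for $\bm{p}_t$ on $\R$.

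The quantitative Gaussian bound (4) is the main point and the only place requiring real work. Writing $a:=x-y\in[-L,L]$, I would isolate the principal term $\bm{p}_t(a)=\bm{p}_t(x-y)$ (the $n=0$ term of the first image sum) and dominate every remaining image by a geometric series in $\e^{-L^2/t}$. The crux is controlling the cross term in the exponent: since $|a|\le L$, for $n\ge1$ one has
\[
  (x-y+2nL)^2-(x-y)^2=4nL\,a+4n^2L^2\ge 4n(n-1)L^2\ge 2(n-1)L^2,
\]
whence $\bm{p}_t(x-y+2nL)\le\bm{p}_t(x-y)\,\e^{-(n-1)L^2/t}$ and $\sum_{n\ge1}\bm{p}_t(x-y+2nL)\le\bm{p}_t(x-y)/(1-\e^{-L^2/t})$; the terms $n\le-1$ are handled identically after $n\mapsto-n$. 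For the reflected sum I would use the elementary comparisons $(x+y+2nL)^2\ge(x-y+2nL)^2$ for $n\ge0$ and $(x+y-2mL)^2\ge(x-y)^2$ for $m\ge1$ (both following from $x,y\ge0$), reducing it to the same geometric estimate. Summing the pieces yields \eqref{gaussian} with a constant of the claimed form $4+4/(1-\e^{-L^2/t})$ --- in fact this bookkeeping gives the slightly smaller $2+4/(1-\e^{-L^2/t})$. The Dirichlet kernel is dominated by its first image sum alone, since the subtracted sum in \eqref{heatkernel2} is non-negative, so the same bound applies. Finally \eqref{uniformgaussian} is immediate: for $L\ge1$ and $t\in(0,T]$ we have $L^2/t\ge1/T$, hence $1-\e^{-L^2/t}\ge1-\e^{-1/T}$, and monotonicity of the constant gives $K_T=4+4/(1-\e^{-1/T})$. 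The one genuine obstacle throughout is keeping the correct $\bm{p}_t(x-y)$ prefactor while summing the images; the bound $|x-y|\le L$ is exactly what makes the cross term harmless.
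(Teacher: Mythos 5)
Your proposal is correct and takes essentially the same route as the paper: the paper declares (1)--(3) obvious and proves only (4), exactly by your strategy of factoring out $\bm{p}_t(x-y)$ from each image term and bounding everything else by a geometric series in $\e^{-L^2/t}$ (the paper's bookkeeping singles out $n\in\{0,\pm1,\pm2\}$ and uses $\e^{-|n|L^2/t}$ decay for the remaining images, while yours uses $\e^{-(n-1)L^2/t}$; both yield a constant of the stated form, and the passage to \eqref{uniformgaussian} is identical). One point of precision: your stated comparison $(x+y-2mL)^2\ge(x-y)^2$ for $m\ge1$ only bounds each negative-index reflected term by $\bm{p}_t(x-y)$, which is not summable over $m$; for $m\ge2$ you need the strengthened form, e.g. $(x+y-2mL)^2-(x-y)^2\ge 4(m-1)^2L^2-L^2\ge 2(m-1)L^2$, or the comparison $(x+y-2mL)^2\ge\bigl(x-y-2(m-1)L\bigr)^2$ which maps these terms back into the main sum --- your final constant $2+4/(1-\e^{-L^2/t})$ is exactly what such a geometric tail produces, so this is a slip of exposition rather than of substance.
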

\begin{proof}
          The properties (1)-(3) are obvious and we need prove (4).  Since the Dirichlet heat kernel is less than the Neumann heat kernel (compare \eqref{heatkernel1} and \eqref{heatkernel2}), it suffices to prove  (4) for Neumann heat kernel. 
          
            For every $t>0$, $n\in \Z\setminus\{-1, 1\}$ and $x, y \in [0, L]$, we write
          \begin{align*}
           \bm{p}_{t}(x-y+2nL) &= \bm{p}_{t}(x-y)\, \e^{-\frac{4n^2L^2 + 4(x-y)nL}{2t}} \\
            &\leq  \bm{p}_{t}(x-y)\, \e^{-\frac{4n^2L^2 -4|n|L^2}{2t}} \leq   \bm{p}_{t}(x-y)\e^{-\frac{|n|L^2}{t}}.
          \end{align*}
          And for $n\in \{-1, 1\}$, we have $\bm{p}_{t}(x-y+2nL) \leq \bm{p}_{t}(x-y)$ for all $t>0$ and $x, y \in [0, L]$.
          Hence, for every $t>0$ and  $x, y \in [0, L]$,
           \begin{align}\label{first}
           \sum_{n\in \Z}\bm{p}_{t}(x-y+2nL) 
            &\leq    \bm{p}_{t}(x-y)\left(1+2\sum_{n=0}^{\infty}\e^{-\frac{nL^2}{t}} \right)=     \bm{p}_{t}(x-y)\left(1+ \frac{2}{1- \e^{-L^2/t}}\right).
          \end{align}
          Similarly, for  every $t>0$, $n\in \Z\setminus\{-1,1 ,-2, 2\}$ and $x, y \in [0, L]$, we have 
          \begin{align*}
           \bm{p}_{t}(x+y+2nL) &= \bm{p}_{t}(x-y)\,\e^{-\frac{4n^2L^2 + 4(x+y)nL + 4xy}{2t}} \\
            &\leq  \bm{p}_{t}(x-y)\,\e^{-\frac{4n^2L^2 -8|n|L^2}{2t}} \leq   \bm{p}_{t}(x-y)\e^{-\frac{|n|L^2}{t}},
          \end{align*}
          Clearly,  $\bm{p}_{t}(x+y+2nL) \leq \bm{p}_{t}(x-y)$ for $n\in\{1, -2, 2\}$ for all $t>0$ and $x, y \in [0, L]$.
          Moreover, we observe that 
          \begin{align*} 
          \bm{p}_{t}(x+y-2L) =   \bm{p}_{t}(x-y)\, \e^{-\frac{2(L-x)(L-y)}{t}} \leq    \bm{p}_{t}(x-y).
          \end{align*}
          The proceeding estimates together imply that for every $t>0$ and  $x, y \in [0, L]$,
          \begin{align}\label{second}
          \sum_{n\in \Z}\bm{p}_{t}(x+y+2nL) 
            &\leq     \bm{p}_{t}(x-y) \left(3+ 2\sum_{n=0}^{\infty}\e^{-\frac{nL^2}{t}} \right) \nonumber\\
            &=  \bm{p}_{t}(x-y) \left(3+  \frac{2}{1- \e^{-L^2/t}}\right).
          \end{align}
          Therefore, we combine \eqref{first} and \eqref{second} to obtain \eqref{gaussian}. 
          Finally, \eqref{uniformgaussian} is an immediate consequence of \eqref{gaussian}.
 \end{proof}

\begin{remark}\label{Dirichlet}
\begin{itemize}
\item [(1)]
There is not a uniform Gaussian lower  bound for the Dirichlet heat kernel. If there is a constant $C_T>0$ such that for all $t\in (0, T]$, $L\geq 1$ and $x, y\in [0, L]$
\begin{align*}
C_T\, \bm{p}_{t}(x-y) \leq  \sum_{n\in \Z}\left( \bm{p}_{t}(x-y+2nL) - \bm{p}_{t}(x+y+2nL)\right),
\end{align*}
then let $x=y=L$ in the above inequality to obtain 
\begin{align*}
C_T\, \bm{p}_{t}(0) \leq  \sum_{n\in \Z\setminus \{0\}} \bm{p}_{t}(2nL) - \sum_{n\in \Z\setminus\{-1\}}\bm{p}_{t}(2L+2nL).
\end{align*}
This gives a contradiction by letting $L\to\infty$.
\item [(2)]
The periodic heat kernel given by \eqref{Pheat}
does not have a uniform Gaussian upper bound as in \eqref{uniformgaussian}. To see this, suppose that there exists $C_T>0$ such that 
\begin{align*}
 \sum_{n\in \Z} \bm{p}_{t}(x-y+nL) \leq C_T\, \bm{p}_{t}(x-y), \quad\text{for all  $0< t\leq T$, $L\geq 1$ and for all$ \,x, y \in [0, L]$.}
\end{align*}
Letting $x=L$, $y=0$ and choosing $n=-1$, it leads to $\bm{p}_t(0)\leq C_T\, \bm{p}_t(L)$ for all $L\geq 1$, which gives a contradiction by letting $L\to \infty$.  However, the periodic heat kernel possesses the following sub-semigroup property. 

\end{itemize}
\end{remark}

\begin{lemma}\label{subsemi}
In the case of periodic heat kernel, for all $s, t\in (0, T]$, $L\geq 1$ and $x, y\in [0, L]$
         \begin{align}\label{squaresub}
         \int_0^LG_t^2(x, z)G_s^2(z, y)\, \d z \leq \sqrt{\frac{t+s}{4  \pi st }} \vartheta\left(\frac{1}{2T\pi}\right)\,G_{t+s}^2(x, y),
         \end{align}
         where $\vartheta$ denotes the theta function given by $\vartheta(r) = \sum_{n\in \Z}\e^{-\pi n^2 r}, r>0$.
\end{lemma}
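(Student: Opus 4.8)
The plan is to reduce the integral over $[0,L]$ to integrals over $\R$, where the elementary identity \eqref{element} can be applied directly, at the cost of a theta‑function factor that records the effect of periodization. The first ingredient is the Gaussian product identity $\bm{p}_s(w+n_1L)\bm{p}_s(w+n_2L)=\e^{-(n_1-n_2)^2L^2/(4s)}\,\bm{p}_s^2\bigl(w+\tfrac{(n_1+n_2)L}{2}\bigr)$, which follows from $A^2+B^2=\tfrac12[(A+B)^2+(A-B)^2]$. Applying it to the expansion \eqref{Pheat} of the periodic kernel and setting $d=n_1-n_2$, I would write
\[
G_s^2(z,y)=\sum_{d\in\Z}\e^{-\frac{d^2L^2}{4s}}\sum_{n\in\Z}\bm{p}_s^2\Bigl(z-y+nL+\tfrac{dL}{2}\Bigr).
\]
Since $z\mapsto G_t^2(x,z)$ is $L$‑periodic and all terms are nonnegative, the inner sum over $n$ unfolds the interval $[0,L]$ onto all of $\R$, so that Tonelli gives
\[
\int_0^LG_t^2(x,z)G_s^2(z,y)\,\d z=\sum_{d\in\Z}\e^{-\frac{d^2L^2}{4s}}\int_{\R}G_t^2(x,z)\,\bm{p}_s^2\Bigl(z-y+\tfrac{dL}{2}\Bigr)\,\d z.
\]

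Next I would expand $G_t^2(x,z)$ in exactly the same way and integrate against $\bm{p}_s^2$ over $\R$ using \eqref{element}, so that each term collapses to a translate of $\bm{p}_{t+s}^2$ (the identity $\bm{p}_t^2(x)=\tfrac{1}{\sqrt{4\pi t}}\bm{p}_{t/2}(x)$ from \eqref{identity} is convenient for recognizing the squared kernels). Collecting the result and indexing by the total shift $f$, I expect
\[
\int_0^LG_t^2\,G_s^2\,\d z=\sqrt{\tfrac{t+s}{4\pi ts}}\sum_{f\in\Z}W_f\sum_{m\in\Z}\bm{p}_{t+s}^2\Bigl(x-y+mL+\tfrac{fL}{2}\Bigr),\qquad W_f:=\sum_{e\in\Z}\e^{-\frac{(f-e)^2L^2}{4s}-\frac{e^2L^2}{4t}}.
\]
The target $G_{t+s}^2(x,y)$ admits precisely the same expansion with $W_f$ replaced by $\e^{-f^2L^2/(4(t+s))}$, and because every inner $m$‑sum is nonnegative it suffices to prove the pointwise coefficient bound $W_f\le\vartheta\!\bigl(\tfrac{1}{2T\pi}\bigr)\,\e^{-f^2L^2/(4(t+s))}$ for each $f\in\Z$.

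This last estimate is the decisive step and the place I expect the real work to sit. Completing the square in $e$ gives $W_f=\e^{-\frac{f^2L^2}{4(t+s)}}\,\Theta_f$, where $\Theta_f=\sum_{e\in\Z}\e^{-\beta(e-e_0)^2}$ with $\beta=\tfrac{(t+s)L^2}{4ts}$ and $e_0=\tfrac{ft}{t+s}$, so I must bound the shifted lattice Gaussian sum $\Theta_f$ uniformly in the shift $e_0$, in $s,t\in(0,T]$, and in $L\ge1$. By Poisson summation $\Theta_f=\sqrt{\pi/\beta}\sum_{k\in\Z}\e^{-\pi^2k^2/\beta}\cos(2\pi k e_0)$; since the Fourier coefficients are nonnegative and $\cos\le1$, the maximum over shifts is attained at integer $e_0$, whence $\Theta_f\le\sum_{e\in\Z}\e^{-\beta e^2}=\vartheta(\beta/\pi)$. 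Finally $\beta=\tfrac{(t+s)L^2}{4ts}\ge\tfrac1{2T}$, because $\tfrac1s+\tfrac1t\ge\tfrac2T$ and $L\ge1$, so the monotonicity of $\vartheta$ yields $\vartheta(\beta/\pi)\le\vartheta\!\bigl(\tfrac1{2T\pi}\bigr)$, which closes the argument. The main obstacle is thus twofold: recognizing that the combined exponential weight factors as the expected $\R^{3}$‑type kernel times the shift‑dependent sum $\Theta_f$, and then controlling $\Theta_f$ uniformly over all shifts by the single constant $\vartheta\!\bigl(\tfrac1{2T\pi}\bigr)$.
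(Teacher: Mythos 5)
Your proof is correct, and while it draws on the same raw materials as the paper's proof --- the Gaussian product identity (your completing-the-square identity is the paper's \eqref{E3} in disguise), unfolding the $[0,L]$ integral to $\R$ by periodicity, Poisson summation \eqref{poisson}, and monotonicity of $\vartheta$ --- it is organized around a genuinely different decomposition. The paper keeps products of \emph{distinct} translates, integrates via the plain semigroup property, and then uses a three-Gaussian ratio identity to isolate the lattice sum $\sum_j\bm{p}_{2ts/(t+s)}\bigl(jL-\tfrac{s}{t+s}mL\bigr)$ in \eqref{eq1}, which it bounds uniformly in $m$ by Poisson summation \emph{plus} the theta functional equation (steps \eqref{bound}--\eqref{bound1}), before reassembling the remaining double sum into $G_{t+s}^2(x,y)$. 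You instead symmetrize into squared kernels from the start, integrate with \eqref{element}, and expand both sides of \eqref{squaresub} in the common family of nonnegative functions $\sum_m\bm{p}_{t+s}^2\bigl(x-y+mL+\tfrac{fL}{2}\bigr)$, $f\in\Z$, reducing the lemma to the scalar coefficient bound $W_f\le\vartheta(1/(2T\pi))\,\e^{-f^2L^2/(4(t+s))}$, i.e.\ a discrete sub-semigroup inequality for lattice Gaussian weights. Your route buys two things: since the prefactor $\sqrt{(t+s)/(4\pi st)}$ is extracted up front by \eqref{element}, the residual sum $\Theta_f$ already sits on the spatial side with large parameter $\beta=\tfrac{(t+s)L^2}{4st}\ge\tfrac1{2T}$, so monotonicity of $\vartheta$ applies directly and the functional equation never needs to be invoked as a separate fact; and the reduction to a one-parameter inequality makes transparent why the constant is uniform in the shift $e_0$, in $s,t\in(0,T]$, and in $L\ge1$. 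What the paper's route buys is that $G_{t+s}^2(x,y)$ is reassembled directly, with no coefficient bookkeeping. The decisive step is the same in both arguments --- dominating a shifted lattice Gaussian sum by its unshifted value after Poisson summation (your ``maximum at integer shift'' observation plays exactly the role of the paper's bound $|\e^{-2\pi insm/(t+s)}|\le1$) --- and the two proofs yield the identical constant.
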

\begin{proof}
         According to the formula in \eqref{Pheat},
          \begin{align*}
          G_t^2(x, z)G_s^2(z, y)&=\sum_{m_1,m_2, n_1,n_2\in \Z} \bm{p}_t(x-z+m_1L) \bm{p}_t(x-z+m_2L) \bm{p}_s(z-y+n_1L) \bm{p}_s(z-y+n_2L)\\
          &=4\sum_{m_1,m_2, n_1,n_2\in \Z} \bm{p}_{2t}(2(x-z)+(m_1+m_2)L) \bm{p}_{2s}(2(z-y)+(n_1+n_2)L)\\
          &\qquad \qquad\qquad \qquad  \times \bm{p}_{2t}((m_1-m_2)L)\bm{p}_{2s}((n_1-n_2)L),
          \end{align*}
          where in the second equation we use the following elementary identity
          \begin{equation} \label{E3}
\pmb{p}_\sigma(x)  \pmb{p}_\sigma(y)= 2 \pmb{p}_{2\sigma}(x+y)  \pmb{p}_{2\sigma}(x-y) , \qquad \sigma>0, \, x,y \in \R,
\end{equation}
           Using change of variables ($m_1-m_2=k$, $n_1-n_2=j$) and the identity
                      \begin{align}\label{scale}
\bm{p}_t(\sigma x) = \sigma^{-1}\bm{p}_{t/\sigma^2}(x), \quad \text{for all $x \in \R$ and $t, \,\sigma>0$},
\end{align}
          it yields that 
                     \begin{align*}
          &G_t^2(x, z)G_s^2(z, y)\\
          &\quad=\sum_{k,m_2, j,n_2\in \Z} \bm{p}_{t/2}(x-z+(m_2 +k/2)L) \bm{p}_{s/2}(z-y+(n_2+j/2)L)
           \bm{p}_{2t}(kL)\bm{p}_{2s}(jL)\\
             &\quad=\sum_{k,\ell, j,n_2\in \Z} \bm{p}_{t/2}(x+(\ell +k/2)L- (z+n_2L)) \bm{p}_{s/2}(z+n_2L-y+(j/2)L)
           \bm{p}_{2t}(kL)\bm{p}_{2s}(jL),
          \end{align*}
where the last equality follows from change of variable ($m_2=\ell -n_2$). Therefore, 
\begin{align*}
        & \int_0^LG_t^2(x, z)G_s^2(z, y)\, \d z \\
        &=\sum_{k,\ell, j\in \Z}           \bm{p}_{2t}(kL)\bm{p}_{2s}(jL) \sum_{n_2\in \Z} \int_0^L\bm{p}_{t/2}(x+(\ell +k/2)L- (z+n_2L)) \bm{p}_{s/2}(z+n_2L-y+(j/2)L)\d z\\
        &=\sum_{k,\ell, j\in \Z}           \bm{p}_{2t}(kL)\bm{p}_{2s}(jL) \sum_{n_2\in \Z} \int_{n_2L}^{(n_2+1)L}\bm{p}_{t/2}(x+(\ell +k/2)L- z) \bm{p}_{s/2}(z-y+(j/2)L)\d z\\
        &=\sum_{k,\ell, j\in \Z}           \bm{p}_{2t}(kL)\bm{p}_{2s}(jL)  \int_\R\bm{p}_{t/2}(x+(\ell +k/2)L- z) \bm{p}_{s/2}(z-y+(j/2)L)\d z\\
        &=\sum_{k,\ell, j\in \Z}           \bm{p}_{2t}(kL)\bm{p}_{2s}(jL)  \bm{p}_{(t+s)/2}(x-y+\ell L +(k+j)L/2),
         \end{align*}
         thanks to the semigroup property of heat kernel. 
         
         Using again the identity \eqref{scale}, we obtain that 
         \begin{align*}
         \int_0^LG_t^2(x, z)G_s^2(z, y)\, \d z    &=2\sum_{k,\ell, j\in \Z}           \bm{p}_{2t}(kL)\bm{p}_{2s}(jL)  \bm{p}_{2(t+s)}(2(x-y)+2\ell L +(k+j)L)\\
         &=2\sum_{m,\ell, j\in \Z}    \bm{p}_{2(t+s)}(2(x-y)+2\ell L +mL)       \bm{p}_{2t}((m-j)L)\bm{p}_{2s}(jL)  \\
         &=\sum_{m,\ell, j\in \Z}    \bm{p}_{t+s}(x-y+\ell L) \bm{p}_{t+s}(x-y+\ell L +mL)  
         \frac{    \bm{p}_{2t}((m-j)L)\bm{p}_{2s}(jL)}{ \bm{p}_{2(t+s)}(mL)}
         \end{align*}
         where we use the identity \eqref{E3} in the third equality.  Now we apply the following identity 
         \begin{equation*}\label{PPPP}
	\frac{\bm{p}_{t-s}(a)\bm{p}_s(b)}{\bm{p}_t(a+b)} = 
	\bm{p}_{s(t-s)/t}\left( b - \frac st (a+b)\right)
	\quad\text{for all $0<s<t$ and $a,b\in\R$}.
\end{equation*}
in order to deduce  that 
         \begin{align}\label{eq1}
         &\int_0^LG_t^2(x, z)G_s^2(z, y)\, \d z   \nonumber\\
                   &\quad =\sum_{m,\ell, j\in \Z}    \bm{p}_{t+s}(x-y+\ell L) \bm{p}_{t+s}(x-y+\ell L +mL) 
         \bm{p}_{2ts/(t+s)}\left(jL - \frac{s}{t+s}mL\right). 
         \end{align}

By Poisson summation formula (see \cite[Theorem 3.1, Chap. 3]{Stein03}), we have
\begin{align}\label{poisson}
\sum_{j\in \Z}\bm{p}_{\sigma}(j + a) = \sum_{n\in \Z}\e^{-2\sigma \pi^2 n^2}\e^{2\pi i n a}, \quad \text{for all $\sigma >0$ and $a\in \R$.}
\end{align}
Hence, using \eqref{scale},
\begin{align}\label{bound}
\sum_{j\in \Z}\bm{p}_{2ts/(t+s)}\left(jL - \frac{s}{t+s}mL\right)&=\frac{1}{L}\sum_{j\in \Z}\bm{p}_{2ts/[(t+s)L^2]}\left(j- \frac{s}{t+s}m\right) \nonumber\\
&=\frac{1}{L} \sum_{n\in \Z}\e^{-\frac{4st \pi^2 n^2}{(t+s)L^2}}\e^{-\frac{2\pi i nsm}{t+s} }\nonumber\\
&\leq \frac{1}{L} \sum_{n\in \Z}\e^{-\frac{4st \pi^2 n^2}{(t+s)L^2}}= \frac{1}{L} \vartheta\left(\frac{4st\pi} {(t+s)L^2}\right),
\end{align}
where $\vartheta(r) = \sum_{n\in \Z}\e^{-\pi n^2 r}, r>0$ denotes the theta function.  
By \cite[Theorem 3.2, Chap. 3]{Stein03}, we have for all $s,t \in (0, T]$ and $L\geq1$,
\begin{align}\label{bound1}
 \frac{1}{L} \vartheta\left(\frac{4st\pi} {(t+s)L^2}\right)&=\sqrt{\frac{t+s}{4\pi st } }\, \vartheta\left(\frac{(t+s)L^2}{4st\pi} \right)
 \leq \sqrt{\frac{t+s}{4\pi st } }\, \vartheta\left(\frac{1}{2T\pi}\right),
\end{align}
where the inequality holds since the theta function is decreasing.

Finally, we combine \eqref{eq1}, \eqref{bound} and \eqref{bound1} to obtain 
\begin{align*}
         \int_0^LG_t^2(x, z)G_s^2(z, y)\, \d z&\leq \sqrt{\frac{t+s}{4\pi st }}  \vartheta\left(\frac{1}{2T\pi}\right)\sum_{m, \ell\in \Z}    \bm{p}_{t+s}(x-y+\ell L) \bm{p}_{t+s}(x-y+\ell L +mL) \\
         &= \sqrt{\frac{t+s}{4  \pi st }} \vartheta\left(\frac{1}{2T\pi}\right) \,G_{t+s}^2(x, y).
         \end{align*}
The proof is complete.          
\end{proof}

\begin{lemma}\label{average0}
          For all $t>0$,
          \begin{align}\label{eq:av}
          \lim_{L\to\infty}\frac{1}{L}\int_0^L\int_0^L G_t(x, y)\, \d x\d y =1.
          \end{align}
\end{lemma}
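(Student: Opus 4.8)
The plan is to treat the three boundary conditions separately, since the Neumann and periodic cases are immediate while the Dirichlet case requires a genuine computation.

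\emph{Neumann and periodic cases.} By Lemma \ref{hkproperty}(2) the heat kernel conserves mass, i.e.\ $\int_0^L G_t(x,y)\,\d y = 1$ for every $x\in[0,L]$ and $t>0$. Since the integrand is nonnegative, Fubini's theorem then gives
\[
	\frac1L\int_0^L\int_0^L G_t(x,y)\,\d x\,\d y = \frac1L\int_0^L\left(\int_0^L G_t(x,y)\,\d y\right)\d x = \frac1L\int_0^L 1\,\d x = 1
\]
for every $L\ge1$, so the limit is trivially equal to $1$ in these two cases.

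\emph{Dirichlet case.} Here mass is not conserved, and I would instead exploit the eigenfunction expansion \eqref{heatkernel3},
\[
	G_t(x,y) = \frac2L\sum_{n=1}^\infty \sin(n\pi x/L)\sin(n\pi y/L)\,\e^{-n^2\pi^2 t/(2L^2)}.
\]
For fixed $t>0$ and $L\ge1$ this series converges uniformly on $[0,L]^2$, because $|\sin(n\pi x/L)\sin(n\pi y/L)|\le1$ and $\sum_n \e^{-n^2\pi^2 t/(2L^2)}<\infty$, which legitimizes integrating term by term. Using $\int_0^L\sin(n\pi x/L)\,\d x = \frac{L}{n\pi}(1-(-1)^n)$, which vanishes for even $n$ and equals $\frac{2L}{n\pi}$ for odd $n$, the double integral collapses to a single sum over the odd indices:
\[
	\frac1L\int_0^L\int_0^L G_t(x,y)\,\d x\,\d y = \frac{2}{L^2}\sum_{n=1}^\infty\left(\int_0^L\sin(n\pi x/L)\,\d x\right)^2 \e^{-n^2\pi^2 t/(2L^2)} = \frac{8}{\pi^2}\sum_{n\text{ odd}}\frac1{n^2}\,\e^{-n^2\pi^2 t/(2L^2)}.
\]

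Finally, I would let $L\to\infty$ inside this last sum by dominated convergence, the summands being bounded by the summable sequence $n^{-2}$ and converging pointwise to $n^{-2}$; this produces $\frac{8}{\pi^2}\sum_{n\text{ odd}}n^{-2}$, and the classical identity $\sum_{n\text{ odd}}n^{-2}=\pi^2/8$ yields the value $1$. The only steps needing any care are in the Dirichlet case — the interchange of summation and integration and the evaluation of the odd-index sum — but both are routine, while the Neumann and periodic cases present no difficulty whatsoever.
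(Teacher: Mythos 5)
Your proposal is correct and follows essentially the same route as the paper: the Neumann/periodic cases are dispatched by mass conservation \eqref{int=1}, and the Dirichlet case uses the eigenfunction expansion \eqref{heatkernel3}, term-by-term integration reducing to the odd-index sum $\frac{8}{\pi^2}\sum_{n\ \mathrm{odd}} n^{-2}\e^{-n^2\pi^2 t/(2L^2)}$, and dominated convergence together with $\sum_{n\ \mathrm{odd}} n^{-2}=\pi^2/8$. The only difference is cosmetic (you explicitly justify the interchange of sum and integral via the Weierstrass test, which the paper leaves implicit).
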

\begin{proof}
          It is clear that \eqref{eq:av} holds for the heat kernel with Neumann and periodic boundary conditions; see \eqref{int=1}. Let us check it is 
          also true for Dirichlet heat kernel.  Recall the alternative representation of Dirichlet heat kernel in \eqref{heatkernel3}.  We write for all $t>0$,
          \begin{align*}
          \frac{1}{L}\int_0^L\int_0^L G_t(x, y)\, \d x\d y &=        
             \frac{2}{L^2}\sum_{n=1}^{\infty}
             \int_0^L\sin(n\pi x/L)\, \d x              \int_0^L\sin(n\pi y/L)\, \d y
           \,  \e^{-\frac{n^2\pi^2t}{2L^2}}\\
           &=        
             \frac{2}{\pi^2}\sum_{n=1}^{\infty}
           \frac{ (1-\cos(n\pi))^2}{n^2}
            \,  \e^{-\frac{n^2\pi^2t}{2L^2}}\\
           &  =           \frac{8}{\pi^2}\sum_{k=1}^{\infty}
           \frac{ 1}{(2k-1)^2}
            \,  \e^{-\frac{(2k-1)^2\pi^2t}{2L^2}}.
          \end{align*}
          By dominated convergence theorem, for all $t>0$,
                    \begin{align*}
          \lim_{L\to\infty}\frac{1}{L}\int_0^L\int_0^L G_t(x, y)\, \d x\d y 
                     &  =           \frac{8}{\pi^2}\sum_{k=1}^{\infty}
           \frac{ 1}{(2k-1)^2}
            \,  \lim_{L\to\infty}\e^{-\frac{(2k-1)^2\pi^2t}{2L^2}}\\
            &=           \frac{8}{\pi^2}\sum_{k=1}^{\infty}
           \frac{ 1}{(2k-1)^2}=1,
          \end{align*}
         where the last identity follows from the fact $\sum_{k=1}^{\infty}
           \frac{ 1}{(2k-1)^2}=  \frac{\pi^2}{8}$. This proves \eqref{eq:av}.
\end{proof}

\end{document}